\newcommand{\dist}{\operatorname{dist}}
\newcommand{\hess}{\operatorname{Hess}}
\newcommand{\ran}{\operatorname{Ran}}
\newcommand{\supp}{\operatorname{supp}}
\newcommand{\C}{\mathbb C}
\newcommand{\N}{\mathbb N}
\newcommand{\R}{\mathbb R}
\renewcommand{\S}{\mathbb S}
\newcommand{\diag}{\operatorname{diag}}
\newcommand{\bsigma}{\boldsymbol{\sigma}}
\def\<{\langle}
\def\>{\rangle}
\newcommand{\be}{\begin{equation}}
\newcommand{\ee}{\end{equation}}
\newcommand{\bes}{\begin{equation*}}
\newcommand{\ees}{\end{equation*}}
\numberwithin{equation}{section}
\numberwithin{figure}{section}
\newtheorem{theorem}{Theorem}[section]
\newtheorem{corollary}[theorem]{Corollary}
\newtheorem{lemma}[theorem]{Lemma}
\newtheorem{defin}[theorem]{Definition}
\newtheorem{prop}[theorem]{Proposition}
\newtheorem{example}[theorem]{Example}
\newtheorem{hyp}{Assumption}
\def\eee{{\mathcal E}}\def\fff{{\mathcal F}} \def\hhh{{\mathcal H}}
 \def\lll{{\mathcal L}}
 \def\ooo{{\mathcal O}}
\def\uuu{{\mathcal U}}
\begin{document}
\title{Metastable diffusions with degenerate drifts}

\author[M. Assal, J.-F. Bony, L. Michel]{Marouane Assal, Jean-Fran\c cois Bony and Laurent Michel}
\address{Marouane Assal, Departamento de Matem\'atica y Ciencia de la Computaci\'on, Universidad de Santiago de Chile, Las sophoras 173, Santiago, Chile.}
\email{marouane.assal@usach.cl}
\address{Jean-Fran\c{c}ois Bony, IMB, UMR CNRS 5251, Universit\'e de Bordeaux, 33405 Talence, France.}
\email{Jean-Francois.Bony@math.u-bordeaux.fr}
\address{Laurent Michel, IMB, UMR CNRS 5251, Universit\'e de Bordeaux, 33405 Talence, France.}
\email{laurent.michel@math.u-bordeaux.fr}

\begin{abstract}
We study the spectrum of the semiclassical Witten Laplacian $\Delta_{f}$ associated to a smooth function $f$ on $\R^d$. We assume that $f$ is a confining Morse--Bott function. Under this assumption we show that $\Delta_{f}$ admits exponentially small eigenvalues separated from the rest of the spectrum. Moreover, we establish Eyring-Kramers formula for these eigenvalues. Our approach is based on microlocal constructions of quasimodes near the critical submanifolds.
\end{abstract}

\maketitle

\tableofcontents 

\section{Introduction and main result} 

\subsection{Motivations}

The Witten Laplacian $\Delta_{f}$ associated to a smooth Morse function $f$ was introduced by Witten \cite{Wi82} to give an analytical proof of Morse inequalities. This operator appears also after unitary conjugation in the study of stochastic processes as the generator of overdamped Langevin dynamics associated to the drift $\nabla f$
\begin{equation} \label{e6}
d X_{t} = - 2 \nabla f ( X_{t} ) d t + \sqrt{2 h} d B_{t} ,
\end{equation}
where $X_{t}\in \R^{d}$ and $( B_{t} )_{t \geq 0}$ is a standard Brownian motion in $\R^{d}$. In this context, the semiclassical parameter is proportional to the temperature of the system and the study of the lowest eigenvalues of $\Delta_{f}$ gives crucial informations on the dynamic. In particular, the existence of exponentially small (with respect to $h^{- 1}$) eigenvalues of $\Delta_{f}$ explains the  metastable behavior in the low temperature regime. A detailed knowledge of the relevant time scales is also crucial in computational physics where ergodic Markov processes may be used to sample a target distributions and where many algorithms require a priori knowledge of the  metastable behavior \cite{Vo97,SoVo00}. We refer to \cite{LeRoSt10_01} for details on these topics.

The computation of the transition times of \eqref{e6} is a historical problem which at least goes back to Kramers \cite{Kr40}.
In the case of Morse functions, a  first rigorous study of the low eigenvalues of $\Delta_{f}$  was performed by Helffer and Sj\"ostrand \cite{HeSj85_01} who showed the correspondence between critical points of index $p$ of $f$ and exponentially small eigenvalues of the Witten Laplacian acting on $p$-forms. This approach was generalized to Morse--Bott inequalities in \cite{He88_02,HeSj88_01}. Later on, the first accurate computation of the exponential rate (Arrhenius law) and asymptotic expansion of the prefactor was done by Bovier, Gayrard and Klein \cite{BoGaKl05_01} by a probabilist approach and Helffer, Klein and Nier \cite{HeKlNi04_01} by semiclassical methods. More recently, Le Peutrec, Nier and Viterbo \cite{LePNiVi21} proved Arrhenius law for Lipschitz functions $f$ admitting a finite number of critical values.

In a more general framework, the study of the asymptotic behavior of the eigenvalues of Schr\"odinger operators of the form $P = - h^{2} \Delta + V ( x )$, in the semiclassical limit $h \to 0$, has a long history and has been the subject of several investigations from basis of quantum mechanics to microlocal analysis. Precise spectral asymptotics on the bottom of the spectrum has been proved for a large class of smooth real-valued potentials using the WKB method and harmonic approximations (we refer to \cite{DiSj99_01} for a detailed account). Under suitable assumptions, the low-lying eigenvalues are localized near the absolute minima of the potential $V$ and precise results on the splitting between eigenvalues can be obtained under additional geometric assumptions \cite{HeSj84_01,HeSj85_00}. At a first sight the analysis of the Witten Laplacian $\Delta_{f}$ associated to a Morse function $f$
requires even more sophisticated techniques, since it  presents non-resonant wells in the sense of \cite{HeSj85_02}. However it is possible to avoid the machinery  of  \cite{HeSj85_02} by using the existence of an explicit element in the kernel of $\Delta_{f}$ given by the Gibbs state $e^{- f / h}$. In \cite{HeKlNi04_01}, this is done by using additional supersymmetry properties and local analysis of the Witten Laplacian on $1$-forms. More recently,  a general construction of quasimodes based on Gaussian cut-off of the Gibbs state was developed in \cite{BoLePMi22} to study general Fokker--Planck operators.

In the present paper, we consider the case where the critical points of the function $f$ are made of smooth compact manifolds. 
This can be seen as an intermediate situation between the case of Morse function and the fully degenerate case of \cite{LePNiVi21}.
One of the motivations to work with submanifold critical sets comes also from physical context where symmetries in the problem yield such degenerate situations (see \cite{HeSj85_00} for operators invariant under a finite group of isometries). In particular, we provide the complete asymptotic of the small eigenvalues for radial functions $f$.

\subsection{Framework and first localization result}

Let $f : \R^{d} \rightarrow \R$ with $d \geq 1$ be a smooth function. We consider the associated semiclassical Witten Laplacian
\begin{equation} \label{e7}
\Delta_{f} = - h^{2} \Delta + \vert \nabla f \vert^{2} - h \Delta f ,
\end{equation}
where $h \in ] 0 , 1 ]$ denotes the semiclassical parameter. Throughout the paper, we assume that $f \in C^{\infty} ( \R^{d} ; \R )$ satisfies the following confining assumption.

\begin{hyp}\sl \label{hyp0}
There exist $C > 0$ and a compact set $K \subset \R^{d}$ such that
\begin{equation*}
f ( x ) \geq - C , \qquad \vert \nabla f ( x ) \vert \geq \frac{1}{C} \qquad \text{and} \qquad \vert \hess f ( x ) \vert \leq C \vert \nabla f ( x ) \vert^{2} ,
\end{equation*}
for all $x \in \R^{d} \setminus K$.
\end{hyp}

\noindent
Let us observe that, under Assumption \ref{hyp0}, there exist $C > 0$ and a compact set $L$ such that 
\begin{equation} \label{e8}
\forall x \in \R^{d} \setminus L , \qquad f ( x ) \geq C \vert x \vert ,
\end{equation}
see for example \cite[Lemma~3.14]{MeSc14} for a proof. Under this assumption, $\Delta_{f}$ is essentially self-adjoint on $C_{0}^{\infty} ( \R^{d} )$. By definition, $\Delta_{f}$ has a square structure
\begin{equation} \label{NN}
\Delta_{f} = d_{f}^{*} \circ d_{f} \qquad \text{with} \qquad d_{f} = e^{- f / h} \circ h \nabla \circ e^{f / h} ,
\end{equation}
which implies that $\Delta_{f}$ is non-negative and hence $\sigma ( \Delta_{f} ) \subset [ 0 , + \infty [$. Moreover, it follows from Assumption \ref{hyp0} that there exists $c_{0} , h_{0} > 0$ such that, for all $h \in ] 0 , h_{0} ]$,
\begin{equation} \label{ES}
\sigma_{\rm ess} ( \Delta_{f} ) \subset [ c_{0} , + \infty [ ,
\end{equation}
and hence $\sigma ( \Delta_{f} ) \cap [ 0 , c_{0} [$ is made of $h$-dependent discrete eigenvalues with no accumulation point excepted maybe $c_{0}$. In addition, \eqref{e8} gives that $e^{- f / h}$ belongs to the domain of $\Delta_{f}$ for all $h \in ] 0 , h_{0}]$, which implies thanks to \eqref{NN} that $0$ is a simple eigenvalue of $\Delta_{f}$.

The aim of this work is to describe the small eigenvalues of $\Delta_{f}$ in the degenerate case where $f$ is of Morse--Bott type. More precisely, throughout this paper we assume the following condition

\begin{hyp}\sl \label{hyp1}
The set of critical points of $f$ is a finite disjoint union of boundaryless compact connected submanifolds $\Gamma$ of $\R^{d}$ such that the transversal Hessian of $f$ at any point of $\Gamma$ is non degenerate. From now, we will denote by $\uuu$ the set of submanifolds $\Gamma$ as above and for any $\Gamma \in \uuu$ we denote $d_{\Gamma}$ its dimension.
\end{hyp}

Let us recall the celebrated Morse--Bott Lemma (see \cite{BaHu04} for a proof).

\begin{lemma}\sl \label{a36}
Assume that $f$ satisfies Assumption \ref{hyp1} and let $\Gamma \in \uuu$. Around any point of $\Gamma$, there exist local coordinates $( y_{t} , y_{-} , y_{+} )$ with $y_{t} \in \R^{d_{\Gamma}}$ and $( y_{-} , y_{+})\in \R^{d - d_{\Gamma}}$ such that
\begin{equation} \label{a3}
f ( y ) = - \vert y_{-} \vert^{2} + \vert y_{+} \vert^{2} .
\end{equation}
In particular, the signature of $\hess f$ is constant on $\Gamma$.
\end{lemma}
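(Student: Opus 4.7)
The plan is to work in a tubular neighborhood of $\Gamma$ near a fixed base point $p \in \Gamma$ and to reduce $f$ to its quadratic normal form by a parametrized version of the Morse lemma combined with Moser's path method. First I would pick, in a neighborhood of $p$, coordinates $(y_{t}, z)$ with $y_{t} \in \R^{d_{\Gamma}}$ parametrizing $\Gamma$ locally and $z \in \R^{d - d_{\Gamma}}$ transverse coordinates so that $\Gamma$ corresponds to $\{ z = 0 \}$. Since every point of $\Gamma$ is critical and $\Gamma$ is connected, $f$ is constant along $\Gamma$, so subtracting that constant we may assume $f(y_{t}, 0) = 0$; in particular $\partial_{y_{t}} f$ and $\partial_{z} f$ both vanish on $\{ z = 0 \}$.

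Next, Taylor's formula with integral remainder in the $z$ variable gives
\begin{equation*}
f(y_{t}, z) = \langle A(y_{t}, z) z, z \rangle, \qquad A(y_{t}, z) = \int_{0}^{1} (1 - s) \, \hess_{zz} f(y_{t}, s z) \, ds ,
\end{equation*}
where $A$ is smooth, symmetric matrix valued, with $A(y_{t}, 0) = \tfrac{1}{2} \hess_{zz} f(y_{t}, 0) =: \tfrac{1}{2} H(y_{t})$, non-degenerate by Assumption \ref{hyp1}. I would then interpolate
\begin{equation*}
g_{s}(y_{t}, z) = \bigl\langle \bigl[ (1 - s) A(y_{t}, z) + s A(y_{t}, 0) \bigr] z, z \bigr\rangle ,
\end{equation*}
so that $g_{0} = f$ and $g_{1} = \tfrac{1}{2} \langle H(y_{t}) z, z \rangle$, and apply Moser's trick to produce a smooth family of fiber preserving diffeomorphisms $\psi_{s} : (y_{t}, z) \mapsto (y_{t}, \Psi_{s}(y_{t}, z))$ with $\Psi_{s}(y_{t}, 0) = 0$ and $g_{s} \circ \psi_{s} = g_{0}$. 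This reduces $f$ to the pure quadratic $\tfrac{1}{2} \langle H(y_{t}) z, z \rangle$.

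To finish, since $y_{t} \mapsto H(y_{t})$ is a smooth family of invertible symmetric matrices, its signature is locally constant; by a smooth family of linear changes of the $z$ variable, built for instance from the spectral projectors onto the positive and negative subspaces of $H(y_{t})$ followed by rescaling by the square roots of the absolute values of the eigenvalues, one obtains new transverse coordinates $(y_{-}, y_{+})$ in which $f = - \vert y_{-} \vert^{2} + \vert y_{+} \vert^{2}$. Constancy of the signature on all of $\Gamma$ (hence the final statement of the lemma) is then immediate from connectedness together with the non-vanishing of the eigenvalues of $H$ along $\Gamma$.

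The main obstacle is the parametrized Moser step: one must solve the homological equation $\partial_{s} g_{s} + d g_{s} \cdot X_{s} = 0$ with $X_{s}$ a smooth vector field tangent to the fibers $\{ y_{t} = \text{const.} \}$ and vanishing at $z = 0$, so that its flow preserves $\Gamma$ pointwise. Writing $\partial_{s} g_{s} = \langle (A(y_{t}, 0) - A(y_{t}, z)) z, z \rangle$, a Hadamard type factorization uses $A(y_{t}, z) - A(y_{t}, 0) = O( \vert z \vert )$ to extract one power of $z$ and reduces the equation to inverting, at each $(y_{t}, z)$, a non-degenerate linear system governed by $H(y_{t})$, which yields the required smooth $X_{s}$ vanishing at $z = 0$.
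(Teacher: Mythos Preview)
The paper does not give its own proof of this lemma: it simply recalls the statement as the classical Morse--Bott Lemma and refers to \cite{BaHu04} for a proof. Your sketch, via Taylor expansion in the transverse variable followed by the Palais--Moser path method with parameters in $y_{t}$, is precisely one of the standard arguments (and indeed one of the proofs given in that reference), so there is no substantive comparison to make at the level of strategy.

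Your argument is essentially correct. Two small points worth tightening. First, the interpolation $g_{s}=\langle[(1-s)A(y_{t},z)+sA(y_{t},0)]z,z\rangle$ is nothing but $(1-s)f+s\,\tfrac{1}{2}\langle H(y_{t})z,z\rangle$, so the homological equation is the usual one $df_{s}\cdot X_{s}=f-\tfrac{1}{2}\langle Hz,z\rangle$; writing $\partial_{z_{k}}f_{s}=(M_{s}(y_{t},z)z)_{k}$ with $M_{s}(y_{t},0)=H(y_{t})$ invertible, and using Hadamard on the $O(|z|^{3})$ right-hand side, one solves directly for a smooth $X_{s}=O(|z|^{2})$, which makes the fiber preservation and the vanishing on $\{z=0\}$ transparent. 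Second, in the final diagonalization your phrase ``rescaling by the square roots of the absolute values of the eigenvalues'' can be misread as requiring a smooth eigen\-decomposition of $H(y_{t})$, which may fail at eigenvalue crossings; what works cleanly is to set $z'=|H(y_{t})|^{1/2}z$ (smooth since $|H|$ is positive definite), so that the form becomes $\langle(P_{+}(y_{t})-P_{-}(y_{t}))z',z'\rangle$, and then locally trivialize the smooth subbundles $E_{\pm}(y_{t})=\operatorname{Ran}P_{\pm}(y_{t})$ to obtain the coordinates $(y_{-},y_{+})$.
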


\noindent
Set
\begin{equation*}
\mathcal{U} = \bigcup_{j = 0}^{d} \uuu^{( j )} , \qquad \uuu^{( j )} : = \big\{ \Gamma \in \mathcal{U} ; \ \hess ( f )_{\vert \Gamma} \text{ has } j \text{ negative eigenvalues} \big\} ,
\end{equation*}
and, for $j = 0 , 1 , \ldots , d$, let $n_{j}$ be the cardinal of the set $\uuu^{( j )}$. Elements of $\uuu^{( 0 )}$ will be called minimal submanifolds and those of $\uuu^{( 1 )}$ will be called saddle submanifolds. Similarly to the Morse case, only the minimal manifolds can create small eigenvalues, and we have the following first localization result.

\begin{theorem}\sl \label{e14}
Let Assumptions \ref{hyp0} and \ref{hyp1} hold. There exist $\eta_{0} , h_{0} > 0$ such that, for all $h \in ] 0 , h_{0} ]$, $\Delta_{f}$ admits exactly $n_{0}$ eigenvalues in $[ 0 , \eta_{0} h^{2}]$ counting multiplicities, denoted $0 = \lambda_{1} ( h ) < \lambda_{2} ( h ) \leq \cdots \leq \lambda_{n_{0}} ( h )$. Furthermore, there exists a constant $c > 0$ such that, for all $j \in \{ 1 , \ldots , n_{0} \}$, one has 
\begin{equation*}
\lambda_{j} ( h ) = \mathcal{O} ( e^{- c / h} ) ,
\end{equation*}
uniformly for $h \in ] 0 , h_{0} ]$.
\end{theorem}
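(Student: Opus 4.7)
The plan is to combine an upper bound on the $n_0$-th eigenvalue obtained from explicit quasimodes with a lower bound on the $(n_0+1)$-st eigenvalue obtained via an IMS-type localization combined with harmonic approximation near each critical submanifold. For the upper bound, I would associate to each minimal submanifold $\Gamma \in \uuu^{(0)}$ a cut-off $\chi_\Gamma \in C_c^\infty(\R^d;[0,1])$ equal to $1$ on a small tubular neighborhood of $\Gamma$ and supported in a slightly larger one, disjoint from every other critical submanifold, and then take the quasimode $u_\Gamma = \chi_\Gamma\, e^{-f/h}$. Since $d_f e^{-f/h} = 0$, one has $d_f u_\Gamma = h (\nabla \chi_\Gamma) e^{-f/h}$ and
$$\langle \Delta_f u_\Gamma , u_\Gamma \rangle = \| d_f u_\Gamma \|^{2} = h^{2} \int |\nabla \chi_\Gamma|^{2} e^{-2 f / h}\, dx = \mathcal{O}\bigl( e^{- 2 ( \min_\Gamma f + \delta ) / h} \bigr)$$
for some $\delta > 0$, because $\nabla \chi_\Gamma$ is supported where $f \geq \min_\Gamma f + 2\delta$. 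Meanwhile, Lemma \ref{a36} and Laplace's method in Fermi coordinates around $\Gamma$ give $\| u_\Gamma \|^{2} \geq c\, h^{(d - d_\Gamma)/2} e^{-2 \min_\Gamma f / h}$, so the Rayleigh quotient of $u_\Gamma$ is $\mathcal{O}(e^{-c/h})$. As the $(u_\Gamma)_{\Gamma \in \uuu^{(0)}}$ have pairwise disjoint supports, they span an $n_0$-dimensional subspace, and the min-max principle yields $n_0$ eigenvalues of size $\mathcal{O}(e^{-c/h})$.

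For the lower bound, I would fix a partition of unity $\{\chi_\Gamma\}_{\Gamma \in \uuu} \cup \{\chi_\infty\}$ of $\R^d$ with $\sum \chi^{2} = 1$, each $\chi_\Gamma$ supported in a small tubular neighborhood of $\Gamma$ and $\chi_\infty$ supported away from every critical submanifold. Using the cancellation $\sum_j \chi_j \nabla \chi_j = 0$, one derives the IMS-type identity
$$\| d_f u \|^{2} = \sum_{\Gamma} \| d_f ( \chi_\Gamma u ) \|^{2} + \| d_f ( \chi_\infty u ) \|^{2} - h^{2} \int \Bigl( \sum_{\Gamma} |\nabla \chi_\Gamma|^{2} + |\nabla \chi_\infty|^{2} \Bigr) |u|^{2}\, dx .$$
On $\supp \chi_\infty$, Assumption \ref{hyp0} and compactness give $|\nabla f|^{2} \geq c_{1} > 0$, whence $\| d_f ( \chi_\infty u ) \|^{2} \geq ( c_{1} / 2 ) \| \chi_\infty u \|^{2}$ for $h$ small. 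Near any critical submanifold $\Gamma \in \uuu^{(j)}$, the Morse--Bott normal form and a tubular-neighborhood parametrization allow one to compare the localized operator with its harmonic model on the normal bundle of $\Gamma$: the transversal model is, modulo lower-order errors, a shifted harmonic oscillator with ground state $4hj$ in the unstable directions. Hence $\| d_f ( \chi_\Gamma u ) \|^{2} \geq c h \| \chi_\Gamma u \|^{2}$ when $j \geq 1$, while for $j = 0$ the model has a simple zero eigenvalue and a spectral gap of size at least $\eta_0 h^{2}$ (governed by the first positive eigenvalue of the Laplace--Beltrami operator on the compact manifold $\Gamma$, together with the next excited state of the transversal oscillator). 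A standard min-max counting argument then gives that $\Delta_f$ has at most $n_0$ eigenvalues in $[0, \eta_0 h^{2}]$.

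Combining the two bounds produces exactly $n_0$ eigenvalues in $[0, \eta_0 h^{2}]$, each of order $\mathcal{O}(e^{-c/h})$. The main technical obstacle is the harmonic approximation along the compact critical submanifolds: Lemma \ref{a36} only provides pointwise normal coordinates, so one must globalize these into Fermi coordinates over an entire tubular neighborhood of $\Gamma$, express the localized Witten Laplacian as a model operator on sections of the normal bundle, and carefully control the error arising both from the tangential derivatives (coupling to the Laplace--Beltrami operator on $\Gamma$) and from the variation of the transversal Hessian along $\Gamma$. This is the step that will require the most delicate book-keeping; once it is carried out, the two bounds fit together cleanly.
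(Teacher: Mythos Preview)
Your strategy is sound and parallels the paper's in outline, but the paper takes a different, more economical route: it invokes the Helffer--Sj\"ostrand machinery as black boxes rather than redoing the analysis by hand. Concretely, near each $\Gamma\in\uuu$ the paper passes to the Dirichlet realization $P_{M_\Gamma}$ and then (i) for $\Gamma\in\uuu^{(0)}$ cites the miniwell result \cite[Theorem~2.3]{HeSj87_01} to get exactly one eigenvalue below $\eta_\Gamma h^2$, (ii) for $\Gamma\notin\uuu^{(0)}$ uses the Melin--H\"ormander inequality \cite[Proposition~2.1]{HeSj86_01} to get $P_{M_\Gamma}\ge h T/2$, and (iii) glues the local pictures to the global one via the exponential decoupling of \cite[Theorem~2.4]{HeSj84_01}. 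Your harmonic-approximation program is essentially a proposal to reprove (i) and (ii) in Fermi coordinates and to replace (iii) by IMS; this is legitimate but considerably longer, and the ``delicate book-keeping'' you flag is exactly the content of \cite{HeSj87_01}.

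There is, however, one point where your sketch does not close as written. The IMS error term is $h^2\int\bigl(\sum_\Gamma|\nabla\chi_\Gamma|^2+|\nabla\chi_\infty|^2\bigr)|u|^2\le C h^2\|u\|^2$, with $C$ determined by the geometry of the partition (hence by the mutual distances between the critical submanifolds) and \emph{not} freely tunable. The local spectral gap near a minimal submanifold $\Gamma\in\uuu^{(0)}$ is $\eta_\Gamma h^2$ with $\eta_\Gamma$ governed by the first positive Laplace--Beltrami eigenvalue of $\Gamma$; there is no a~priori reason that $\min_\Gamma\eta_\Gamma>C$, so the naive min-max comparison only yields at most $n_0$ eigenvalues below $(\eta_\Gamma-C)h^2$, which may be negative. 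To rescue the argument you need one extra ingredient: either observe that the IMS error is supported where $|\nabla f|\ge c>0$ and absorb it into the potential (which upgrades the effective error from $Ch^2$ to $O(h^3)$), or else use Agmon decay of low-lying eigenfunctions to make the localization error exponentially small. The paper avoids the issue entirely by invoking the exponential (not merely polynomial) decoupling of \cite{HeSj84_01}.
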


The proof of this result, based on the Helffer--Sj\"{o}strand theory of quantum wells \cite{HeSj84_01,HeSj86_01,HeSj87_01}, can be found in Section \ref{s2}. The aim of our paper is to give a precise description of the small eigenvalues $\lambda_{j} ( h )$, $j = 2 , \ldots , n_{0}$. More precisely, one aims to prove asymptotics of the form $\lambda_{j} ( h ) \sim a_{j} ( h ) e^{- 2 S_{j} / h}$ for some positive constants $S_{j}$ and some prefactors $a_{j}$ admitting an expansion in powers of $h$. Such asymptotics are often called Eyring--Kramers formula. In order to prove it, the first main difficulty is to identify the relevant energy barriers $S_{j}$. For this purpose, one needs to label the critical manifolds in a suitable way. This is the object of the next section.

\subsection{Separating saddle manifolds and labeling procedure} \label{s13}

For any $\sigma \in \R \cup \{ \infty \}$, let $X_{\sigma} = \{ x \in \R^{d} ; \ f ( x ) < \sigma \}$. Then $X_{\infty} = \R^{d}$ and, as soon as $n_{0} \geq 2$, there exists $\sigma \in \R$ such that $X_{\sigma}$ has at least two connected components. We now describe the structure of $X_{\sigma}$ near an element $\Gamma$ of $\uuu$ with $\sigma = f ( \Gamma )$. In the sequel, for $x_{0} \in \R^{d}$ and $r > 0$, $B ( x_{0} , r )$ stands for the open ball centered at $x_{0}$ and of radius $r$.

\begin{prop}\sl \label{a23}
Let $f$ satisfies Assumption \ref{hyp1} and denote $\sigma = f ( \Gamma )$ for $\Gamma\in \mathcal{U}$.

$i)$ For all $\Gamma \in \uuu^{( j )}$ with $j \geq 2$ and $r > 0$ small enough, the set $X_{\sigma} \cap ( \Gamma + B ( 0 , r ) )$ is connected.

$ii)$ For $\Gamma \in \uuu^{( 1 )}$, one of the following assertion holds    \setlength{\parskip}{0in}
\begin{itemize}
\item[$(a)$] either, for all $r > 0$ small enough, the set $X_{\sigma} \cap ( \Gamma + B ( 0 , r ) )$ is connected, \setlength{\parskip}{0.05in}

\item[$(b)$] or, for all $r > 0$ small enough, the set $X_{\sigma} \cap ( \Gamma + B ( 0 , r ) )$ has exactly two disjoint connected components $A_{+} ( r )$ and $A_{-} ( r )$. In that case, $\Gamma \subset \overline{A_{+} ( r )} \cap \overline{A_{-} ( r )} $.
\end{itemize}
\end{prop}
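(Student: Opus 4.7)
The strategy is to replace the pointwise Morse--Bott normal form of Lemma~\ref{a36} by a uniform version in a tubular neighborhood of $\Gamma$, and then to reduce the topology of $X_\sigma$ near $\Gamma$ to that of a suitable vector bundle over $\Gamma$. More precisely, since the transverse Hessian of $f$ is non-degenerate along $\Gamma$ and of constant signature, the normal bundle $N \Gamma$ of $\Gamma$ in $\R^d$ splits smoothly as $N_- \Gamma \oplus N_+ \Gamma$, where $N_\pm \Gamma$ are the sub-bundles of rank $j$ and $d - d_\Gamma - j$ spanned respectively by the negative and positive eigenvectors of the transverse Hessian. After endowing $N_\pm \Gamma$ with smooth fiber metrics $g_\pm$, the parametric Morse lemma (see~\cite{BaHu04}) provides a diffeomorphism $\Phi$ from a tubular neighborhood $\Omega$ of $\Gamma$ onto a neighborhood of the zero section in $N \Gamma$, with $\Phi_{\vert \Gamma} = \Id$ and
$$
f \circ \Phi^{-1} ( \gamma , v_- , v_+ ) = \sigma - \Vert v_- \Vert_{g_-}^{2} + \Vert v_+ \Vert_{g_+}^{2} .
$$
In particular, for every sufficiently small $r > 0$, $\Phi$ identifies $X_\sigma \cap ( \Gamma + B ( 0 , r ) )$ with an open neighborhood of $\Gamma$ inside $\{ \Vert v_+ \Vert_{g_+} < \Vert v_- \Vert_{g_-} \}$.

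The linear deformation $( \gamma , v_- , v_+ ) \mapsto ( \gamma , v_- , t v_+ )$, $t \in [ 0 , 1 ]$, remains inside $\{ \Vert v_+ \Vert_{g_+} < \Vert v_- \Vert_{g_-} \}$ and thus gives a deformation retraction of this set onto an open neighborhood of the zero section in $N_- \Gamma$ minus that zero section. Counting the connected components of $X_\sigma \cap ( \Gamma + B ( 0 , r ) )$ therefore amounts to counting the components of $N_- \Gamma \setminus \Gamma$. In case $(i)$, $j \geq 2$, so each fiber of $N_- \Gamma$ minus the origin is $\R^{j} \setminus \{ 0 \}$, which is path-connected; combined with the path-connectedness of $\Gamma$, this shows that $N_- \Gamma \setminus \Gamma$ is itself path-connected, which yields the claim.

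For $(ii)$, $N_- \Gamma$ is a smooth real line bundle over the compact connected manifold $\Gamma$, and such a bundle is either trivial or non-orientable. If $N_- \Gamma$ is non-orientable, picking a loop in $\Gamma$ along which the orientation is reversed yields, by lifting to $N_- \Gamma$, a path in $N_- \Gamma \setminus \Gamma$ joining any non-zero vector $v$ to $- v$; hence $N_- \Gamma \setminus \Gamma$ is path-connected, which corresponds to case $(a)$. If $N_- \Gamma$ is trivial, then $N_- \Gamma \simeq \Gamma \times \R$, the complement of the zero section is the disjoint union of $\{ v_- > 0 \}$ and $\{ v_- < 0 \}$, and pulling back through $\Phi$ produces the two disjoint open sets $A_\pm ( r )$ of $X_\sigma \cap ( \Gamma + B ( 0 , r ) )$; each of them contains $\Gamma$ in its closure since $( \gamma , \pm \varepsilon , 0 ) \to ( \gamma , 0 , 0 )$ as $\varepsilon \to 0^{+}$. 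The main technical step is the uniform normal form of the first paragraph; once it is granted, the topological dichotomy of the last step is essentially classical.
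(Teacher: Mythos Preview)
Your approach differs from the paper's and is more conceptual: you invoke a \emph{global} Morse--Bott normal form on a tubular neighborhood of $\Gamma$ and reduce the component count to the topology of $N_-\Gamma\setminus\Gamma$, so that the dichotomy $(a)/(b)$ becomes ``$N_-\Gamma$ non-orientable'' versus ``$N_-\Gamma$ trivial''. The paper proceeds instead purely locally: using the pointwise chart of Lemma~\ref{a36} together with the star-shapedness Lemma~\ref{a24}, it shows that $X_\sigma\cap B(b,r)$ has one component ($j\ge2$) or exactly two ($j=1$) for each $b\in\Gamma$ and each small $r$, and then glues these local pictures by compactness and connectedness of $\Gamma$. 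Your identification of the dichotomy with orientability of the negative normal line bundle is a real gain in insight; the paper's argument, in exchange, avoids appealing to a global normal form (note that \cite{BaHu04}, which you cite, only proves the pointwise statement recorded as Lemma~\ref{a36}; the tubular version you use requires the Morse--Palais lemma with parameters and should be cited or justified separately).

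There is, however, a gap in the retraction step. The homotopy $(\gamma,v_-,v_+)\mapsto(\gamma,v_-,t v_+)$ preserves the cone $\{\Vert v_+\Vert_{g_+}<\Vert v_-\Vert_{g_-}\}$, but nothing guarantees it preserves the particular neighborhood $\Phi(\Gamma+B(0,r))$: the image under $\Phi$ of a Euclidean tube need not be fiberwise star-shaped in $N\Gamma$. Hence you get a deformation retraction of the full cone onto $N_-\Gamma\setminus\Gamma$, but not of its intersection with $\Phi(\Gamma+B(0,r))$, and the word ``therefore'' is doing work it has not earned. A clean repair is to first treat the round disk-bundle neighborhoods $D_\epsilon=\{\Vert v_-\Vert^2+\Vert v_+\Vert^2<\epsilon\}$, where your homotopy manifestly stays inside (it only decreases $\Vert v_+\Vert$), and then transfer the conclusion to $\Gamma+B(0,r)$ by interlacing $\Phi^{-1}(D_{\epsilon_1})\subset\Gamma+B(0,r)\subset\Phi^{-1}(D_{\epsilon_2})$. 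Making that transfer rigorous still needs a statement to the effect that, for small $r$, $\Phi(\Gamma+B(0,r))$ is fiberwise star-shaped with respect to the zero section---which is exactly the content of the paper's Lemma~\ref{a24} applied to $\Phi$.
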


We postpone the proof of this proposition to Section \ref{s3}. Relying on this result, we introduce the following notions of locally separating and separating saddle manifolds.

\begin{defin}\sl \label{a2}
A saddle manifold $\Gamma \in \uuu^{( 1 )}$ satisfying $ii)$ $(b)$ of Proposition \ref{a23} is called {\it locally separating}. We say that a locally separating saddle manifold $\Gamma$ is {\it separating} when $A_{+} ( r )$ and $A_{-} ( r )$ belong to two disjoint connected components of $X_{\sigma}$ with $\sigma = f ( \Gamma )$. We will denote by $\uuu^{( 1 )}_{\rm sep}$ (resp. $\uuu^{( 1 )}_{\rm loc \ sep}$) the set of separating (resp. locally separating) saddle manifolds.
\end{defin}

Proposition \ref{a23} $i)$ shows that non-saddle critical manifolds are not separating. From Section 3.1 of \cite{HeKlNi04_01}, all the saddle points (that are saddle manifolds of dimension $0$) are locally separating (this also follows from \eqref{a27}). In dimension $1$ and $2$, all the saddle manifolds are locally separating. Indeed, we have just seen that this is the case when $d_{\Gamma} = 0$. Furthermore, if $d_{\Gamma} = 1$ in dimension $d = 2$, $\Gamma$ is topologically a circle which (globally) separates $\R^{2}$ into two parts. However, there exist saddle manifolds which are not locally separating in dimension $d \geq 3$, as shown by the following example.

\begin{figure}
\begin{center}
\includegraphics[width=230pt]{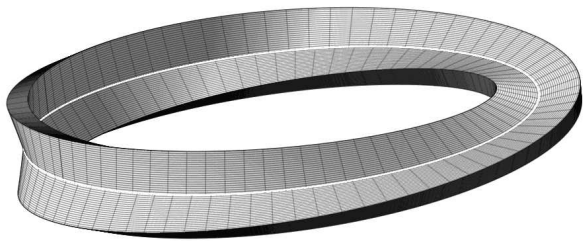}
\end{center}
\caption{The set $X_{\sigma}$ near the saddle manifold $\Gamma$ in Example \ref{e5}.} \label{f5}
\end{figure}

\begin{example}\rm \label{e5}
On $\R^{3}$ endowed with the cylinder variables $( r , \theta , z ) \in [ 0 , + \infty [ \times [ 0 , 2 \pi [ \times \R$, consider the function
\begin{equation*}
f = \big( ( r - 1 ) \cos ( \theta / 2) + z \sin ( \theta / 2 ) \big)^{2} - \big( z \cos ( \theta / 2 ) - ( r - 1 ) \sin ( \theta / 2 ) \big)^{2} ,
\end{equation*}
near $\Gamma = \{ ( r , \theta , z ) ; \ r = 1 \text{ and } z = 0\}$. This is noting than the function $a^{2} - b^{2}$ apply to the vector $( r - 1 , z )$ after a rotation of angle $\theta / 2$. Thus, $f$ is smooth, satisfies Assumption \ref{hyp1} and $\Gamma \in \uuu^{( 1 )}$. But, since the rotation of angle $\theta / 2$ induces a symmetry after a turn along $\Gamma$, this saddle manifold is not locally separating (see Figure \ref{e5}).
\end{example}

We deduce from Proposition \ref{a23} the following statement.

\begin{lemma}\sl \label{a4}
Let $\Gamma \in \uuu^{( 1 )}_{\rm sep}$ and $\sigma = f ( \Gamma )$. There exist exactly two connected components $B_{\pm}$ of $X_{\sigma}$ such that $\Gamma \cap \overline{B_{\pm}} \neq \emptyset$. Moreover, $\Gamma \subset \overline{B_{+}} \cap \overline{B_{-}}$ (see Figure \ref{f1}).
\end{lemma}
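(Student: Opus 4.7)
The statement is essentially a repackaging of Proposition \ref{a23} and Definition \ref{a2}; the only work is to promote the local picture near $\Gamma$ to a global statement about the connected components of $X_{\sigma}$.

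First I would fix $r>0$ small enough so that Proposition \ref{a23}~$ii)$~$(b)$ applies: we obtain the two disjoint open sets $A_{+}(r)$ and $A_{-}(r)$ whose union is $X_{\sigma}\cap(\Gamma+B(0,r))$, together with the inclusion $\Gamma\subset\overline{A_{+}(r)}\cap\overline{A_{-}(r)}$. Because $\Gamma\in\uuu^{(1)}_{\mathrm{sep}}$, Definition \ref{a2} provides two disjoint connected components $B_{+}$ and $B_{-}$ of $X_{\sigma}$ with $A_{\pm}(r)\subset B_{\pm}$.

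The inclusion $\Gamma\subset\overline{B_{+}}\cap\overline{B_{-}}$ follows immediately: taking closures in $A_{\pm}(r)\subset B_{\pm}$ gives $\Gamma\subset\overline{A_{\pm}(r)}\subset\overline{B_{\pm}}$.

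It remains to prove that no other connected component of $X_{\sigma}$ meets $\overline{\Gamma}=\Gamma$. Since $X_{\sigma}=\{f<\sigma\}$ is open in $\R^{d}$ and $\R^{d}$ is locally connected, every connected component of $X_{\sigma}$ is itself open. Suppose a component $B$ of $X_{\sigma}$ satisfies $\overline{B}\cap\Gamma\neq\emptyset$ and pick $x_{0}\in\overline{B}\cap\Gamma$, together with a sequence $x_{n}\in B$ converging to $x_{0}$. For $n$ large we have $x_{n}\in\Gamma+B(0,r)$, and since $x_{n}\in X_{\sigma}$, this forces $x_{n}\in A_{+}(r)\cup A_{-}(r)\subset B_{+}\cup B_{-}$. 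Thus $B$ shares a point with $B_{+}$ or with $B_{-}$, and as distinct connected components of $X_{\sigma}$ are disjoint, $B\in\{B_{+},B_{-}\}$. This proves uniqueness and finishes the proof.

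\textbf{Main obstacle.} None of the steps is hard; the only point where care is needed is observing that components of the open set $X_{\sigma}$ are themselves open (so that convergent sequences eventually fall inside them), which is what allows the purely local data of Proposition \ref{a23} to be transferred to the global components $B_{\pm}$.
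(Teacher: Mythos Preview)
Your argument is correct and is precisely the natural justification; the paper itself gives no proof of Lemma \ref{a4}, presenting it only as an immediate deduction from Proposition \ref{a23} and Definition \ref{a2}, so you have simply written out the details the authors left implicit. One small remark: the openness of the components of $X_\sigma$ is not actually what makes your sequence argument work (the $x_n$ lie in $B$ by construction, and they enter $\Gamma+B(0,r)$ because that neighborhood is open and contains $x_0$); the only fact you need at the end is that distinct connected components are disjoint, which is automatic.
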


With Definition \ref{a2} in mind, we can adapt the labeling procedure of minima and saddle manifolds introduced in \cite{HeKlNi04_01} and generalized in Section 4 of \cite{HeHiSj11_01}. There is no difference here expect that the role of the saddle points in the Morse case is replaced by the locally separating saddle manifolds in the present setting. Following the presentation of \cite{Mi19}, we recall quickly this labeling procedure and send the reader to the previous references for more details.

The set $\Sigma_{\rm sep}$ of separating saddle values is defined by $\Sigma_{\rm sep} = \{ f ( \Gamma ) ; \ \Gamma \in \uuu^{( 1 )}_{\rm sep} \}$. Its elements arranged in the decreasing order are denoted $\sigma_{2} > \sigma_{3} > \cdots > \sigma_{N}$ to which is added a fictive infinite separating saddle value $\sigma_{1} = + \infty$. Starting from $\sigma_{1}$, we will successively associate to each $\sigma_{i}$ a finite family of local minimal manifolds $( m_{i , j} )_{j}$ and a finite family of connected components $( E_{i , j} )_{j}$ of $X_{\sigma_{i}}$.

We choose $m_{1 , 1}$ as any global minimal manifold of $f$ (not necessarily unique) and $E_{1 , 1} = \R^{d}$. In the sequel, we denote ${\underline m} = m_{1 , 1}$. We continue the labeling procedure by induction and suppose that the families $( m_{k , j} )_{j}$ and $( E_{k , j} )_{j}$ have been constructed for all $1 \leq k \leq i - 1$. The set $X_{\sigma_{i}} = \{ x \in \R^{d} ; \ f ( x ) < \sigma_{i} \}$ has finitely many connected components and we label $E_{i , j}$, $j = 1 , \ldots , N_{i}$ those of these components that do not contain any $m_{k , \ell}$ with $k < i$. In each $E_{i , j}$ we pick up a minimal manifold $m_{i , j}$ which is a global minimum of $f_{\vert E_{i , j}}$. We run the procedure until all the minimal manifolds have been labeled. Note that all the components $E_{i , j}$ with $i \geq 2$ are critical in the sense that there exists $\Gamma \in \uuu^{( 1 )}_{\rm sep}$ such that $\Gamma \subset \overline{E_{i , j}}$ (see Lemma \ref{a4}).

\begin{figure}
\begin{center}
\begin{picture}(0,0)%
\includegraphics{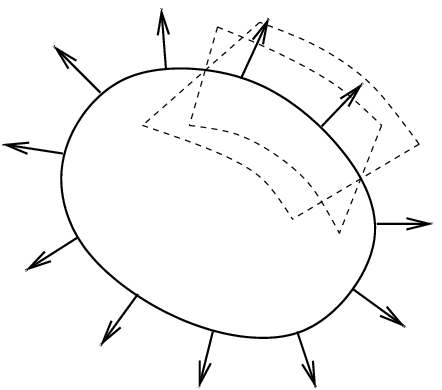}%
\end{picture}%
\setlength{\unitlength}{1184sp}%
\begingroup\makeatletter\ifx\SetFigFont\undefined%
\gdef\SetFigFont#1#2#3#4#5{%
  \reset@font\fontsize{#1}{#2pt}%
  \fontfamily{#3}\fontseries{#4}\fontshape{#5}%
  \selectfont}%
\fi\endgroup%
\begin{picture}(6966,6141)(-3407,-7144)
\put(-1649,-5086){\makebox(0,0)[lb]{\smash{{\SetFigFont{9}{10.8}{\rmdefault}{\mddefault}{\updefault}$\Gamma$}}}}
\put(2851,-2911){\makebox(0,0)[lb]{\smash{{\SetFigFont{9}{10.8}{\rmdefault}{\mddefault}{\updefault}$B_{+}$}}}}
\put(1276,-4861){\makebox(0,0)[lb]{\smash{{\SetFigFont{9}{10.8}{\rmdefault}{\mddefault}{\updefault}$B_{-}$}}}}
\put(2851,-5611){\makebox(0,0)[lb]{\smash{{\SetFigFont{9}{10.8}{\rmdefault}{\mddefault}{\updefault}$\nu ( x )$}}}}
\end{picture}%
\end{center}
\caption{The geometry near a separating saddle manifold.}
\label{f1}
\end{figure}

Throughout $\Gamma_{1}$ is a fictive saddle point such that $f ( \Gamma_{1} ) = \sigma_{1} = + \infty$  and for any set $A$, ${\mathcal P} ( A )$ denotes the power set of $A$. From the above labeling, we define two mappings
\begin{equation}\label{FJ}
E : {\mathcal U}^{( 0 )} \to {\mathcal P} ( \R^{d} ) \qquad \text{and} \qquad {\bf j} : {\mathcal U}^{( 0 )} \to {\mathcal P} ( \uuu^{( 1 )}_{\rm sep} \cup \{ \Gamma_{1} \} ) ,
\end{equation}
as follows: for every $i \in \{ 1 , \dots , N \}$ and $j \in \{ 1 , \dots , N_{i} \}$,
\begin{equation} \label{a54}
E ( m_{i , j} ) : = E_{i , j} ,
\end{equation}
and
\begin{equation} \label{a53}
{\bf j} ( {\underline m} ) : = \{ \Gamma_{1} \} \qquad \text{and} \qquad {\bf j} ( m_{i , j} ) : = \{\Gamma\in \uuu_{\rm sep}^{( 1 )} ; \ \Gamma \cap \partial E_{i , j} \neq \emptyset\} \text{ for } i \geq 2 .
\end{equation}
In particular, we have $E ( \underline{m} ) = \R^{d}$ and, for all $i, j \in \{ 1 , \dots , N \}$, one has ${\bf j} ( m_{i , j} ) \neq \emptyset$ and $f_{\vert \Gamma} = \sigma_{i}$ for all $\Gamma \in {\bf j} ( m_{i , j} )$. Moreover, it follows from Lemma \ref{a4} that 
\begin{equation} \label{e9}
\forall m \in \uuu^{( 0 )} \setminus \{ {\underline m} \} , \qquad {\bf j} ( m ) \subset {\mathcal P} ( \partial E ( m ) ) .
\end{equation}
We then define the mappings
\begin{equation}\label{e11}
{\bsigma} : \uuu^{( 0 )} \rightarrow f ( \uuu_{\rm sep}^{( 1 )} ) \cup \{ \sigma_{1} \} \qquad \text{and} \qquad S : \uuu^{( 0 )} \rightarrow ] 0 , + \infty ] ,
\end{equation}
by
\begin{equation} \label{a55}
\forall m \in \uuu^{( 0 )} , \qquad \bsigma ( m ) : = f ( {\bf j} ( m ) ) \qquad \text{and} \qquad S ( m ) : = \bsigma ( m ) - f ( m ) ,
\end{equation}
where, with a slight abuse of notation, we have identified the set $f ( {\bf j} ( m ) )$ with its unique element. Note that $S ( m ) = + \infty$ if and only if $m = \underline{m}$.

\subsection{Main result} We are now in position to introduce our last assumption. In addition to Assumptions \ref{hyp0} and \ref{hyp1}, we will suppose

\begin{hyp}\sl \label{hyp2} The following holds true:
\begin{align*}
&\star \text{ for all } m \in \uuu^{( 0 )} , \ m \text{ is the unique global minimum of } f_{\vert E ( m )} ,  \\
&\star \text{ for all } m , m^{\prime} \in \uuu^{( 0 )} \text{ with } m \neq m ^{\prime} , \ {\bf j} ( m ) \cap {\bf j} ( m^{\prime} ) = \emptyset.
\end{align*}
\end{hyp}

In particular, this assumption implies that $f$ uniquely attains its global minimum at ${\underline m} \in \uuu^{( 0 )}$. This assumption is a generalization of Hypothesis 5.1 of \cite{HeHiSj11_01} (see also \cite{LePMi20}). By smooth perturbation of the function $f$, one sees that it is generically satisfied. We now state our main result.

\begin{theorem}\sl \label{e12}
Let Assumptions \ref{hyp0}, \ref{hyp1}, \ref{hyp2} hold. There exist $\eta_{0} , h_{0} > 0$ such that, for all $h \in ] 0 , h_{0}]$, one has, counting the eigenvalues with multiplicities,
\begin{equation*}
\sigma ( \Delta_{f} ) \cap [ 0 , \eta_{0} h^{2} ] = \{ \lambda ( m , h) ; \ m \in \mathcal{U}^{( 0 )} \} ,
\end{equation*}
where $\lambda ( \underline{m} , h ) = 0$ and, for all $\underline{m} \neq m \in \mathcal{U}^{( 0 )}$, $\lambda ( m , h )$ satisfies the following Eyring--Kramers law
\begin{equation}\label{e13}
\lambda ( m , h ) = \mathcal{D} ( m ) h^{\frac{d_{m} - d_{m}^{\rm max}}{2} + 1} e^{- \frac{2 S ( m )}{h}} \alpha ( h ) ,
\end{equation}
where $S: \mathcal{U}^{(0)} \to ] 0 , + \infty ]$ is defined by \eqref{e11}, $\alpha ( h )$ admits a classical expansion in powers of $h^\frac{1}{2}$ of the form $\alpha ( h ) \sim \sum_{j \geq 0} \alpha_{j} h^{\frac{j}{2}}$ with $\alpha_{0} = 1$ and $(\alpha_{j})_{j \geq 1} \subset \R$, and for any $m \in \mathcal{U}^{( 0 )} \setminus \{ \underline{m} \}$
\begin{equation*}
\mathcal{D} ( m ) = \frac{\sum_{\Gamma \in {\bf{j}}^{\text{max}} ( m )} \pi^{\frac{d_{m} - d_{m}^{\rm max}}{2} - 1} \int_{\Gamma} \vert \mu ( s ) \vert \vert \det \hess_{\perp} f ( s ) \vert^{- \frac{1}{2}} d s}{\int_{m} \vert \det \hess_{\perp} f ( s ) \vert^{- \frac{1}{2}} d s} .
\end{equation*}
Here $d_{m}^{\rm max} : = \max_{\Gamma \in {\bf j} ( m )} d_{\Gamma}$, ${\bf j}^{\text{max}} ( m ) : = \{ \Gamma \in {\bf{j}} ( m ) ; \ d_{\Gamma} = d_{m}^{\rm max} \}$, $\mu ( s )$ is the unique negative eigenvalue of $\hess f ( s )$ and $\hess_{\perp} f$ is the Hessian of $f$ restricted to the normal space of the considered critical manifold.
\end{theorem}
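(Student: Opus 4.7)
\textit{Proof sketch.} By Theorem~\ref{e14}, $\Delta_{f}$ has exactly $n_{0}$ eigenvalues in $[0,\eta_{0} h^{2}]$; let $\Pi$ denote the spectral projector onto them. The plan is to build an $n_{0}$-dimensional family of quasimodes $\{v_{m}\}_{m \in \uuu^{(0)}}$ of the form $v_{m} = \chi_{m}\, e^{-f/h}$, where $\chi_{m} \in C^{\infty}(\R^{d};[0,1])$ equals $1$ on most of $E(m)$ and transitions to $0$ across each separating saddle manifold $\Gamma \in {\bf j}(m)$ via a Gaussian-type profile of width $\sqrt{h/|\mu(s)|}$ along the unstable normal direction of $\Gamma$, with smooth dependence on the arclength parameter $s \in \Gamma$ to accommodate the tangential degeneracy. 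A standard Helffer--Sj\"ostrand-type argument then identifies $\ran \Pi$ with $\vect(\Pi v_{m})$ and reduces the computation of the small eigenvalues to diagonalizing the $n_{0} \times n_{0}$ matrix $M_{m,m'} = \langle \Delta_{f} v_{m}, v_{m'}\rangle / (\|v_{m}\|\,\|v_{m'}\|)$. Using the supersymmetric identity $d_{f}(\chi e^{-f/h}) = h (\nabla \chi) e^{-f/h}$ together with $\Delta_{f} = d_{f}^{*} d_{f}$, one has $\langle \Delta_{f} v_{m}, v_{m'}\rangle = h^{2} \int \nabla \chi_{m} \cdot \nabla \chi_{m'}\, e^{-2f/h}\, dx$. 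The disjointness condition ${\bf j}(m) \cap {\bf j}(m') = \emptyset$ of Assumption~\ref{hyp2}, combined with the fact that $\nabla \chi_{m}$ is spatially concentrated in an $h^{1/2}$-neighborhood of $\bigcup_{\Gamma \in {\bf j}(m)} \Gamma$, makes $M$ diagonal modulo negligible remainders, and hence $\lambda(m,h) \sim M_{m,m}$.

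The core computation is the Laplace expansion of $\|v_{m}\|^{2}$ and $\|d_{f} v_{m}\|^{2}$ in the Morse--Bott geometry of Lemma~\ref{a36}. Transverse Gaussian integration near $m$ combined with tangential integration along $m$ gives
\[
\|v_{m}\|^{2} \sim (\pi h)^{(d-d_{m})/2}\, e^{-2f(m)/h} \int_{m} |\det \hess_{\perp} f|^{-1/2}\, ds.
\]
For $\|d_{f} v_{m}\|^{2} = h^{2} \int |\nabla \chi_{m}|^{2} e^{-2f/h}\, dx$, I would localize near each $\Gamma \in {\bf j}(m)$ and use the local normal form of Lemma~\ref{a36} translated back to the actual eigenvalues $\mu(s), \lambda_{i}(s)$ of $\hess_{\perp} f(s)$. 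The choice of width $\sqrt{h/|\mu(s)|}$ for the transition profile of $\chi_{m}$ is precisely what compensates the growing factor $e^{|\mu(s)|\, y_{-}^{2}/h}$ coming from $e^{-2f/h}$ and produces the multiplier $|\mu(s)|$; together with the $y_{+}$-Gaussian yielding $(\pi h)^{(d-d_{\Gamma}-1)/2} |\det \hess_{\perp} f(s)|^{-1/2}$ and the tangential integration $\int_{\Gamma} ds$, the contribution near $\Gamma$ scales like $h^{(d-d_{\Gamma})/2 + 1}\, \pi^{(d-d_{\Gamma})/2 - 1}\, e^{-2f(\Gamma)/h} \int_{\Gamma} |\mu(s)|\, |\det \hess_{\perp} f(s)|^{-1/2}\, ds$. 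Dividing by $\|v_{m}\|^{2}$ produces the prefactor $h^{(d_{m} - d_{\Gamma})/2 + 1}\, e^{-2S(m)/h}$ times exactly the ratio of line integrals entering $\mathcal{D}(m)$. Since smaller $d_{\Gamma}$ yields a strictly larger power of $h$, only saddles $\Gamma \in {\bf j}^{\max}(m)$ contribute at leading order, reproducing the sum defining $\mathcal{D}(m)$.

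The main obstacle I anticipate is the precise construction of the cutoff $\chi_{m}$ together with the propagation of the full classical expansion $\alpha(h) \sim \sum_{j} \alpha_{j} h^{j/2}$ with $\alpha_{0} = 1$. Getting the sharp leading coefficient $\mathcal{D}(m)$ requires the transition profile of $\chi_{m}$ across each $\Gamma$ to match the optimal quasimode through the saddle manifold (the Morse--Bott analog of the instanton profile used in the Morse setting of \cite{HeKlNi04_01, BoLePMi22}), now with a smooth dependence along $\Gamma$ that reflects the variation of $\mu(s)$ and of $\hess_{\perp} f(s)$. For the subleading coefficients $\alpha_{j}$, $j \geq 1$, one must propagate WKB-type corrections along $\Gamma$ and systematically control the cubic and higher-order remainders of the Morse--Bott normal form over the $\sqrt h$-neighborhood of $\Gamma$, which a priori contribute at order $h^{1/2}$ and thus feed directly into $\alpha_{1}$. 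Finally, Agmon-type decay estimates are needed to ensure that $\Pi v_{m}$ differs from $v_{m}$ by an error negligible compared to $e^{-2S(m)/h}$, so as to avoid any contamination of the exponentially small eigenvalues through the spectral projection step.
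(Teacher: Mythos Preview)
Your overall strategy matches the paper's: quasimodes of the form cutoff times Gibbs state with a Gaussian-type transition across each $\Gamma \in {\bf j}(m)$, Laplace asymptotics on the Morse--Bott critical manifolds for $\Vert\psi_m\Vert^2$ and $\langle\Delta_f\psi_m,\psi_m\rangle$, and reduction to a near-diagonal $n_0\times n_0$ matrix on $\ran\Pi_h$. The place where the paper goes beyond your sketch is precisely the construction of the transition profile, which you correctly flag as the main obstacle. Rather than working in Morse--Bott coordinates with an $s$-dependent width $\sqrt{h/|\mu(s)|}$ and then controlling cubic remainders, the paper writes $v_\Gamma(x,h)=C_\Gamma^{-1}\int_0^{\ell(x,h)}\zeta(s/\tau)\,e^{-s^2/2h}\,ds$ with a phase $\ell\sim\sum_{j\ge0}h^j\ell_j$ built intrinsically near $\Gamma$: the leading term $\ell_0$ is characterized by the factorization $\phi_+=f-f(\Gamma)+\tfrac12\ell_0^2$ of the Agmon distance to $\Gamma$ (Lemma~\ref{a8}), which makes the eikonal equation $2\nabla f\cdot\nabla\ell_0+|\nabla\ell_0|^2\ell_0=0$ automatic (Lemma~\ref{a12}), and the corrections $\ell_j$ solve linear transport equations along the flow of $\nabla\phi_+$ (Lemma~\ref{a16}). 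This global WKB construction, rather than coordinate-wise normal forms, is what delivers the key estimate $\Vert\Delta_f\psi_m\Vert^2=\ooo(h^\infty)\langle\Delta_f\psi_m,\psi_m\rangle$ of Proposition~\ref{a42}~$iii)$; from it the projection error $(1-\Pi_h)\varphi_j=\ooo(h^\infty\sqrt{\mu_j})$ follows directly via the resolvent bound on $\partial D(0,\tfrac{\eta_0}{2}h^2)$, so Agmon decay estimates play no role in the argument.
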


Recall that the equation
\begin{equation}\label{eq:evol1}
h \partial_{t} u + \Delta_{f} u = 0 , \qquad u_{\vert t = 0} = u_{0} ,
\end{equation}
models the evolution of the probability of presence of a Brownian particle, solution of \eqref{e6}, with initial distribution $u_0$. Hence
the spectral asymptotics of the above theorem yield immediately quantitative informations on the solutions of \eqref{eq:evol1}.
First,  the time to return to equilibrium is given by the inverse of the spectral gap, that is the inverse of the first non zero eigenvalue. Moreover, the precise knowledge of the other small eigenvalues 
permits to understand the metastable behavior of the system (see Corollary 1.6 in \cite{BoLePMi22} for precise statements in the context of Morse functions).

The Eyring--Kramers asymptotic \eqref{e13} has some similarities with the one obtained in the Morse case, see \cite{BoGaKl05_01, HeKlNi04_01}. We recognize the same exponentially small factor $e^{- 2 S ( m ) / h}$, however, the power of $h$ depends now on the dimension of the minimal manifolds $m$ and of the separating saddle manifolds $\Gamma$. Lastly, the constant factor $\mathcal{D} ( m )$ averages the contribution of the critical manifolds $m$ and $\Gamma$. In the more general setting where the function $f$ is only assumed to be Lipschitz subanalytic with a finite number of critical values, Le Peutrec, Nier and Viterbo \cite{LePNiVi21} are able to give the semiclassical limit of $h \ln \lambda ( m , h)$. Though, this approach is very general, it seems that, in many geometrical cases, it doesn't allow to recover the prefactor and in particular, the power of $h$ in the asymptotic of $\lambda ( m , h )$.

As already noticed, the  return to the global equilibrium is  faster as the spectral gap is larger. We observe from the power of $h$ in the prefactor of \eqref{e13} that the spectral gap increases when $d_{m}$ decreases or $d_{m}^{\rm max}$ increases. This is natural since a minimal manifold of smaller dimension seems less trapping. The same way, it seems easier to pass through a saddle manifold of larger dimension. In this direction, note also that only the saddle manifolds of maximal dimension appear in the leading term of \eqref{e13} which suggests that the underlying process selects the largest saddle manifolds to escape from a local minimum. Variations of the power of $h$ were observed in \cite{BeGe10} where the authors prove Eyring--Kramers formula for the exit time from a domain in the case of non quadratic separating saddle point. It would be very interesting to give rigorous results on the exit event in our Morse--Bott case in the spirit of \cite{BoEcGaKl04,DiGLeLePNe19}.

In the usual Eyring--Kramers asymptotic for Morse functions, the corresponding symbol $\alpha(h)$ admits generally an asymptotic expansion in integer powers of $h$. In our Morse--Bott setting, this is the case if and only if the dimensions of the saddle manifolds in ${\bf j}(m)$ have the same parity. This follows from Proposition \ref{a42} and \eqref{e22}.

The results have been stated on $\R^{d}$, but can be adapted to compact boundaryless manifolds. Indeed, the constructions made near the saddle manifolds are purely local. On the other hand, let us observe that the generic Assumption \ref{hyp2} could certainly be relaxed by using methods in the spirit of \cite{Mi19,BoLePMi22}. Finally, generalizations to the study of Witten Laplacian on $p$-forms, $p\geq 1$, could also be investigated.

\subsection{Applications and examples}

In this part, we study typical situations where Theorem \ref{e12} can be used.

We first give the asymptotic of the small eigenvalues of $\Delta_{f}$ when $f$ is radial. Then, let $f ( x )$ be a radial smooth function on $\R^{d}$, $d \geq 2$, satisfying Assumptions \ref{hyp0} and \ref{hyp1}. Outside of $0$, the critical manifolds of $f$ are spheres of dimension $d - 1$ which are either minima or separating saddles. Moreover, $0$ is either a minimum or a maximum of dimension $0$. We define
\begin{equation*}
F ( r ) = f ( x ) \qquad \text{with} \qquad r = \sqrt{x^{2}_{1} + \cdots + x^{2}_{d}} ,
\end{equation*}
for $r \in ( 0 , + \infty [$ with $( = [$ or $( = ]$ if $0$ is a minimum or a maximum respectively. The function $F$ is a smooth Morse function satisfying Assumption \ref{hyp0} on $( 0 , + \infty [$. Furthermore, the minima and saddles of $F$ correspond to those of $f$. The labeling procedures at the end of Section \ref{s13} for $f$ and $F$ can be carried out in parallel and can lead to the same labels mutatis mutandis: a critical point $r \in ( 0 , + \infty [$ of $F$ corresponds to a critical manifold $\{ \vert x \vert = r \}$ of $f$. Thus, $f$ satisfies Assumption \ref{hyp2} if and only if $F$ satisfies Assumption \ref{hyp2} on $( 0 , + \infty [$. Let us suppose that this is the case in the sequel.

We know that ${\bf j} ( m ) \neq \emptyset$ for all minimum $m$ (see below \eqref{a26}). Moreover, the number of minima and saddle points is the same in dimension $1$ (recall that a fictive saddle point has been added). Thus, the second part of Assumption \ref{hyp2} implies that, for any minimal manifold $m = \{ \vert x \vert = r_{m} \}$ with $r_{m} \in ( 0 , + \infty [$, there exists a unique saddle manifold $\Gamma_{m} = \{ \vert x \vert = s_{m} \}$ with $s_{m} \in ] 0 , + \infty [$ such that ${\bf j} ( m ) = \{ \Gamma_{m} \}$. Theorem \ref{e12} directly gives

\begin{corollary}[Asymptotic for radial functions]\sl \label{e4}
In the previous setting,
\begin{equation} \label{e3}
\lambda ( m , h ) =
\left\{ \begin{aligned}
&\frac{s_{m}^{d - 1}}{\pi r_{m}^{d - 1}} \sqrt{F^{\prime \prime} ( r_{m} ) \vert F^{\prime \prime} ( s_{m} ) \vert} h e^{- \frac{2 S ( m )}{h}} \alpha ( h ) &&\text{for } m \neq \{ 0 \} , \\
&\frac{\vert \S^{d - 1} \vert s_{m}^{d - 1}}{\pi^{\frac{1 + d}{2}}} F^{\prime \prime} ( r_{m} )^{\frac{d}{2}} \sqrt{\vert F^{\prime \prime} ( s_{m} ) \vert} h^{\frac{3 - d}{2}} e^{- \frac{2 S ( m )}{h}} \alpha ( h )&&\text{for } m = \{ 0 \} ,
\end{aligned} \right.
\end{equation}
where $\vert \S^{d - 1} \vert$ denotes the measure of the unit sphere, $S ( m ) = F ( s_{m} ) - F ( r_{m} )$ and $\alpha ( h )$ is as in \eqref{e13}. The second part of \eqref{e3} makes sense only when $0$ is a minimum.
\end{corollary}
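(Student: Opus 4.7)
The strategy is simply to specialize Theorem \ref{e12} to the radial setting by computing each ingredient entering the prefactor $\mathcal{D}(m)$ and the power of $h$. Assumption \ref{hyp2} is guaranteed by the corresponding hypothesis on $F$ and the discussion preceding the corollary, so the theorem is applicable.

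First I would identify the dimensions. For a minimum $m = \{|x|=r_m\}$ with $r_m > 0$, both $m$ and its unique associated saddle manifold $\Gamma_m = \{|x| = s_m\}$ are $(d-1)$-spheres, hence $d_m = d_m^{\text{max}} = d-1$ and the power of $h$ in \eqref{e13} equals $1$. For $m = \{0\}$, one has $d_m = 0$ and $d_m^{\text{max}} = d-1$, producing the power $h^{(3-d)/2}$. In both cases $\mathbf{j}^{\text{max}}(m) = \{\Gamma_m\}$ thanks to Assumption \ref{hyp2}.

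Next I would compute the transversal Hessians. On a critical sphere $\{|x| = \rho\}$ of the radial function $f(x) = F(|x|)$, the tangent space is the spherical direction and the normal space is the radial one, so a direct Taylor expansion in the normal coordinate $r-\rho$ (using $F'(\rho)=0$) gives $\hess_\perp f = F''(\rho)$ as a $1\times 1$ matrix. Consequently $|\det \hess_\perp f|^{-1/2}$ is the constant $|F''(\rho)|^{-1/2}$ along the sphere, and for the saddle $\Gamma_m$ the unique negative eigenvalue is $\mu(s) = F''(s_m)$. The integrals over the critical spheres are therefore
\begin{equation*}
\int_{\{|x|=\rho\}} |\det \hess_\perp f|^{-1/2}\, ds = |\mathbb{S}^{d-1}| \rho^{d-1} |F''(\rho)|^{-1/2}.
\end{equation*}
For $m = \{0\}$, smoothness of $f$ forces $F'(0) = 0$ and $f(x) = F(0) + \tfrac{1}{2} F''(0) |x|^2 + O(|x|^4)$, so $\hess f(0) = F''(0)\, \Id$ and the transverse Hessian is the full one, yielding $|\det \hess_\perp f(0)|^{-1/2} = F''(0)^{-d/2}$; the integral on a $0$-dimensional manifold is then simply this point value.

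Plugging these data into the definition of $\mathcal{D}(m)$ from Theorem \ref{e12} is the only remaining step. For $m \neq \{0\}$ the factors $|\mathbb{S}^{d-1}|$ in numerator and denominator cancel and the exponent of $\pi$ becomes $-1$, giving
\begin{equation*}
\mathcal{D}(m) = \frac{s_m^{d-1}}{\pi r_m^{d-1}} |F''(s_m)|^{1/2} F''(r_m)^{1/2},
\end{equation*}
which is exactly the first line of \eqref{e3}. For $m = \{0\}$ the exponent of $\pi$ is $(1-d)/2 - 1 = -(d+1)/2$ and no spherical factor appears in the denominator, producing
\begin{equation*}
\mathcal{D}(\{0\}) = \frac{|\mathbb{S}^{d-1}| s_m^{d-1}}{\pi^{(d+1)/2}}\, F''(0)^{d/2}\, |F''(s_m)|^{1/2},
\end{equation*}
matching the second line of \eqref{e3}. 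There is no genuine obstacle: the entire argument is a bookkeeping reduction of the general formula; the only point requiring care is the treatment of $m = \{0\}$, where the transverse Hessian is the full Hessian and the radial Taylor expansion of $F$ must be used to justify that $\hess f(0)$ is the isotropic matrix $F''(0)\,\Id$.
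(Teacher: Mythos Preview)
Your proposal is correct and follows exactly the paper's approach: the corollary is obtained, as the paper says, ``directly'' from Theorem \ref{e12}, and what you have written is precisely the bookkeeping computation of $d_m$, $d_m^{\max}$, $\mu$, and the transversal Hessians needed to specialize \eqref{e13} to the radial case. Your treatment of the point $m=\{0\}$ via rotational symmetry of $\hess f(0)$ is the right way to handle the one nontrivial ingredient.
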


This asymptotic can be compared with the one associated to $F$ in dimension $1$. More precisely, let $\Lambda ( m , h )$ denote the quantity formally computed from formula \eqref{e13} with the function $F$ at the minimal point $r_{m}$. For $m = \{ 0 \}$, which only makes sense when $0$ is a minimum of $f$, this computation is purely formal. For $m \neq \{ 0 \}$, this quantity can be seen as the eigenvalue of $\Delta_{\widetilde{F}}$ where the function $\widetilde{F}$ is defined on $\R$, satisfies the assumptions of Theorem \ref{e12} and coincides with $F$ outside a small neighborhood of $]- \infty , 0 ]$ without additional critical point. Then, we have
\begin{equation}
\Lambda ( m , h ) \sim \lambda ( m , h ) \times
\left\{ \begin{aligned}
&\frac{r_{m}^{d - 1}}{s_{m}^{d - 1}} &&\text{for } m \neq \{ 0 \} , \\
&\pi^{\frac{d - 1}{2}} F^{\prime \prime} ( r_{m} )^{\frac{1 - d}{2}} s_{m}^{1 - d}  \vert \S^{d - 1} \vert^{- 1} h^{\frac{d - 1}{2}} &&\text{for } m = \{ 0 \} .
\end{aligned} \right.
\end{equation}
Roughly speaking, spherical minima behave like minimal points en dimension one whereas $0$ yields an asymptotic of different order. We now apply Corollary \ref{e4} in a concrete situation.

\begin{figure}
\begin{center}
\includegraphics[width=180pt]{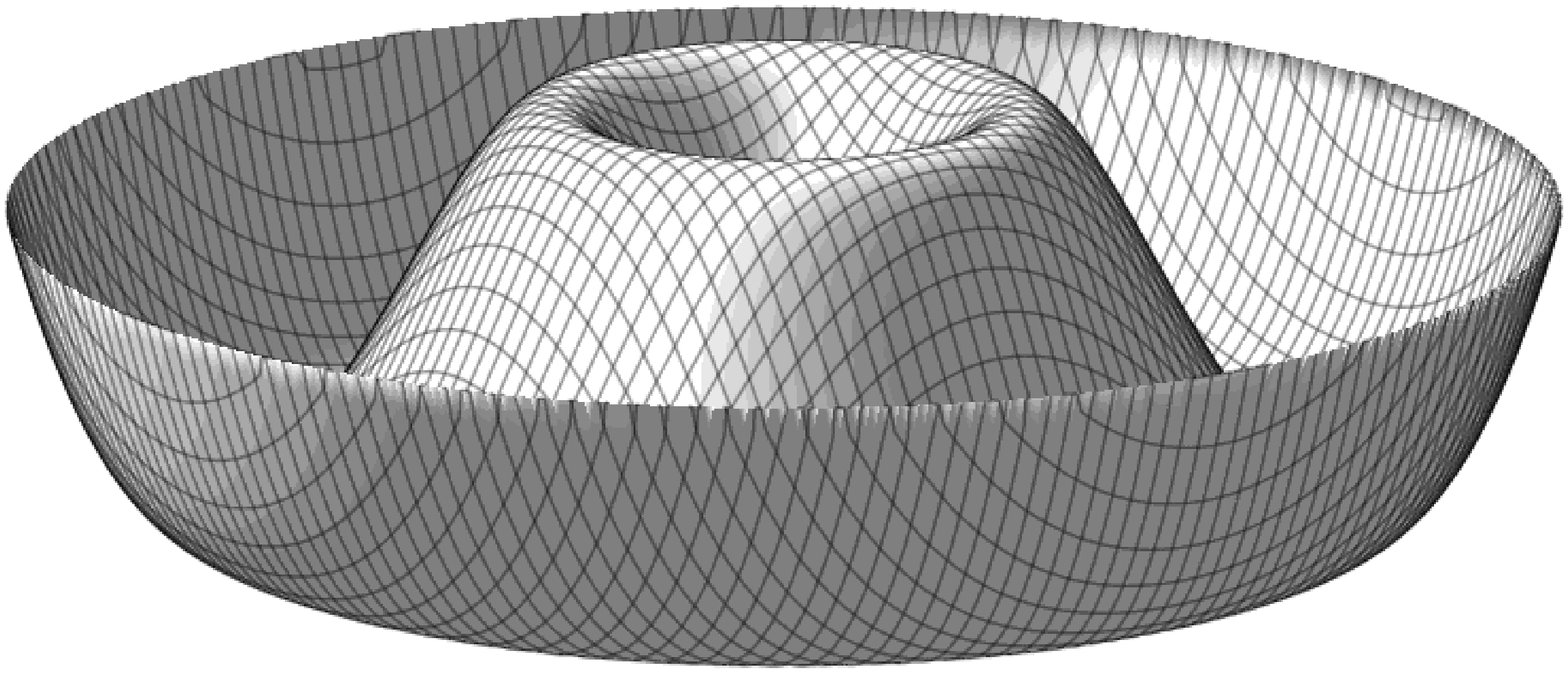} $\quad \qquad$
\begin{picture}(0,0)%
\includegraphics{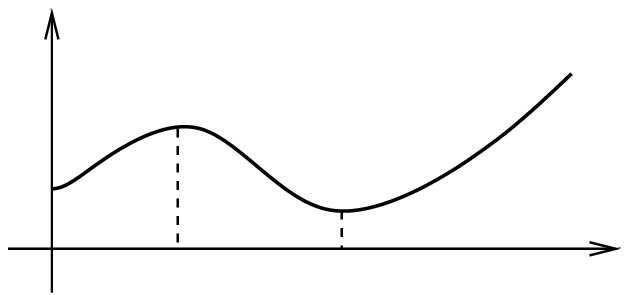}%
\end{picture}%
\setlength{\unitlength}{1381sp}%
\begingroup\makeatletter\ifx\SetFigFont\undefined%
\gdef\SetFigFont#1#2#3#4#5{%
  \reset@font\fontsize{#1}{#2pt}%
  \fontfamily{#3}\fontseries{#4}\fontshape{#5}%
  \selectfont}%
\fi\endgroup%
\begin{picture}(8598,4005)(-764,206)
\put(7801,1064){\makebox(0,0)[b]{\smash{{\SetFigFont{9}{10.8}{\rmdefault}{\mddefault}{\updefault}$r$}}}}
\put(3976,389){\makebox(0,0)[b]{\smash{{\SetFigFont{9}{10.8}{\rmdefault}{\mddefault}{\updefault}$r_{1}$}}}}
\put(4051,1664){\makebox(0,0)[b]{\smash{{\SetFigFont{9}{10.8}{\rmdefault}{\mddefault}{\updefault}$m_{1}$}}}}
\put(1726,2789){\makebox(0,0)[b]{\smash{{\SetFigFont{9}{10.8}{\rmdefault}{\mddefault}{\updefault}$\Gamma_{2}$}}}}
\put(220,4064){\makebox(0,0)[lb]{\smash{{\SetFigFont{9}{10.8}{\rmdefault}{\mddefault}{\updefault}$F ( r )$}}}}
\put(1726,389){\makebox(0,0)[b]{\smash{{\SetFigFont{9}{10.8}{\rmdefault}{\mddefault}{\updefault}$s_{2}$}}}}
\put(-299,389){\makebox(0,0)[b]{\smash{{\SetFigFont{9}{10.8}{\rmdefault}{\mddefault}{\updefault}$0$}}}}
\put(-749,1664){\makebox(0,0)[lb]{\smash{{\SetFigFont{9}{10.8}{\rmdefault}{\mddefault}{\updefault}$m_{2}$}}}}
\end{picture}%
\end{center}
\caption{The functions $f ( x )$ and $F ( r )$ in Example \ref{e1}.} \label{f4}
\end{figure}

\begin{example}[Mexican hat]\rm \label{e1}
We consider a smooth function $f$ on $\R^{2}$ which is radial, satisfies the assumptions of Theorem \ref{e12} and is as in Figure \ref{f4}. Other types of Mexican hats have been considered around Figure 19 of \cite{LePNiVi21}. In the present setting, the set of critical manifolds writes
\begin{equation*}
\uuu = \{ m_{1} , m_{2} , \Gamma_{2} \} ,
\end{equation*}
where, using the notations at the end of Section \ref{s13},
\begin{equation*}
m_{1} = m_{1 , 1} = \underline{m} = \{ r = r_{1} \} , \qquad m_{2} = m_{2 , 1} = \{ 0 \} , \qquad \Gamma_{2} = \{ r = s_{2} \} ,
\end{equation*}
and ${\bf j} ( m_{2} ) = \{ \Gamma_{2} \}$. Note that $d_{m_{1}} = d_{\Gamma_{2}} = 1$ and $d_{m_{2}} = 0$. As before, we define $F ( r ) = f ( x )$ for $r \in [ 0 , + \infty [$. From Corollary \ref{e4}, the two exponentially small eigenvalues of $\Delta_{f}$ satisfy $\lambda ( m_{1} , h ) = 0$ and
\begin{equation}
\lambda ( m_{2} , h ) = \frac{2 s_{2}}{\sqrt{\pi}} F^{\prime \prime} ( 0 ) \sqrt{\vert F^{\prime \prime} ( s_{2} ) \vert} \sqrt{h} e^{- \frac{2 S ( m_{2} )}{h}} \alpha ( h ) ,
\end{equation}
with $S ( m_{2} ) = F ( s_{2} ) - F ( 0 )$ and $\alpha ( h )$ as in \eqref{e13}.
\end{example}

We finish this section with another type of examples.

\begin{example}[Blow-up of minima]\rm \label{e2}
Let $\widetilde{f}$ be a Morse function on $\R^{d}$, $d \geq 2$, satisfying Assumptions \ref{hyp0} and \ref{hyp2}. It is possible to construct a (not unique) smooth function $f$ ``blowing up'' the minima of $\widetilde{f}$. It means that $f = \widetilde{f}$ outside of a neighborhood of the minima of $\widetilde{f}$ and that each minimal point $\widetilde{m}$ of $\widetilde{f}$ becomes a small minimal manifold $m$ of $f$ diffeomorphic to the sphere $\S^{d - 1}$ with $\widetilde{f} ( \widetilde{m} ) = f ( m )$ (see Figure \ref{f6}).

\begin{figure}
\begin{center}
\begin{picture}(0,0)%
\includegraphics{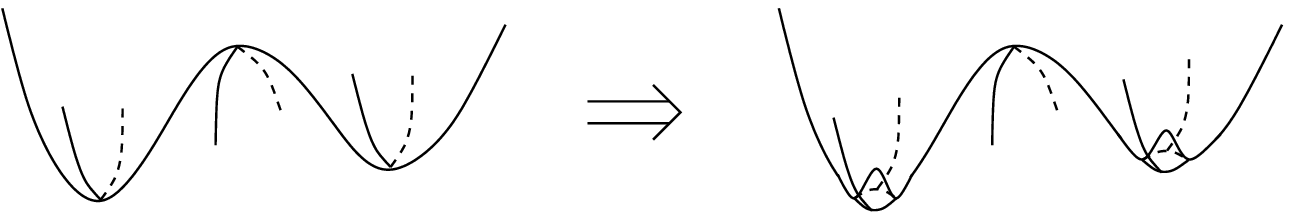}%
\end{picture}%
\setlength{\unitlength}{1381sp}%
\begingroup\makeatletter\ifx\SetFigFont\undefined%
\gdef\SetFigFont#1#2#3#4#5{%
  \reset@font\fontsize{#1}{#2pt}%
  \fontfamily{#3}\fontseries{#4}\fontshape{#5}%
  \selectfont}%
\fi\endgroup%
\begin{picture}(17616,2991)(-5057,581)
\put(1201,2939){\makebox(0,0)[lb]{\smash{{\SetFigFont{9}{10.8}{\rmdefault}{\mddefault}{\updefault}$\widetilde{f}$}}}}
\put(11851,2939){\makebox(0,0)[lb]{\smash{{\SetFigFont{9}{10.8}{\rmdefault}{\mddefault}{\updefault}$f$}}}}
\end{picture}%
\end{center}
\caption{A blow-up of minima described in Example \ref{e2}.}
\label{f6}
\end{figure}

Then, the critical manifolds of $f$ are those of $\widetilde{f}$ except that the minimal points $\widetilde{m}$ of $\widetilde{f}$ are replaced by these small manifolds $m$ and that there is an additional local maximum inside each of these manifolds. In particular, $f$ is a Morse--Bott function and satisfies Assumptions \ref{hyp0} and \ref{hyp1}. Moreover, the labeling procedure at the end of Section \ref{s13} is the same for $\widetilde{f}$ and $f$ (except that $\widetilde{m}$ is replaced by $m$), showing that Assumption \ref{hyp2} holds.

Let $\lambda ( m , h )$ (resp. $\widetilde{\lambda} ( \widetilde{m} , h)$) denote the exponentially small eigenvalues of $\Delta_{f}$ (resp. $\Delta_{\widetilde{f}})$. Theorem \ref{e12} provides the relation
\begin{equation}
\lambda ( m , h ) \sim \alpha_{m} h^{\frac{d - 1}{2}} \widetilde{\lambda} ( \widetilde{m} , h ) ,
\end{equation}
for some constant $\alpha_{m} \in ] 0 , + \infty [$. This discussion is still valid if we only assume that the minima (and not all the critical manifolds) of $\widetilde{f}$ are points.
\end{example}

The plan of the paper is the following. In the next section, we give a proof of Theorem \ref{e14}. Section \ref{s3} is devoted to microlocal constructions near the saddle submanifolds. These constructions are used in Section \ref{s4} to define locally the  quasimodes. In Section \ref{s5} we  glue these quasimodes with parts of the global Gibbs state to construct global quasimodes.  In the last section, we build the interaction matrix and prove Theorem \ref{e12}.

\section{Proof of Theorem \ref{e14}}\label{s2}

This result is a consequence of the works of Helffer and Sj\"ostrand \cite{HeSj84_01,HeSj86_01,HeSj87_01}. Following these papers, we introduce the Agmon metric $\vert \nabla f ( x ) \vert^{2} d x^{2}$, where $d x^{2}$ denotes the Euclidean metric on $\R^{d}$, and let $d_{\rm Ag} ( x , y )$ be the associated degenerate distance on $\R^d$. Given $\Gamma \in \mathcal{U}$, the Agmon distance to $\Gamma$ is defined by
\begin{equation}\label{e15}
\varphi_{\Gamma} ( x ) = d_{\rm Ag} ( x , \Gamma ) : = \inf_{y \in \Gamma} d_{\rm Ag} ( x , y ) .
\end{equation}
Recall that $\varphi_{\Gamma}$ is a non-negative smooth function in a neighborhood of $\Gamma$ which vanishes exactly at the order $2$ on $\Gamma$ and satisfies $\vert \nabla \varphi_{\Gamma} \vert^{2} = \vert \nabla f \vert^{2}$ (see for instance Section $0$ of \cite{HeSj86_01}). Let $( M_{\Gamma} )_{\Gamma \in \mathcal{U}}$ be a family of small compact neighborhoods of $\Gamma \in \mathcal{U}$ and consider the self-adjoint realization $P_{M_{\Gamma}}$ of $\Delta_{f}$ on $M_{\Gamma}$ with Dirichlet boundary conditions.

Let $\Gamma\in \uuu$ be a critical submanifold. Suppose first that $\Gamma \in \uuu^{( 0 )}$. In that case, $\varphi_{\Gamma} = f - f ( \Gamma )$ in a neighborhood of $\Gamma$ and then $\Delta \varphi_{\Gamma} - \Delta f = 0$ near $\Gamma$. Thus, applying Theorem 2.3 of \cite{HeSj87_01} with $E_{0} = E_{1} = 0$ and $E_{2} : = \inf ( \sigma ( P_{\Gamma} ) ) = 0$, there exist $\eta_{\Gamma} , h_{\Gamma} > 0$ such that for all $h \in ] 0 , h_{\Gamma} ]$, the spectrum of $P_{M_{\Gamma}}$ in $] - \infty , \eta_{\Gamma} h^{2} ]$ is reduced to a simple eigenvalue $\lambda_{\Gamma} ( h )$. Here, the operator $P_{\Gamma} = \nabla_{\Gamma}^{( \widetilde{*} )} \nabla_{\Gamma}$ is defined in equation (1.5) of \cite{HeSj87_01} using $\Delta \varphi_{\Gamma} - \Delta f = 0$. Moreover, for $\chi \in C_{0}^{\infty} ( M_{\Gamma} )$ with $\chi = 1$ near $\Gamma$, one has 
\begin{equation*}
P_{M_ {\Gamma}} ( \chi e^{- ( f - f ( \Gamma ) ) / h} ) = \Delta_{f} ( \chi e^{- ( f - f ( \Gamma ) / h} ) = \ooo ( e^{- c / h} ) \qquad \text{and} \qquad \Vert \chi e^{- ( f - f ( \Gamma ) ) / h} \Vert \gtrsim h^{\frac{d - d_{\Gamma}}{4}} ,
\end{equation*}
for some constant $c > 0$ and $h > 0$ small enough. Consequently, $\lambda_{\Gamma} ( h )$ is exponentially small with respect to $h$, that is $\lambda_{\Gamma} ( h ) = \ooo ( e^{- c / h} )$ for some $c > 0$.

Suppose now that $\Gamma \in \uuu \setminus \uuu^{( 0 )}$ and set $T : = \min_{x \in \Gamma} ( \Delta \varphi_{\Gamma} - \Delta f ) ( x )$. Using $\vert \nabla \varphi_\Gamma \vert^{2} = \vert \nabla f \vert^{2}$ near $\Gamma$ and $\hess \varphi_\Gamma \geq 0$ on $\Gamma$, we deduce $\hess \varphi_{\Gamma} ( x ) = \vert \hess f ( x ) \vert$ for all $x \in \Gamma$. Since $\Delta g = {\rm tr} \hess g$, it yields 
\begin{equation*}
T = 2 \min_{x \in \Gamma} \sum_{\substack{\mu ( x ) \in \sigma ( \hess f ( x ) ) \\ \mu ( x ) < 0}} \vert \mu ( x ) \vert > 0 .
\end{equation*}
The Melin--H\"ormander inequality, more precisely Proposition 2.1 of \cite{HeSj86_01}, applied to the operator $Q := \Delta_{f} - h T$ gives 
\begin{equation*}
\< Q u , u \> \geq h^{3} \Vert \nabla u \Vert^{2} - C h^{2} \Vert u \Vert^{2} \geq - C h^{2} \Vert u \Vert^{2} .
\end{equation*}
Thus, there exists $h_{\Gamma} > 0$ such that
\begin{equation*}
\forall u \in C_{0}^{\infty} ( M_{\Gamma} ) , \qquad \< P_{M_{\Gamma}} u , u \> \geq \frac{h T}{2} \Vert u \Vert^{2} \geq h^{2} \Vert u \Vert^{2} ,
\end{equation*}
and hence $\sigma ( P_{M_{\Gamma}} ) \subset [ h^{2} , + \infty [$ for all $h \in ] 0 , h_{\Gamma} ]$.

Define the constants $h_{0} : = \min_{\uuu} h_{\Gamma} > 0$ and $\eta_{0} : = \min ( \min_{\uuu^{( 0 )}} \eta_{\Gamma} , 1 ) / 2 > 0$. The previous paragraphs yield
\begin{equation*}
\sigma ( P_{M_{\Gamma}} ) \cap \big] - \infty , 2 \eta_{0} h^{2} \big] =
\left\{\begin{aligned}
&\{ \lambda_{\Gamma} ( h ) \} \quad &&\text{if } \Gamma \in \mathcal{U}^{( 0 )} ,   \\
&\emptyset &&\text{if } \Gamma \in \uuu \setminus \uuu^{( 0 )} ,
\end{aligned}\right.
\end{equation*}
for all $h \in ] 0 , h_{0} ]$. To conclude the proof it suffices to apply Theorem 2.4 of \cite{HeSj84_01} (see also Remark 2.4 of \cite{HeSj87_01}) which states that for sufficiently small $h$, the spectrum of $\Delta_{f}$ in $] - \infty , \eta_{0} h^{2} ]$ counting multiplicities is exponentially close to the union of the spectra of $P_{M_{\Gamma}}$. This ends the proof of Theorem \ref{e14}.

\section{Geometrical study near the critical manifolds}\label{s3}

In this section we prove Proposition \ref{a23} together with topological results on separating saddle manifolds needed in our construction of the quasimodes. We start with the following elementary result.

\begin{lemma}\sl \label{a24}
Let $\varphi$ be a local diffeomorphism of $\R^{d}$ defined in a neighborhood of $a\in \mathbb R^d$. Then, there exists $r_{a} > 0$ such that, for all $b \in B ( a , r_{a} )$ and $0 < r < r_{a}$, the set $\varphi ( B (b , r ) )$ is star-shaped with respect to $\varphi ( b)$.
\end{lemma}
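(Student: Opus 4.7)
The plan is to reduce the star-shapedness of $\varphi(B(b,r))$ to the forward invariance of $B(b,r)$ under the vector field obtained by pulling back, via $\varphi$, the linear contraction $Y(y) = \varphi(b) - y$ on $\R^d$ pointing toward $\varphi(b)$. First I would pick $r_a > 0$ so small that $\varphi$ is a diffeomorphism on $B(a, 2r_a)$ and $\|\varphi'(x) - \varphi'(a)\| \leq \epsilon$ on that ball, for some $\epsilon > 0$ fixed small enough below. For $b \in B(a, r_a)$ and $0 < r < r_a$, one has $\overline{B(b,r)} \subset B(a, 2r_a)$. The flow of $Y$, namely $F_t(y) = \varphi(b) + e^{-t}(y - \varphi(b))$, sweeps out as $t \geq 0$ grows the whole segment from $y$ to $\varphi(b)$; hence a set containing $\varphi(b)$ is star-shaped with respect to $\varphi(b)$ if and only if it is invariant under $(F_t)_{t \geq 0}$. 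The pullback of $Y$ by $\varphi$ reads
\[
X(x) = -(\varphi'(x))^{-1}\bigl(\varphi(x) - \varphi(b)\bigr)
\]
on $B(b,r)$, and invariance of $\varphi(B(b,r))$ under $(F_t)$ is equivalent to invariance of $B(b,r)$ under the flow of $X$. A standard flow argument reduces this in turn to the strict inward-pointing condition $\langle X(x), x - b \rangle < 0$ for every $x \in S(b,r)$.

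The remaining quantitative step is to check this sign. Writing $\varphi(x) - \varphi(b) = \varphi'(a)(x - b) + R_1(x,b)$ with $|R_1(x,b)| \leq \epsilon |x-b|$ (by integrating $\varphi' - \varphi'(a)$ along the segment from $b$ to $x$), and $(\varphi'(x))^{-1} = \varphi'(a)^{-1} + R_2(x)$ with $\|R_2(x)\| = \mathcal{O}(\epsilon)$ (Neumann series around $\varphi'(a)$, legitimate provided $\epsilon \|\varphi'(a)^{-1}\| < 1$), multiplying gives
\[
(\varphi'(x))^{-1}\bigl(\varphi(x) - \varphi(b)\bigr) = (x - b) + \mathcal{O}(\epsilon)\, |x - b| ,
\]
with implicit constants depending only on $\|\varphi'(a)\|$ and $\|\varphi'(a)^{-1}\|$. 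Taking the inner product with $x - b$ at a point of $S(b,r)$ therefore produces $r^2(1 + \mathcal{O}(\epsilon))$, which is positive for $\epsilon$ small enough, i.e. $\langle X(x), x - b \rangle < 0$ as required. Fixing such $\epsilon$ and the associated $r_a$ once and for all closes the argument: $B(b,r)$ is forward-invariant under the flow of $X$, hence $\varphi(B(b,r))$ is forward-invariant under $(F_t)_{t \geq 0}$, hence star-shaped with respect to $\varphi(b)$.

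The main obstacle is that a naive direct approach fails. Trying to estimate the preimage curve $\gamma(t) = \varphi^{-1}(\varphi(b) + t(\varphi(x) - \varphi(b)))$ and to show directly that $|\gamma(t) - b| < r$ by Taylor expanding $\varphi$ and $\varphi^{-1}$ at $a$ only yields a bound of the form $(1 + \mathcal{O}(\epsilon)) r$, which exceeds $r$ no matter how small $\epsilon$ is chosen; this is intrinsic since $\|\varphi'(a)\| \cdot \|\varphi'(a)^{-1}\| \geq 1$. The flow-based reformulation bypasses this loss precisely because the inward-pointing inequality only needs to be verified at points with $|x - b| = r$, where the first-order Taylor expansion of $(\varphi'(x))^{-1}(\varphi(x) - \varphi(b))$ around $x - b$ is sharp enough to give a strict sign.
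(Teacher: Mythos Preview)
Your proof is correct and takes a genuinely different route from the paper's. You reduce star-shapedness to forward invariance of $B(b,r)$ under the pulled-back contraction field $X$, then verify the strict inward-pointing condition on $S(b,r)$ via a first-order Taylor expansion. The paper instead works directly with the preimage curve $\gamma(t) = \varphi^{-1}\bigl(\varphi(b) + t(\varphi(b+x) - \varphi(b))\bigr)$ that you mention in your last paragraph: setting $g(t) = |\gamma(t) - b|^2$, a Taylor expansion gives $\partial_t g(t) = 2t|x|^2 + \mathcal{O}(t|x|^3)$, so $g$ is nondecreasing for $r_a$ small and $g(t) \leq g(1) = |x|^2 < r^2$ for every $t \in [0,1]$. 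Thus the ``naive direct'' approach you dismiss does in fact succeed, via this monotonicity trick rather than a crude pointwise bound on $|\gamma(t)-b|$. The two arguments rest on essentially the same first-order estimate (your identity $(\varphi'(x))^{-1}(\varphi(x)-\varphi(b)) = (x-b) + \mathcal{O}(\epsilon)|x-b|$ is exactly what forces $\partial_t g \geq 0$); the paper's version is slightly shorter because it avoids the flow-invariance apparatus, while yours offers a cleaner geometric picture and localizes the computation to the boundary sphere.
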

\begin{proof}
We have to show that, for all $b \in B ( a , r_{a} ) $, $x \in B ( 0 , r )$ and $t \in [ 0 , 1 ]$, the point $( 1 - t ) \varphi ( b ) + t \varphi ( b + x )$ belongs to $\varphi ( B ( b , r ) )$. In other words,
\begin{equation*}
\forall t \in [ 0 , 1 ] , \qquad g( t ) := \big\vert \varphi^{- 1} \big( \varphi ( b ) + t ( \varphi ( b + x ) - \varphi ( b ) ) \big) - b \big\vert^{2} \in [ 0 , r^{2} [ .
\end{equation*}
On one hand, $g( 0 ) = 0$ and $g ( 1 ) = \vert x \vert^{2} < r^{2}$. On the other hand, the Taylor formula implies
\begin{align*}
\partial_{t} g ( t ) &= 2 \big\< \partial_{t} \varphi^{- 1} \big( \varphi ( b ) + t ( \varphi ( b + x ) - \varphi ( b ) ) \big) , \varphi^{- 1} \big( \varphi ( b ) + t ( \varphi ( b + x ) - \varphi ( b ) ) \big) - b \big\>   \\
&= 2 \big\< d_{\varphi ( b ) + t ( \varphi ( b + x ) - \varphi ( b ) )} \varphi^{- 1} ( \varphi ( b + x ) - \varphi ( b ) ) , \varphi^{- 1} \big( \varphi ( b ) + t ( \varphi ( b + x ) - \varphi ( b ) ) \big) - b \big\>    \\
&= 2 \big\< d_{\varphi ( b ) + \ooo ( t x )} \varphi^{- 1} ( d_{b} \varphi ( x ) + \ooo ( x^{2} ) ) , \varphi^{- 1} \big( \varphi ( b ) + t d_{b} \varphi ( x ) + \ooo ( t x^{2} ) \big) - b \big\>   \\
&= 2 \big\< d_{\varphi ( b )} \varphi^{- 1} d_{b} \varphi ( x ) + \ooo ( x^{2} ) , t d_{\varphi ( b )} \varphi^{- 1} d_{b} \varphi ( x ) + \ooo ( t x^{2} ) \big\>  \\
&= 2 t x^{2} + \ooo ( t x^{3} ) .
\end{align*}
Thus, for $r_{a} > 0$ small enough, we get $\partial_{t} g( t ) \geq 0$. Summing up, $g$ is non-decreasing and $g ( t ) \in [ g ( 0 ) , g ( 1 ) ] \subset [ 0 , r^{2} [$ for all $t \in [ 0 , 1 ]$.
\end{proof}

\begin{proof}[Proof of Proposition \ref{a23}]
For simplicity, we assume that $\sigma=0$. Let us first consider the case $\Gamma \in \uuu^{( j )}$ with $j \geq 2$. Under Assumption \ref{hyp1}, for all $a \in \Gamma$, there exists a diffeomorphism $\varphi$ of $\R^{d}$ from a neighborhood of $a$ to a neighborhood of $0$ such that $f \circ \varphi^{- 1}$ takes the form \eqref{a3}. Then, $\varphi ( X_{\sigma} )$ writes $\{ - y_{-}^{2} + y_{+}^{2} < 0 \}$ near $y = 0$. Let $r_{a} > 0$ be given by Lemma \ref{a24}. We now prove that
\begin{equation} \label{a25}
\forall b \in \Gamma \cap B ( a , r_{a} ) , \ \forall 0 < r < r_{a} , \qquad X_{\sigma} \cap B ( b , r ) \text{ is connected.}
\end{equation}
On one hand, consider $z \in \varphi ( X_{\sigma} \cap B ( b , r ) )$. From Lemma \ref{a24}, the expression of $\varphi ( X_{\sigma} )$ and $\varphi ( b ) \in \{ y_{-} = 0 \text{ and } y_{+} = 0 \}$, the segment $] \varphi ( b ) , z ]$ is included in $\varphi ( X_{\sigma} \cap B ( b , r ) )$. On the other hand, since $j \geq 2$, there are at least two variables $y_{-}$ and the set $\{ - y_{-}^{2} + y_{+}^{2} < 0 \}$ has a connected neighborhood of $\varphi ( b )$. These two arguments imply that $\varphi ( X_{\sigma} \cap B ( b , r ) )$ is connected and eventually \eqref{a25} holds.

Since $\Gamma$ is compact and $\Gamma \subset \cup_{a \in \Gamma} B ( a , r_{a} )$, there exists a finite number of $a_{j} \in \Gamma$ such that $\Gamma \subset \cup_{j} B ( a_{j} , r_{a_{j}} )$. Setting $\widetilde{r} = \min_{j} r_{a_{j}}$, \eqref{a25} gives
\begin{equation} \label{a26}
\forall b \in \Gamma , \ \forall 0 < r < \widetilde{r} , \qquad X_{\sigma} \cap B ( b , r ) \text{ is connected.}
\end{equation}
Let $a_{0} \in \Gamma$, $0 < r < \widetilde{r}$ and $A$ be the connected component of $X_{\sigma} \cap ( \Gamma + B ( 0 , r ) )$ containing $X_{\sigma} \cap B ( a_{0} , r )$. We also define $B = \overline{A} \cap \Gamma$. If $b \in B$, then $A$ intersects $X_{\sigma} \cap B ( b , r )$ and eventually $X_{\sigma} \cap B ( b , r ) \subset A$ from \eqref{a26}. Thus, the expression of $\varphi ( X_{\sigma} )$ gives that $B$ is a neighborhood of $b$ in $\Gamma$ showing that $B$ is open in $\Gamma$. Since $B$ is also closed (in $\Gamma$), $a_{0} \in B \neq  \emptyset$ and $\Gamma$ is connected, we obtain
\begin{equation} \label{a30}
B = \Gamma .
\end{equation}
Then, the argument below \eqref{a26} yields that $X_{\sigma} \cap B ( b , r ) \subset A$ for all $b \in \Gamma$. In other words, $A = X_{\sigma} \cap ( \Gamma + B ( 0 , r ) )$ which is connected and $i)$ follows.

Assume now that $\Gamma \in \uuu^{( 1 )}$. As before, for all $a \in \Gamma$, there exists a local diffeomorphism $\varphi$ near $a$ such that $f \circ \varphi^{- 1}$ takes the form \eqref{a3}, and $\varphi ( X_{\sigma} )$ writes $\{ - y_{-}^{2} + y_{+}^{2} < 0 \}$ near $y = 0$. Let $r_{a} > 0$ be given by Lemma \ref{a24}. We have
\begin{equation} \label{a27}
\begin{gathered}
\forall b \in \Gamma \cap B ( a , r_{a} ) , \ \forall 0 < r < r_{a} , \qquad X_{\sigma} \cap B ( b , r ) \text{ has two connected} \\
\text{components } A_{\pm} ( b , r ) = \varphi^{- 1} \big( \{ - y_{-}^{2} + y_{+}^{2} < 0 \} \cap \{ \pm y_{-} > 0 \} \big) \cap B ( b , r ) .
\end{gathered}
\end{equation}
The proof of \eqref{a27} is similar to that of \eqref{a25}, the difference is that there is now only one variable $y_{-}$. Moreover, the expression of $A_{\pm}$ gives that
\begin{equation} \label{a28}
\forall b \in \Gamma \cap B ( a , r_{a} ) , \ \forall 0 < r < r_{a} , \qquad \overline{A_{\pm} ( b , r )} \cap \Gamma \text{ is a neighborhood of } b \text{ in } \Gamma .
\end{equation}

As above \eqref{a26}, there exists a finite number of $a_{j} \in \Gamma$ such that $\Gamma \subset \cup_{j} B ( a_{j} , r_{a_{j}} )$. Noting $\widetilde{r} = \min_{j} r_{a_{j}}$, \eqref{a27} gives
\begin{equation} \label{a31}
\begin{gathered}
\forall b \in \Gamma , \ \forall 0 < r < \widetilde{r} , \qquad X_{\sigma} \cap B ( b , r ) \text{ has two connected} \\
\text{components } A_{\pm} ( b , r ) = \varphi^{- 1} \big( \{ - y_{-}^{2} + y_{+}^{2} < 0 \} \cap \{ \pm y_{-} > 0 \} \big) \cap B ( b , r ) .
\end{gathered}
\end{equation}
Let $a_{0} \in \Gamma$, $0 < r < \widetilde{r}$ and $A_{\pm} ( r )$ be the connected component of $X_{\sigma} \cap ( \Gamma + B ( 0 , r ) )$ containing $A_{\pm} ( a_{0} , r )$. Following the proof of \eqref{a30} and using \eqref{a28} and \eqref{a31}, we get
\begin{equation} \label{a29}
\Gamma \subset \overline{A_{+} ( r )} \cap \overline{A_{-} ( r )} .
\end{equation}
We now show that
\begin{equation} \label{a32}
X_{\sigma} \cap ( \Gamma + B ( 0 , r ) ) = A_{+} ( r ) \cup A_{-} ( r ) .
\end{equation}
By definition $A_{\pm} ( r ) \subset X_{\sigma} \cap ( \Gamma + B ( 0 , r ) )$. On the other hand, let $z \in X_{\sigma} \cap ( \Gamma + B ( 0 , r ) )$. There exists $b \in \Gamma$ such that $z \in X_{\sigma} \cap B ( b , r )$. By \eqref{a31}, there exists a sign $\varepsilon \in \{ + , - \}$ such that $z \in A_{\varepsilon} ( b , r )$. Let $C$ be the connected component of $X_{\sigma} \cap ( \Gamma + B ( 0 , r ) )$ containing $A_{\varepsilon} ( b , r )$. As in \eqref{a29}, $a_{0} \in \Gamma \subset \overline{C}$. From \eqref{a31}, $C$ intersects $A_{\delta} ( a_{0} , r )$ for some $\delta \in \{ + , - \}$. Since this last set is connected, $A_{\delta} ( a_{0} , r ) \subset C$, $C = A_{\delta} ( r )$ and finally $z \in A_{+} ( r ) \cup A_{-} ( r )$. This concludes the proof of \eqref{a32}.

From \eqref{a32}, we know that $X_{\sigma} \cap ( \Gamma + B ( 0 , r ) )$ has at most two connected components $A_{+} ( r )$ and $A_{-} ( r )$ for $0 < r < \widetilde{r}$. Note that, if $A_{+} ( r_{0} ) = A_{-} ( r_{0} )$ for some $0 < r_{0} < \widetilde{r}$, then $A_{+} ( r ) = A_{-} ( r )$ for all $r_{0} < r < \widetilde{r}$ since $\emptyset \neq A_{+} ( r_{0} ) \subset A_{\pm} ( r )$. Thus, either $A_{+} ( r ) = A_{-} ( r )$ for all $r > 0$ small enough or $A_{+} ( r ) \neq A_{-} ( r )$ for all $r > 0$ small enough. Combined with \eqref{a29}, this shows $ii)$.
\end{proof}

The next result is used in the construction of quasimodes.

\begin{prop}\sl \label{a1}
Let $\Gamma \in \uuu^{( 1 )}_{\rm sep}$ and $\mu ( x )$ be the unique negative eigenvalue of $\hess f ( x )$ for $x \in \Gamma$. There exist $r > 0$ and a smooth map 
\begin{equation*}
\nu : \Gamma \longrightarrow \S^{d - 1} ,
\end{equation*}
such that

$i)$ for all $x \in \Gamma$, $\nu ( x ) \in \ker ( \hess f ( x ) - \mu ( x ) )$.

$ii)$ for all $s \in ] 0 , r ]$ and $x \in \Gamma$, $x \pm s \nu ( x ) \in B_{\pm}$.
\end{prop}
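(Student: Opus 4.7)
The plan is to construct $\nu$ as a globally smooth unit section of the ``negative eigenspace'' line bundle over $\Gamma$, exploiting the separating property of $\Gamma$ to fix a coherent sign. Since $\Gamma \in \uuu^{(1)}$, the eigenvalue $\mu(x)$ of $\hess f(x)$ is simple for every $x \in \Gamma$, so standard perturbation theory ensures that $\mu$ and the line $L(x) := \ker(\hess f(x) - \mu(x))$ depend smoothly on $x$, giving a smooth rank-one sub-bundle $L$ of $\Gamma \times \R^{d}$. Any map $\nu$ fulfilling $(i)$ is exactly a smooth unit section of $L$.

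For the local picture, fix $a \in \Gamma$ and pick a Morse--Bott chart $\varphi$ around $a$ as in Lemma \ref{a36}. Since $f \circ \varphi^{-1}(y) = -|y_{-}|^{2} + |y_{+}|^{2}$, the congruence $\hess(f \circ \varphi^{-1})(0) = D\varphi^{-1}(0)^{T} \hess f(a) D\varphi^{-1}(0)$, combined with the fact that $\hess f(a)$ has exactly one negative eigenvalue, yields $D\varphi(a) \cdot L(a) = \R e_{y_{-}}$. Using $\nabla f(a) = 0$, a Taylor expansion gives
\begin{equation*}
f(a + se) = f(a) + \tfrac{s^{2}}{2} \mu(a) + O(s^{3}) < \sigma
\end{equation*}
for any unit $e \in L(a)$ and small $s > 0$, so $a + se \in X_{\sigma}$, and the $y_{-}$-coordinate of $\varphi(a + se)$ equals $s \alpha + O(s^{2})$ with $\alpha \neq 0$. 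Comparing with the explicit description of $A_{\pm}(a,r)$ as $\{\pm y_{-} > 0\}$ recorded in \eqref{a31}, I conclude that for small $s > 0$ the point $a + se$ lies in exactly one of $A_{\pm}(a,r)$, the two signs of $e$ producing the two different components.

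Now I exploit that $\Gamma$ is separating. By Lemma \ref{a4}, there are precisely two connected components $B_{+}$ and $B_{-}$ of $X_{\sigma}$ satisfying $\Gamma \subset \overline{B_{+}} \cap \overline{B_{-}}$, and the local components $A_{\pm}(a,r)$ are contained in $B_{\pm}$ respectively (after relabeling $A_{\pm}$ at each point so that the $+$ label corresponds to $B_{+}$). I then define $\nu(x)$ as the unique unit vector in $L(x)$ such that $x + s \nu(x) \in B_{+}$ for all sufficiently small $s > 0$; reversing the sign automatically puts $x - s \nu(x) \in B_{-}$, giving $(ii)$ pointwise. This is globally well-defined precisely because $B_{+}$ and $B_{-}$ are disjoint. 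For smoothness, given any connected open $U \subset \Gamma$ admitting a smooth local unit section $\nu_{0}$ of $L$, the continuous map $x \mapsto x + s_{0} \nu_{0}(x)$ with $s_{0} > 0$ fixed small has connected image in the open set $X_{\sigma}$, so this image lies entirely in $B_{+}$ or entirely in $B_{-}$; hence $\nu = \pm \nu_{0}$ on $U$ with locally constant sign, which proves $\nu$ smooth. Finally, a covering and compactness argument on $\Gamma$ yields a uniform $r > 0$ as required.

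The main obstacle is the global coherence of the sign choice, equivalently the orientability of $L$. This is precisely where the separating hypothesis is essential and not replaceable by local separation alone: in Example \ref{e5}, the negative direction rotates by $\pi$ as one loops around $\Gamma$, so $L$ is non-orientable (Möbius-type) and no global smooth unit section exists, consistent with the fact that the local components $A_{+}(a,r)$ and $A_{-}(a,r)$ there connect to each other globally in a tubular neighborhood of $\Gamma$.
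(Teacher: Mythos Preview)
Your proof is correct and follows essentially the same approach as the paper: both arguments exploit the smooth dependence of the one-dimensional negative eigenspace, use a Taylor expansion to place $x+s e$ in $X_\sigma$ for small $s\neq 0$, and then invoke the globally disjoint components $B_+,B_-$ of Lemma~\ref{a4} to fix a coherent sign, with smoothness following from local constancy of this sign on connected charts. One small inaccuracy: the congruence $\hess(f\circ\varphi^{-1})(0)=D\varphi^{-1}(0)^{T}\hess f(a)\,D\varphi^{-1}(0)$ preserves signature but not eigenspaces, so in general $D\varphi(a)\cdot L(a)\neq \R e_{y_-}$; what you actually need (and what signature considerations do give) is only that the $y_-$-component of $D\varphi(a)e$ is nonzero, which is precisely the $\alpha\neq 0$ you use afterwards.
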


In some sense, this result says that any manifold $\Gamma \in \uuu^{( 1 )}_{\rm sep}$ is ``negatively orientable'': even if $\Gamma $ is non-orientable, it admits a global smooth normal vector field of eigenvectors of $\hess f$ associated to its negative eigenvalue. To construct a non-orientable element of $\uuu^{( 1 )}_{\rm sep}$, one can consider a non-orientable smooth boundaryless compact submanifold $M \subset \R^{d - 1}$ and $f ( x ) = - x_{1}^{2} + \dist ( ( x_{2} , \ldots , x_{n} ) , M )^{2}$ near $\Gamma = \{ 0 \} \times M$. One can also deduce from the proof of Proposition \ref{a1} that there is only one $\nu ( x ) \in \S^{d}$ satisfying $i)$ and $ii)$. This result may also hold for locally separating manifolds, but is only needed in the sequel for elements of $\uuu^{( 1 )}_{\rm sep}$.

\begin{proof}[Proof of Proposition \ref{a1}]
Since $\Gamma \in \uuu^{( 1 )}$, $\hess f ( x )$ has a unique negative eigenvalue $\mu ( x )$ and $\ker ( \hess f ( x ) - \mu ( x ) )$ is a one dimensional vector space for all $x \in \Gamma$. Let $\widetilde{\nu} ( x )$ be (any) normalized element of $\ker ( \hess f ( x ) - \mu ( x ) )$. By Taylor's formula,
\begin{align*}
f ( x + s \widetilde{\nu} ( x ) ) &= f ( \Gamma ) + \frac{s^{2}}{2} \big\< \hess f ( x ) \widetilde{\nu} ( x ) , \widetilde{\nu} ( x ) \big\> + \ooo ( s^{3} ) \\
&= f ( \Gamma ) + s^{2} \mu ( x ) / 2  + \ooo ( s^{3} ),
\end{align*}
where the $\ooo ( s^{3} )$ is uniform with respect to $x$ since $\Gamma$ is compact. Using again the compactness of $\Gamma$, there exists $r > 0$ such that 
\begin{equation} \label{a5}
f ( x + s \widetilde{\nu} ( x ) ) < f ( \Gamma ) ,
\end{equation}
for all $x \in \Gamma$ and $s \in [ - r , r ]\setminus \{ 0 \}$. From Lemma \ref{a4}, it yields $x + s \widetilde{\nu} ( x ) \in B_{-} \cup B_{+}$. Since $B_{\pm}$ are connected components of $X_{f ( \Gamma )}$, $x + s \widetilde{\nu} ( x )$ stays in the same $B_{\bullet}$ for all $s \in [ - r , 0 [$ and all $s \in ] 0 , r ]$. Moreover, since $f$ takes the form \eqref{a3} near $\Gamma$, $x + s \widetilde{\nu} ( x ) \in B_{+}$ if and only if $x - s \widetilde{\nu} ( x ) \in B_{-}$. Summing up, we can choose $\nu ( x )$ equal to $\pm \widetilde{\nu} ( x )$ for all $x \in \Gamma$ such that $x + s \nu ( x ) \in B_{+}$ and $x - s \nu ( x ) \in B_{-}$ for $s \in ] 0 , r ]$. Eventually, $\nu ( x )$ is $C^{\infty}$ since $\ker ( \hess f ( x ) - \mu ( x ) )$ depends smoothly of $x \in \Gamma$ and the choice of the sign for $\nu ( x )$ respects this regularity by connectedness.
\end{proof}

\section{Local construction of the quasimodes} \label{s4}

In this part, we construct Gaussian quasimodes near the separating saddle manifolds. Such constructions go back to \cite{BoEcGaKl04,DiLe17_01,LePMi20}. Given a separating saddle manifold $\Gamma \in \uuu^{( 1 )}_{\rm sep}$, we look for a solution of the equation $\Delta_{f} u = 0$ in the neighborhood of $\Gamma$ under the form $u ( x ) = v ( x , h ) e^{- f ( x ) / h}$ with
\begin{equation} \label{a6}
v ( x , h ) = \int_{0}^{\ell ( x , h )} \zeta ( s / \tau ) e^{- s^{2} / 2 h} d s ,
\end{equation}
for some function $\ell ( x , h ) \in C^{\infty} ( \R^{d} )$ having a classical expansion $\ell ( x , h ) \sim \sum_{j \geq 0} h^{j} \ell_{j} ( x )$. Here $\zeta$ denotes a fixed smooth even function equal to $1$ on $[ - 1 , 1 ]$ and supported in $[ - 2 , 2 ]$ and $\tau > 0$ is a small parameter which will be fixed later. The object of this section is to construct the function $\ell$.

The following lemma holds by a straightforward computation.

\begin{lemma}[Equations on $\ell$]\sl \label{a7}
We have 
\begin{equation*}
\Delta_{f} ( v e^{- f / h} ) = ( w + r ) e^{- ( f + \frac{\ell^{2}}{2} ) / h} ,
\end{equation*}
where 
\begin{equation*}
w = h \big( 2 \nabla f \cdot \nabla \ell + \vert \nabla \ell \vert^{2} \ell \big) - h^{2} \Delta \ell ,
\end{equation*}
$\supp ( r ) \subset \{ \vert \ell \vert \geq \tau \}$ and $r$ and all its derivatives are uniformly bounded with respect to $h$. Moreover, $w$ admits a classical expansion $w \sim \sum_{j \geq 1} h^{j} w_{j}$ with 
\begin{equation*}
w_{1} = 2 \nabla f \cdot \nabla \ell_{0} + \vert \nabla \ell_{0} \vert^{2} \ell_{0} ,
\end{equation*}
and, for all $j \geq 1$,
\begin{equation*}
w_{j + 1} = 2 \nabla f \cdot \nabla \ell_{j} + 2 \ell_{0} \nabla \ell_{0} \cdot \nabla \ell_{j} + \vert \nabla \ell_{0} \vert^{2} \ell_{j} + R_{j} ( x , \ell_{0} , \ldots , \ell_{j - 1} ) ,
\end{equation*}
with $R_{j} \in C^{\infty} ( \R^{d + j} )$.
\end{lemma}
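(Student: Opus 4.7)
The proof is a direct calculation, so the plan is essentially to organize the computation efficiently.

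First, I would use the standard conjugation identity for the Witten Laplacian. Expanding $\Delta_{f} = -h^{2}\Delta + |\nabla f|^{2} - h\Delta f$ and applying it to $g\, e^{-f/h}$, the $|\nabla f|^{2}$ and $h\Delta f$ contributions produced by $-h^{2}\Delta(g e^{-f/h})$ cancel the potential terms exactly, leaving
$$\Delta_{f}(g e^{-f/h}) = \bigl(-h^{2}\Delta g + 2h\,\nabla f\cdot\nabla g\bigr)e^{-f/h}.$$
This reduces everything to computing $\nabla v$ and $\Delta v$.

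Next, since $v$ is defined as an integral from $0$ to $\ell(x,h)$ of a function of $s$, the fundamental theorem of calculus and the chain rule give
$$\nabla v = \zeta(\ell/\tau)\, e^{-\ell^{2}/(2h)}\,\nabla\ell,$$
and a further divergence yields
$$\Delta v = \left[\tfrac{1}{\tau}\zeta'(\ell/\tau)|\nabla\ell|^{2} - \tfrac{\ell}{h}\zeta(\ell/\tau)|\nabla\ell|^{2} + \zeta(\ell/\tau)\Delta\ell\right]e^{-\ell^{2}/(2h)}.$$
Plugging these into the conjugation formula, I would factor out $e^{-(f+\ell^{2}/2)/h}$, which combines the $e^{-f/h}$ from the conjugation with the Gaussian $e^{-\ell^{2}/(2h)}$ from the derivatives of $v$. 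Writing $\zeta(\ell/\tau) = 1 + (\zeta(\ell/\tau)-1)$ in the resulting bracket cleanly separates the main term
$$w = h\bigl(2\nabla f\cdot\nabla\ell + |\nabla\ell|^{2}\ell\bigr) - h^{2}\Delta\ell$$
from the remainder
$$r = (\zeta(\ell/\tau)-1)\bigl[h(2\nabla f\cdot\nabla\ell + |\nabla\ell|^{2}\ell) - h^{2}\Delta\ell\bigr] - \tfrac{h^{2}}{\tau}\zeta'(\ell/\tau)|\nabla\ell|^{2}.$$
Since $\zeta-1$ vanishes on $[-1,1]$ and $\zeta'$ is supported in $\{1\le|s|\le 2\}$, both prefactors vanish on $\{|\ell|\le\tau\}$, giving $\supp(r)\subset\{|\ell|\ge\tau\}$; smoothness and uniform boundedness of $r$ and its derivatives in $h$ follow from the smoothness of $\zeta$ and the fact that $\nabla f$, $\nabla\ell$ and $\Delta\ell$ are smooth with classical expansions bounded on every compact set.

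Finally, to get the expansion $w \sim \sum_{j\ge 1}h^{j}w_{j}$, I would substitute $\ell \sim \sum_{k\ge 0}h^{k}\ell_{k}$ into each term of $w$ and collect powers of $h$. The coefficient $w_{1}$ is read off by setting $\ell = \ell_{0}$ and dropping the $-h^{2}\Delta\ell$ term. For $w_{j+1}$ with $j\ge 1$, the linear pieces $2\nabla f\cdot\nabla\ell$ and $-h\Delta\ell$ contribute $2\nabla f\cdot\nabla\ell_{j}$ and a lower-order $-\Delta\ell_{j-1}$ respectively, while the cubic term $\ell|\nabla\ell|^{2} = \sum_{a+b+c = \,\cdot}\ell_{a}\nabla\ell_{b}\cdot\nabla\ell_{c}$ contributes, among the $\ell_{j}$-linear terms, exactly $|\nabla\ell_{0}|^{2}\ell_{j}$ (from $a=j$, $b=c=0$) and $2\ell_{0}\nabla\ell_{0}\cdot\nabla\ell_{j}$ (from $b=j,c=0$ and $c=j,b=0$, each with $a=0$); all remaining contributions involve only $\ell_{0},\ldots,\ell_{j-1}$ and collect into the smooth function $R_{j}(x,\ell_{0},\ldots,\ell_{j-1})$. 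There is no real obstacle here; the only delicate point is the bookkeeping of which indices in the cubic expansion produce terms linear in $\ell_{j}$, which is what singles out the form of $w_{j+1}$ in the statement.
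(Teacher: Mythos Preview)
Your computation is correct and is exactly the ``straightforward computation'' the paper invokes without writing it out; the paper gives no further proof of this lemma. Your organization---conjugation identity, then $\nabla v$ and $\Delta v$, then splitting via $\zeta=1+(\zeta-1)$, then substituting the expansion of $\ell$---is the natural one and there is nothing to add.
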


As usual in the study of the tunneling effect, we consider 
\begin{equation} \label{a9}
q ( x , \xi ) := - p(x,i\xi) =\xi^{2} - \vert \nabla f ( x ) \vert^{2} ,
\end{equation}
the complexified version of the principal symbol $p(x,\xi) := \xi^2 + \vert \nabla f(x)\vert^2$ of $\Delta_{f}$. For $\Gamma \in \uuu$, the set $\Gamma \times \{ 0 \} \subset T^{*} \R^{d}$ is a manifold of fixed points of the Hamiltonian vector field $H_{q} = \partial_\xi q \cdot \partial_x - \partial_x q \cdot \partial_\xi$. On the cotangent space $T^{*} \Gamma$, $q = \xi^{2}$ and the linearization of $H_{q}$ is the nilpotent matrix
\begin{equation*}
\left( \begin{array}{cc}
0 & 2 \\
0 & 0
\end{array} \right) .
\end{equation*}
On the other hand, Assumption \ref{hyp1} implies that $H_{q}$ is hyperbolic in the normal directions to $T^{*} \Gamma$. Then,  Theorem 1 in Appendix C of \cite{AbRo67_01} provides the existence of the incoming/outgoing manifolds $\Lambda_{\pm}$. They are $d$-dimensional, stable by the Hamiltonian flow and characterized near $\Gamma\times \{ 0 \}$ by
\begin{equation}
\Lambda_{\pm} = \big\{ ( x , \xi ) ; \ \dist \big( \exp ( t H_{q} ) ( x , \xi ) , \Gamma \times \{ 0 \} \big) \to 0 \text{ as } t \to \mp \infty \big\} .
\end{equation}
For $x \in \Gamma$, the tangeant space $T_{( x , 0 )} \Lambda_{+}$ (resp. $T_{( x , 0 )} \Lambda_{-}$) is spanned by the eigenvectors of
\begin{equation*}
\left( \begin{array}{cc}
0 & 2 \\
2 (\hess f )^{2} ( x ) & 0
\end{array} \right) ,
\end{equation*}
associated to the non-negative (resp. non-positive) eigenvalues. In particular, they project nicely on the base space and, as in Lemma 3.3 of \cite{DiSj99_01}, they are Lagrangian manifolds. Thus, there exist smooth functions $\phi_{\pm}$ defined near $\Gamma$ such that
\begin{equation}
\Lambda_{\pm} = \{ ( x , \nabla \phi_{\pm} ( x ) ) ; \ x \in \R^{d} \} .
\end{equation}
In \cite{HeSj86_01,HeSj87_01}, Helffer and Sj\"{o}strand identified the phase $\phi_{+}$ with the Agmon distance to $\Gamma$ defined by \eqref{e15}.

\begin{lemma}\sl \label{a8}
There exists a smooth function $\ell_{0}$ defined in a neighbohood of $\Gamma$ such that
\begin{equation*}
\phi_{+} ( x ) = f ( x ) - f ( \Gamma ) + \frac{\ell_{0}^{2} ( x )}{2} ,
\end{equation*}
\end{lemma}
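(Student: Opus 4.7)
The plan is to recast the claim as the factorization of a non-negative smooth function vanishing to second order along a smooth hypersurface through $\Gamma$. Using the Helffer--Sj\"ostrand identification $\phi_+ = \varphi_\Gamma$ recalled just above, set $g := \phi_+ - (f - f(\Gamma))$, which is smooth near $\Gamma$ and vanishes on $\Gamma$. For any $C^1$ path $\gamma$ joining $\Gamma$ to $x$, pointwise Cauchy--Schwarz yields
\begin{equation*}
\int |\nabla f(\gamma)|\,|\dot\gamma|\,dt \;\geq\; \int \nabla f(\gamma)\cdot \dot\gamma\,dt \;=\; f(x) - f(\Gamma),
\end{equation*}
so $\varphi_\Gamma(x) \geq f(x) - f(\Gamma)$ and hence $g \geq 0$ in a neighborhood of $\Gamma$.

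The key step is to identify $\{g = 0\}$ with the local unstable manifold $M_+$ of $\Gamma$ for the gradient flow $\dot y = \nabla f(y)$. On one hand, every $x \in M_+$ is joined to $\Gamma$ by a reparameterized gradient-ascent trajectory along which $|\nabla f|\,|\dot\gamma| = \nabla f\cdot \dot\gamma$, realizing the equality $\varphi_\Gamma(x) = f(x) - f(\Gamma)$. Conversely, if $g(x) = 0$, then the Agmon minimizing geodesic from $\Gamma$ to $x$ must saturate Cauchy--Schwarz pointwise, forcing $\dot\gamma$ to be colinear with $\nabla f$; hence $x \in M_+$. By the unstable manifold theorem applied to the compact normally hyperbolic critical set $\Gamma$ (non-degeneracy of the transverse Hessian from Assumption \ref{hyp1}), $M_+$ is a smooth hypersurface near $\Gamma$, tangent along $\Gamma$ to $T\Gamma$ together with the positive eigenspace of $\hess f$.

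I now apply Hadamard division. Pick a smooth defining function $\rho$ for $M_+$ in a tubular neighborhood of $\Gamma$, normalized so that $d\rho|_\Gamma$ is dual to the smooth unit normal field $\nu$ provided by Proposition \ref{a1} (smoothly extended to a neighborhood). Since $g \geq 0$ with $g|_{M_+} = 0$, both $g$ and $\nabla g$ vanish on $M_+$, so Hadamard's lemma applied twice gives $g = \rho^2\, g_2$ with $g_2 \in C^\infty$. From the explicit description of $T_{(x,0)}\Lambda_+$ as the non-negative eigenspace of the linearization matrix displayed above --- spanned by vectors $(u,|\mu|u)$ with $\hess f(x)\,u = \mu u$ --- one deduces $\hess \phi_+(x) = |\hess f(x)|$ (in the sense of spectral calculus). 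Consequently, on $\Gamma$,
\begin{equation*}
\hess g = |\hess f| - \hess f = 2|\mu(x)|\,\nu(x)\nu(x)^\top,
\end{equation*}
of rank one with positive eigenvalue $2|\mu(x)|$ along $\nu(x)$. Therefore $g_2|_\Gamma = \tfrac12(\hess g)(\nu,\nu)|_\Gamma = |\mu(x)| > 0$, and by compactness of $\Gamma$ and continuity, $g_2 > 0$ in a full neighborhood. Setting $\ell_0 := \rho\sqrt{2\, g_2}$ gives a smooth function satisfying $\ell_0^2/2 = g$, which is exactly the claimed identity.

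The main obstacle I anticipate is the clean identification $\{g = 0\} = M_+$: it couples a variational statement about Agmon minimizing geodesics (saturation of Cauchy--Schwarz forcing tangency to $\nabla f$, with existence of the minimizer near $\Gamma$ from Helffer--Sj\"ostrand's theory) with the regularity of the unstable manifold of the compact invariant set $\Gamma$. Once these two ingredients are in place, the factorization step via Hadamard's lemma and the computation of the transverse Hessian through $\Lambda_+$ are essentially routine.
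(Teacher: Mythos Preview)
Your argument is correct and its architecture matches the paper's: identify a smooth hypersurface through $\Gamma$ on which $g := \phi_+ - f + f(\Gamma)$ vanishes to second order, check that the transverse Hessian is positive, and extract the square root. The execution differs, though. The paper works in phase space: it restricts $H_q$ to the Lagrangian $\Lambda_f = \{(x,\nabla f(x))\}$, builds the unstable manifold $K_+$ of that restricted flow, and uses the identification $K_+ = \Lambda_+ \cap \Lambda_f$ to deduce $\nabla\phi_+ = \nabla f$ (hence $\phi_+ = f - f(\Gamma)$) on $\pi_x(K_+)$. You instead stay in base space and use the variational inequality $\varphi_\Gamma \geq f - f(\Gamma)$ from Cauchy--Schwarz, together with the gradient trajectories in $M_+$ realizing equality. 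The two hypersurfaces are the same object (since $H_q|_{\Lambda_f}$ projects to $2\nabla f$ on the base, $\pi_x(K_+) = M_+$), so the approaches merge after that point. Your route is slightly more elementary because it avoids the symplectic lift; the paper's route makes the link with the Lagrangian $\Lambda_+$ more explicit.

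One simplification: the ``main obstacle'' you flag --- the converse inclusion $\{g=0\} \subset M_+$ via existence and regularity of Agmon minimizers --- is not actually needed, and the paper does not use it either. Once you have $g \geq 0$ and $g|_{M_+} = 0$, the vanishing of $\nabla g$ on $M_+$ is automatic, and Hadamard gives $g = \rho^2 g_2$ with $g_2$ smooth. Your Hessian computation on $\Gamma$ then forces $g_2 > 0$ in a full neighborhood of $\Gamma$ by continuity, and the converse inclusion follows \emph{a posteriori} from $g = \rho^2 g_2$. So the delicate variational step about minimizing geodesics in a degenerate metric can be dropped.
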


\begin{proof}
This result, based on an observation of \cite[(11.20)]{HeHiSj08-2}, is similar to Lemma 3.2 of \cite{BoLePMi22}. We give its proof for a sake of completeness and to explain why this construction can be made globally around $\Gamma$. Let $\Lambda_{f} : = \{ ( x , \nabla f ( x ) ) ; \ x \in \R^{d} \}\subset T^{*} \R^{d}$ be the Lagrangian manifold associated to the function $f$. From \eqref{a9}, we have $q ( x , \nabla f ( x ) ) = 0$ which implies that $\Lambda_{f}$ is stable by the $H_{q}$ flow. Let $\fff$ denote the Hamiltonian vector field $H_{q}$ restricted to $\Lambda_{f}$. Then, $\Gamma \times \{ 0 \}$ is a manifold of fixed points for $\fff$. Moreover, the linearization of $\fff$ at $( x , 0 )$ with $x \in \Gamma$ is
\begin{equation*}
F_{x} = \left( \begin{array}{cc}
0 & 2 \\
2 ( \hess f )^{2} ( x ) & 0
\end{array} \right) ,
\end{equation*}
on the tangent space of $\Lambda_{f}$ at $( x , 0 )$
\begin{equation*}
T_{( x , 0 )} \Lambda_{f} = \{ ( t , \hess f ( x ) t ) ; \ t \in \R^{d} \}.
\end{equation*}
Since $F_{x}$ acts as $2 \hess f ( x )$, this operator has $d_{\Gamma}$ zero eigenvalues (corresponding to the tangent space of $\Gamma \times \{ 0 \}$), $1$ negative eigenvalue and $d - d_{\Gamma} - 1$ positive eigenvalues.

\begin{figure}
\begin{center}
\begin{picture}(0,0)%
\includegraphics{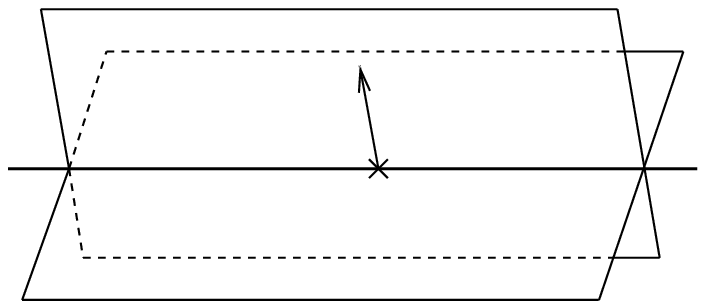}%
\end{picture}%
\setlength{\unitlength}{1184sp}%
\begingroup\makeatletter\ifx\SetFigFont\undefined%
\gdef\SetFigFont#1#2#3#4#5{%
  \reset@font\fontsize{#1}{#2pt}%
  \fontfamily{#3}\fontseries{#4}\fontshape{#5}%
  \selectfont}%
\fi\endgroup%
\begin{picture}(12705,4718)(-6764,-7296)
\put(5926,-5311){\makebox(0,0)[lb]{\smash{{\SetFigFont{9}{10.8}{\rmdefault}{\mddefault}{\updefault}$\Gamma$}}}}
\put(-6599,-3286){\makebox(0,0)[lb]{\smash{{\SetFigFont{9}{10.8}{\rmdefault}{\mddefault}{\updefault}$\pi_{x} ( K_{-} )$}}}}
\put(-6749,-6661){\makebox(0,0)[lb]{\smash{{\SetFigFont{9}{10.8}{\rmdefault}{\mddefault}{\updefault}$\pi_{x} ( K_{+} )$}}}}
\put(601,-5686){\makebox(0,0)[b]{\smash{{\SetFigFont{9}{10.8}{\rmdefault}{\mddefault}{\updefault}$x$}}}}
\put(601,-4336){\makebox(0,0)[lb]{\smash{{\SetFigFont{9}{10.8}{\rmdefault}{\mddefault}{\updefault}$\nu ( x )$}}}}
\end{picture}%
\end{center}
\caption{The geometry in the proof of Lemma \ref{a8}.}
\label{f2}
\end{figure}

Let $K_{\pm}$ be the stable outgoing/incoming submanifold of $\Lambda_{f}$ associated to $\fff$. Then $K_{+}$ (resp. $K_{-}$) has dimension $d - 1$ (resp. $d_{\Gamma} + 1$), $K_{\pm}$ projects nicely on the $x$-space,
\begin{equation} \label{a11}
K_{\pm} = \Lambda_{\pm} \cap \Lambda_{f} \qquad \text{and} \qquad T_{( x , 0 )} K_{\pm} = T_{( x , 0 )} \Lambda_{\pm} \cap T_{( x , 0 )} \Lambda_{f} ,
\end{equation}
for all $x \in \Gamma$. The existence and uniqueness of $K_{\pm}$ follows again from Theorem 1 in Appendix C of \cite{AbRo67_01}. Note that since $\fff$ is normally hyperbolic at $\Gamma \times \{ 0 \}$ (which was not the case for $H_{q}$ on the whole space when $d_{\Gamma} > 0$), we could have used instead the classical result of Hirsch, Pugh and Shub \cite{HiPuSh77_01}. The first equality in \eqref{a11} gives $\nabla \phi_{\pm} = \nabla f$ on $\pi_{x} ( K_{\pm} )$ and then
\begin{equation} \label{a10}
\forall x \in \pi_{x} ( K_{\pm} ) , \qquad \phi_{\pm} ( x ) = f ( x ) - f ( \Gamma ) .
\end{equation}
On the other hand, for all $x \in \Gamma$, we have
\begin{equation*}
\hess ( \phi_{+} - f ) ( x ) = \vert \hess f ( x ) \vert - \hess f ( x ) > 0 ,
\end{equation*}
on $\nu ( x ) \R$, where the vector field $\nu ( x )$ is given by Proposition \ref{a1}. Summing up, $\phi_{+} - f + f ( \Gamma )$ is a non-negative function in a neighborhood of $\Gamma$ which vanishes at order $2$ on $\pi_{x} ( K_{+} )$ (see Figure \ref{f2}).

We now construct a square root of $g = \phi_{+} - f + f ( \Gamma )$ in a neighborhood of $\Gamma$. Let $x$ be a point of $\Gamma$. There exist local coordinates $( y , z ) \in \R^{d - 1} \times \R$ mapping $x$ to $0$ such that $\pi_{x} ( K_{+} ) = \{ ( y , z ) ; \ z = 0 \}$ and such that the last basis vector $( 0 , \ldots , 0 , 1 )$ corresponds to $\nu ( x )$. Near $0$, we have $g ( y , 0 ) = 0$ from \eqref{a10}, $\partial_{z} g ( y , 0 ) = 0$ from \eqref{a11} and $\partial^{2}_{z , z} g ( y , z ) > 0$ from the last sentence of the previous paragraph. Then, the Taylor formula gives
\begin{equation*}
g ( y , z ) = \int_{0}^{1} ( 1 - t ) \partial^{2}_{z , z} g ( y , t z ) \, d t \, z^{2} ,
\end{equation*}
and
\begin{equation} \label{a15}
\phi_{+} = f - f ( \Gamma ) + \frac{\ell_{0 , x}^{2}}{2} \qquad \text{with} \qquad \ell_{0 , x} ( y , z ) = \Big( 2 \int_{0}^{1} ( 1 - t ) \partial^{2}_{z , z} g ( y , t z ) \, d t \Big)^{1 / 2} z.
\end{equation}
Since the quantity under the square root is positive when evaluated in $z = 0$, the function $\ell_{0 , x}$ is smooth in a vicinity of $0$. Coming back to the original variables, we have construct a smooth square root of $g$ in a neighborhood of $x$ which is positive (resp. negative) in the direction $\nu ( x )$ (resp. $- \nu ( x )$). Since $\nu$ is globally defined on the compact manifold $\Gamma$, these local functions glue together and provide a smooth function $\ell_{0}$, defined in a neighborhood of $\Gamma$, which satisfies $\phi_{+} = f - f ( \Gamma ) + \ell_{0}^{2} / 2$.
\end{proof}

\begin{lemma}[Eikonal equation]\sl \label{a12}
For $\Gamma \in \uuu^{( 1 )}_{\rm sep}$, the function $\ell_{0}$ of Lemma \ref{a8} solves
\begin{equation} \label{a13}
2 \nabla f \cdot \nabla \ell_{0} + \vert \nabla \ell_{0} \vert^{2} \ell_{0} = 0 ,
\end{equation}
in a neighborhood of $\Gamma$. Moreover, for all $x \in \Gamma$, the vector $\nabla \ell_{0} ( x )$ is an eigenvector of the matrix $\hess f ( x )$ associated to its unique negative eigenvalue $\mu ( x )$. Eventually,
\begin{equation} \label{a14}
\vert \nabla \ell_{0} ( x ) \vert^{2} = - 2 \mu ( x ) \qquad \text{and} \qquad \det \hess_{\perp} \Big( f + \frac{1}{2} \ell_{0}^{2} \Big) ( x ) = - \det \hess_{\perp} ( f ) ( x ) .
\end{equation}
\end{lemma}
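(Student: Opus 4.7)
The plan is to derive the Eikonal equation directly from the Hamilton--Jacobi identity satisfied by $\phi_+$. Since the Lagrangian manifold $\Lambda_+$ lies in the characteristic set $\{q = 0\}$ with $q(x,\xi) = \xi^2 - |\nabla f(x)|^2$, the phase satisfies $|\nabla \phi_+|^2 = |\nabla f|^2$ in a neighborhood of $\Gamma$. Substituting $\phi_+ = f - f(\Gamma) + \ell_0^2 / 2$ from Lemma \ref{a8} and expanding $|\nabla f + \ell_0 \nabla \ell_0|^2 = |\nabla f|^2$, I obtain
\begin{equation*}
\ell_0 \bigl( 2 \nabla f \cdot \nabla \ell_0 + |\nabla \ell_0|^2 \ell_0 \bigr) = 0 .
\end{equation*}
The construction \eqref{a15} shows that $\{\ell_0 = 0\}$ coincides with $\pi_x(K_+)$, a smooth hypersurface of codimension one. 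Dividing by $\ell_0$ on its complement (which is open and dense) and extending by continuity yields \eqref{a13} everywhere in a neighborhood of $\Gamma$.

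To analyze $\nabla \ell_0$ on $\Gamma$, I would compute $\hess \phi_+(x)$ at $x \in \Gamma$ in two independent ways. On the one hand, $\Gamma \subset \pi_x(K_+)$ together with \eqref{a10} yields $\ell_0(x) = 0$ on $\Gamma$. Differentiating the relation $\nabla \phi_+ = \nabla f + \ell_0 \nabla \ell_0$ at such a point and using $\ell_0(x) = 0$ gives
\begin{equation*}
\hess \phi_+(x) = \hess f(x) + \nabla \ell_0(x) \, \nabla \ell_0(x)^T .
\end{equation*}
On the other hand, the description of $T_{(x,0)} \Lambda_+$ as the span of the non-negative-eigenvalue eigenvectors of the linearized Hamiltonian recalled in the proof of Lemma \ref{a8} shows, by a direct calculation, that to each eigenvector $u$ of $\hess f(x)$ of eigenvalue $\mu_i$ corresponds the tangent vector $(u, |\mu_i|\, u) \in T_{(x,0)} \Lambda_+$. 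Since this tangent space equals the graph of $\hess \phi_+(x)$, this forces the matrix identity $\hess \phi_+(x) = |\hess f(x)|$, where the absolute value is taken eigenvalue by eigenvalue.

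Comparing the two expressions gives
\begin{equation*}
\nabla \ell_0(x) \, \nabla \ell_0(x)^T = |\hess f(x)| - \hess f(x) .
\end{equation*}
The right-hand side vanishes on all non-negative eigenspaces of $\hess f(x)$ and acts as multiplication by $-2 \mu(x) > 0$ on the one-dimensional space $\ker(\hess f(x) - \mu(x))$. Since the left-hand side has rank at most one, this uniquely determines $\nabla \ell_0(x)$ up to a sign as a vector of $\ker(\hess f(x) - \mu(x))$ with $|\nabla \ell_0(x)|^2 = -2 \mu(x)$, which is the first half of \eqref{a14}. Restricting the same rank-one identity to the normal space of $\Gamma$ gives $\hess_\perp (f + \ell_0^2/2)(x) = |\hess_\perp f(x)|$; because $\hess_\perp f(x)$ has exactly one negative eigenvalue, taking the matrix absolute value flips the sign of its determinant, yielding the second half of \eqref{a14}.

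The main obstacle I expect lies in the second computation of $\hess \phi_+(x)$. Along directions tangent to $\Gamma \times \{0\}$ the linearized Hamiltonian is nilpotent rather than diagonalizable, so one must carefully distinguish the genuine eigenvectors (which span $T_{(x,0)}(\Gamma \times \{0\})$) from the generalized ones, and verify that the projection of $T_{(x,0)} \Lambda_+$ on the $x$-space is surjective so that $\hess \phi_+(x)$ is well defined as the slope of the graph. Once this Jordan structure is handled, the identification with $|\hess f(x)|$ and all subsequent consequences follow routinely.
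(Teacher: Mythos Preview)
Your derivation of \eqref{a13} is identical to the paper's: both expand $|\nabla\phi_{+}|^{2}=|\nabla f|^{2}$ via Lemma~\ref{a8}, factor out $\ell_{0}$, and use that $\{\ell_{0}=0\}=\pi_{x}(K_{+})$ is a hypersurface to conclude by continuity.

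For the eigenvector statement and \eqref{a14} you take a genuinely different route. The paper does not compute $\hess\phi_{+}(x)$ at all; instead it simply \emph{differentiates the Eikonal equation \eqref{a13} itself}: writing $\nabla f(x+y)=\hess f(x)y+\ooo(y^{2})$, $\ell_{0}(x+y)=\nabla\ell_{0}(x)\cdot y+\ooo(y^{2})$ and $\nabla\ell_{0}(x+y)=\nabla\ell_{0}(x)+\ooo(y)$ in \eqref{a13} and collecting the terms linear in $y$ gives directly $2\hess f(x)\nabla\ell_{0}(x)=-|\nabla\ell_{0}(x)|^{2}\nabla\ell_{0}(x)$, from which everything follows. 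Your approach instead recovers $\hess\phi_{+}(x)=|\hess f(x)|$ from the tangent space $T_{(x,0)}\Lambda_{+}$ and then subtracts $\hess f(x)$ to isolate the rank-one piece $\nabla\ell_{0}(x)\nabla\ell_{0}(x)^{T}$. This is correct (the nilpotent obstacle you flag is harmless: $\Gamma\times\{0\}\subset\Lambda_{+}$ forces $\hess\phi_{+}(x)$ to vanish on $T_{x}\Gamma$, matching $|\hess f(x)|$ there), and indeed the identity $\hess\phi_{+}=|\hess f|$ on $\Gamma$ is used elsewhere in the paper (Section~\ref{s2}). The trade-off is that the paper's argument is entirely self-contained once \eqref{a13} is in hand and avoids any further appeal to the Hamiltonian linearization, whereas your argument imports the structure of $T_{(x,0)}\Lambda_{+}$ but yields the stronger matrix identity $\hess\phi_{+}(x)=|\hess f(x)|$ as a by-product. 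The determinant conclusion is then reached the same way in both proofs.
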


\begin{proof}
Combining $0 = q ( x , \nabla \phi_{+} ) = \vert \nabla \phi_{+} \vert^{2} - \vert \nabla f \vert^{2}$ with Lemma \ref{a8} leads to
\begin{align*}
0 &= \vert \nabla f + \ell_{0} \nabla \ell_{0} \vert^{2} - \vert \nabla f \vert^{2} \\
&= \vert \nabla f \vert^{2} + 2 \ell_{0} \nabla f \cdot \nabla \ell_{0} + \ell_{0}^{2} \vert \nabla \ell_{0} \vert^{2} - \vert \nabla f \vert^{2}  \\
&= \ell_{0} \big( 2 \nabla f \cdot \nabla \ell_{0} + \vert \nabla \ell_{0} \vert^{2} \ell_{0} \big) .
\end{align*}
Since $\ell_{0}$ does not vanish outside the hypersurface $\pi_{x} ( K_{+} )$ from \eqref{a15}, it implies \eqref{a13}.

Consider now $x \in \Gamma$ and $y \in \R^{d}$. Since $\nabla f ( x + y ) = \hess f ( x ) y + \ooo ( y^{2} )$, $\ell_{0} ( x + y ) = \nabla \ell_{0} ( x ) y + \ooo ( y^{2} )$ and $\nabla \ell_{0} ( x + y ) = \nabla \ell_{0} ( x ) + \ooo ( y )$ as $y \to 0$, \eqref{a13} gives
\begin{equation*}
\big\< 2 \hess f ( x ) \nabla \ell_{0} ( x ) , y \big\> + \big\< \vert \nabla \ell_{0} ( x ) \vert^{2} \nabla \ell_{0} ( x ) , y \big\> = \ooo ( y^{2} ).
\end{equation*}
and then
\begin{equation}
2 \hess f ( x ) \nabla \ell_{0} ( x ) = - \vert \nabla \ell_{0} ( x ) \vert^{2} \nabla \ell_{0} ( x ) .
\end{equation}
On the other hand, \eqref{a15} implies that $\nabla \ell_{0} ( x ) \neq 0$. Thus, $\nabla \ell_{0} ( x )$ is an eigenvector of $\hess f ( x )$ associated to the negative eigenvalue $- \vert \nabla \ell_{0} ( x ) \vert^{2} / 2$. Since $\mu ( x )$ is the unique negative eigenvalue of $\hess f ( x )$, the first part of \eqref{a14} follows.

Eventually, the previous discussion and $\ell_{0} ( x ) = 0$ yield
\begin{equation*}
\hess \Big( f + \frac{1}{2} \ell_{0}^{2} \Big) ( x ) = \hess ( f ) ( x ) - 2 \mu ( x ) \Pi_{x} ,
\end{equation*}
with the rank-one orthogonal projection $\Pi_{x} = - ( 2 \mu ( x ) )^{- 1} \nabla \ell_{0} ( x ) \< \nabla \ell_{0} ( x ) , \cdot \>$. Since $\Pi_{x}$ is the spectral projection of $\hess f ( x )$ associated to its negative eigenvalue $\mu ( x )$, $\hess ( f + \ell_{0}^{2} / 2 ) ( x )$ has the same eigenvalues than $\hess f ( x )$ except that $\mu ( x )$ is replaced by $- \mu ( x )$. Since the determinant of a matrix is the product of its eigenvalues, the last part of \eqref{a14} holds true.
\end{proof}

We now construct the other functions $\ell_{j}$ in the spirit of \cite[Lemma 3.4]{BoLePMi22}.

\begin{lemma}[Transport equations]\sl \label{a16}
There exist an open neighborhood $V$ of $\Gamma$ and $C^{\infty}$ functions $\ell_{j}$ for $j \geq 1$ such that
\begin{equation*}
2 \nabla f \cdot \nabla \ell_{j} + 2 \ell_{0} \nabla \ell_{0} \cdot \nabla \ell_{j} + \vert \nabla \ell_{0} \vert^{2} \ell_{j} = - R_{j} ( x , \ell_{0} , \ldots , \ell_{j - 1} ) ,
\end{equation*}
near $V$, where $R_{j}$ if given by Lemma \ref{a7}.
\end{lemma}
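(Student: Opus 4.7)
The plan is to solve the first-order linear PDE
\begin{equation*}
L \ell_j := 2 \nabla f \cdot \nabla \ell_j + 2 \ell_0 \nabla \ell_0 \cdot \nabla \ell_j + \vert \nabla \ell_0 \vert^2 \ell_j = - R_j ( x , \ell_0 , \ldots , \ell_{j-1} )
\end{equation*}
near $\Gamma$. The principal part is the vector field $X := 2 \nabla ( f + \ell_0^2 / 2 ) = 2 \nabla \phi_+$, which vanishes on $\Gamma$ since $\nabla f \vert_\Gamma = 0$ and $\ell_0 \vert_\Gamma = 0$. The zeroth-order coefficient $W := \vert \nabla \ell_0 \vert^2$ equals $- 2 \mu ( x ) > 0$ on $\Gamma$ by Lemma \ref{a12}, and the normal linearization $DX \vert_\Gamma = 2 \hess_\perp \phi_+ \vert_\Gamma$ is positive definite with eigenvalues equal to the absolute values of those of $\hess f \vert_\Gamma$ (by the last computation of Lemma \ref{a12}).

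I would first determine $\ell_j \vert_\Gamma$: evaluating the equation at $x \in \Gamma$ annihilates the gradient terms because $X ( x ) = 0$, leaving
\begin{equation*}
\ell_j ( x ) = - \frac{R_j ( x , 0 , \ldots , \ell_{j-1} ( x ) )}{W ( x )} ,
\end{equation*}
which is unambiguously defined thanks to the strict positivity of $W$ on $\Gamma$.

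Next, in tubular coordinates $( y_t , y_n ) \in \Gamma \times \R^{d - d_\Gamma}$ around $\Gamma$, I would construct $\ell_j$ as a formal Taylor series $\ell_j \sim \sum_\alpha \ell_{j , \alpha} ( y_t ) y_n^\alpha$ by matching coefficients in $L \ell_j + R_j = 0$. After diagonalizing the normal linearization of $X$ with strictly positive eigenvalues $\lambda_1 ( y_t ) , \ldots , \lambda_{d - d_\Gamma} ( y_t )$, the operator acting on $\ell_{j , \alpha}$ at order $\vert \alpha \vert = k$ is multiplication by $W ( y_t ) + \sum_i \alpha_i \lambda_i ( y_t ) > 0$, hence invertible, and each $\ell_{j , \alpha}$ is uniquely determined from $R_j$ and the previously computed coefficients. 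Borel's theorem then yields a smooth function $\widetilde \ell_j$ on some neighborhood $V$ of $\Gamma$ with exactly these Taylor coefficients, so that the residual $E := L \widetilde \ell_j + R_j$ is smooth and flat on $\Gamma$.

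To convert this formal solution into an exact one, I would correct $\widetilde \ell_j$ by a function flat on $\Gamma$. Since $X$ is outward expanding at $\Gamma$, each $x$ sufficiently close to $\Gamma$ is reached by a unique backward trajectory $\gamma ( s )$ of $X$ with $\gamma ( 0 ) = x$ and $\gamma ( s ) \to x_0 \in \Gamma$ exponentially as $s \to - \infty$, so that
\begin{equation*}
\delta ( x ) := - \int_{- \infty}^0 \exp \Big( - \int_s^0 W ( \gamma ( \tau ) ) d \tau \Big) E ( \gamma ( s ) ) d s
\end{equation*}
defines a function on $V$ solving $L \delta = - E$, and $\ell_j := \widetilde \ell_j + \delta$ does the job. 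The main obstacle will be showing that $\delta$ extends smoothly up to $\Gamma$ with $\delta$ itself flat on $\Gamma$: this requires combining the flatness of $E$ on $\Gamma$ with the uniform exponential convergence $\gamma ( s ) \to x_0$, and ultimately rests on the normal hyperbolicity of $X$ at $\Gamma$ furnished by Lemma \ref{a12}.
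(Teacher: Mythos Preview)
Your proposal is correct and follows essentially the same approach as the paper: rewrite the transport operator as $2\nabla\phi_{+}\cdot\nabla + |\nabla\ell_{0}|^{2}$, solve the equation as a formal Taylor series in the normal variables using the strict positivity of the eigenvalues of the normal linearization together with $W=-2\mu>0$, realize this series by Borel, and remove the flat remainder by integrating along the backward flow of the vector field. The only (harmless) slip is the factor of $2$ in the eigenvalues of $DX\vert_{\Gamma}=2\hess_{\perp}\phi_{+}$; the paper also makes the decomposition $\mathcal{L}=\mathcal{L}_{0}+\mathcal{L}_{\rm rem}$ and the estimates on the backward flow slightly more explicit than your sketch, but the strategy is identical.
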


\begin{proof}
We can solve these transport equations by induction over $j$ since $R_{j}$ depends only on the previous functions $\ell_{0} , \ldots \ell_{j - 1}$. Then, it is enough to show that, for any smooth function $r$, there exists a smooth function $u$ defined near $V$ such that
\begin{equation} \label{a19}
\lll u = r ,
\end{equation}
where $\lll$ is the operator
\begin{align*}
\lll u &= 2 \nabla f \cdot \nabla u + 2 \ell_{0} \nabla \ell_{0} \cdot \nabla u + \vert \nabla \ell_{0} \vert^{2} u  \\
&= 2 \nabla \phi_{+} \cdot \nabla u + \vert \nabla \ell_{0} \vert^{2} u ,
\end{align*}
thanks to Lemma \ref{a8}. Near each point of $\Gamma$, there exists a local change of coordinates $F : \R^{d} \ni x \rightarrow ( y , z ) \in \R^{d_{\Gamma}} \times \R^{d - d_{\Gamma}}$ such that $\Gamma = \{ ( y , z ) ; \ z = 0 \}$. In these new coordinates, $\lll$ writes
\begin{equation} \label{a17}
\lll = ( d F ) 2 \nabla_{x} \phi_{+} \circ F^{- 1} \cdot \nabla_{y , z} + \vert \nabla_{x} \ell_{0} \vert^{2} \circ F^{- 1}.
\end{equation}
Since $\nabla \phi_{+}$ vanishes on $\Gamma$, we have $\nabla_{x} \phi_{+} \circ F^{- 1} ( y , 0 ) = 0$ for all $y$. Then, the Taylor formula gives
\begin{equation*}
( d F ) 2 \nabla_{x} \phi_{+} \circ F^{- 1} ( y , z ) = ( d F ) 2 \hess ( \phi_{+} ) ( d F )^{- 1} \circ F^{- 1} ( y , 0 ) \left( \begin{array}{c}
0 \\
z 
\end{array} \right) + r ( y , z ) ,
\end{equation*}
for some smooth vector $r ( y , z ) = \ooo ( z^{2} )$ near $z = 0$. Consider the matrix on $\R^{d}$
\begin{equation*}
N ( y ) = ( d F ) 2 \hess ( \phi_{+} ) ( d F )^{- 1} \circ F^{- 1} ( y , 0 ) .
\end{equation*}
Since $\ker \hess ( \phi_{+} ) ( x ) = T_{x} \Gamma$ for $x \in \Gamma$, we deduce $\ker N ( y ) = \{ ( t_{y} , 0 ) ; \ t_{y} \in \R^{d_{\Gamma}} \}$. Thus, $N ( y )$ can be written
\begin{equation*}
N ( y ) = \left( \begin{array}{cc}
0 & L ( y ) \\
0 & M ( y )
\end{array} \right) ,
\end{equation*}
for some $( d - d_{\Gamma} ) \times ( d - d_{\Gamma} )$ (resp. $d_{\Gamma} \times ( d - d_{\Gamma} )$) invertible matrix $M ( y )$ (resp. matrix $L ( y )$). The matrix $2 \hess ( \phi_{+} ) ( F^{- 1} ( y , 0 ) )$ being real diagonalizable as a symmetric matrix, it is the same for $N ( y )$ with the same eigenvalues. Thus, $M ( y )$ is real diagonalizable with positive eigenvalues (those of $2 \hess ( \phi_{+} ) ( F^{- 1} ( y , 0 ) )$). The previous discussion and \eqref{a14} show that $\lll$ can be decomposed as
\begin{equation} \label{a18}
\lll = \lll_{0} + \lll_{\rm rem} ,
\end{equation}
with the operators
\begin{equation*}
\lll_{0} = M ( y ) z \cdot \nabla_{z} - 2 \mu ( y ) \qquad \text{and} \qquad
\lll_{\rm rem} = c_{0} ( y , z ) + c_{y} ( y , z ) \nabla_{y} + c_{z} ( y , z ) \nabla_{z} ,
\end{equation*}
where $\mu ( y )$ is a shortcut for $\mu ( F^{- 1} ( y , 0 ) )$ and for some smooth functions $c_{0} ( y , z ) = \ooo ( z )$, $c_{y} ( y , z ) = \ooo ( z )$ and $c_{z} ( y , z ) = \ooo ( z^{2} )$.

To solve \eqref{a19}, we first look for a formal solution of the form formal powers in $z$ whose coefficients are smooth functions of $y$. Then, for $m \in \N$, let $\hhh_{m}$ be the set of homogeneous polynomials in $z$ of degree $m$ whose coefficients are smooth functions of $y$. The operator $\lll_{0}$ acts on $\hhh_{m}$ for all $m \in \N$. Moreover, for $y$ fixed, there exists a basis of $\R^{d - d_{\Gamma}}$ on which $M ( y )$ is diagonal with eigenvalues $\lambda_{k} ( y ) > 0$. Then, the monomials of degree $m$ form a basis of eigenvectors of $M ( y ) z \cdot \nabla_{z} - 2 \mu ( y )$ as an operator on the homogeneous polynomials in $z$ of degree $m$. Moreover, the eigenvalue associated to $z^{\alpha} = z_{1}^{\alpha_{1}} \cdots z_{d - d_{\Gamma}}^{\alpha_{d - d_{\Gamma}}}$ is $\sum_{k} \alpha_{k} \lambda_{k} ( y ) - 2 \mu ( y ) > 0$. Thus, $M ( y ) z \cdot \nabla_{z} - 2 \mu ( y )$ is invertible on the homogeneous polynomials in $z$ of degree $m$ at $y$ fixed. By continuity and compactness of $\Gamma$, this operator is invertible for all $( y , 0 ) \in \Gamma$ with a uniformly bounded smooth inverse. This implies that $\lll_{0}$ is invertible on $\hhh_{m}$. On the other hand, the properties on the $c_{\bullet}$ imply that $\lll_{\rm rem}$ sends formal powers in $z$ of degree at least $m$ with smooth coefficients in $y$ into formal powers in $z$ of degree at least $m + 1$ with smooth coefficients in $y$. Let $\widetilde{r}$ denote the formal power expansion in $z$ with smooth coefficients in $y$ of $r$. Since $\lll_{0}$ is invertible on $\hhh_{m}$ for all $m \in \N$ and $\lll_{\rm rem}$ is a lower order operator, one can construct inductively on the order of the powers of $z$ a formal solution $\widetilde{u}$ of
\begin{equation} \label{a20}
\lll \widetilde{u} = \widetilde{r} .
\end{equation}

Starting from the previous constructions, the Borel lemma provides a smooth function $\overline{u}$ defined on $\R^{d}$ such that its formal power expansion in $z$ given by the Taylor formula is precisely $\widetilde{u}$. In particular, \eqref{a20} gives
\begin{equation} \label{a21}
\lll \overline{u} = r +  \widehat{r} ( y  , z ) ,
\end{equation}
where, for all $\alpha , \beta$, $\partial_{y}^{\alpha} \partial_{z}^{\beta} \widehat{r} ( y , z ) = \ooo ( z^{\infty} )$ near $z = 0$ uniformly for $( y , 0 ) \in \Gamma$. To treat the remainder term $\widehat{r}$ and build an exact solution of \eqref{a19}, we use the characteristic method as in the proof of Proposition 3.5 of Dimassi and Sj\"{o}strand \cite{DiSj99_01}. For that, let $\varphi_{t} ( x )$ denote the flow of $\nabla \phi_{+}$, that is
\begin{equation*}
\left\{ \begin{aligned}
&\partial_{t} \varphi_{t} ( x ) = \nabla \phi_{+} ( \varphi_{t} ( x ) ) ,  \\
&\varphi_{t} ( x ) = x .
\end{aligned} \right.
\end{equation*}
Since the Hessian of $\phi_{+}$ is positive in the directions normal to $\Gamma$, the smooth function $\varphi_{t}$ satisfies the following estimates for all $\alpha \in \N^{d}$
\begin{equation*}
\vert \pi_{z} ( \varphi_{t} ( x ) ) \vert \leq C e^{- C \vert t \vert} \vert x \vert \qquad \text{and} \qquad \vert \partial_{x}^{\alpha} \varphi_{t} ( x ) \vert \leq C_{\alpha} e^{C_{\alpha} \vert t \vert} ,
\end{equation*}
for $t \leq 0$ with $C , C_{\alpha} > 0$ and $\pi_{z} ( y , z ) = z$. We then define the function
\begin{equation}
\widehat{u} ( x ) = \int_{- \infty}^{0} e^{- \int_{t}^{0} \vert \nabla \ell_{0} \vert^{2} ( \varphi_{s} ( x ) ) \, d s} \widehat{r} ( \varphi_{t} ( x ) ) \, d t .
\end{equation}
Thanks to the previous estimates and the properties of $\widehat{r}$, this expression defines a smooth function near a neighborhood $V$ of $\Gamma$ (independent of $\widehat{r}$). Moreover, it solves $\lll \widehat{u} = \widehat{r}$. Finally, $u : = \overline{u} + \widehat{u}$ is a solution of \eqref{a19}.
\end{proof}

Combining Lemma \ref{a12}, Lemma \ref{a16} and a Borel procedure in $h$, we eventually get

\begin{prop}\sl \label{a22}
For any $\Gamma \in \uuu^{( 1 )}_{\rm sep}$, there exists a smooth function $x \mapsto \ell ( x , h )$ defined in a neighborhood $V$ of $\Gamma$ such that the following holds true.

$i)$ $\ell$ admits a classical expansion $\ell ( x , h ) \sim \sum_{j} h^{j} \ell_{j} ( x )$,

$ii)$ $2 \nabla f \cdot \nabla \ell + \vert \nabla \ell \vert^{2} \ell - h \Delta \ell = \ooo ( h^{\infty} )$ uniformly with respect to $x$ in $V$,

$iii)$ the function $- \ell ( x , h )$ satisfies also $i)$ and $ii)$.
\end{prop}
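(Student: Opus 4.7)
The overall strategy is to build the coefficients $\ell_{j}$ one after the other so that the asymptotic expansion of $w$ given by Lemma \ref{a7} vanishes identically at every order, and then to realize a smooth function $\ell ( x , h )$ with this formal series as asymptotic expansion via the Borel lemma. Since $w = h ( 2 \nabla f \cdot \nabla \ell + \vert \nabla \ell \vert^{2} \ell - h \Delta \ell )$ by Lemma \ref{a7}, the vanishing $w = \ooo ( h^{\infty} )$ will immediately yield property $ii)$ after division by $h$.

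For the zeroth coefficient, I would take $\ell_{0}$ to be the function produced by Lemma \ref{a8}, defined on an open neighborhood $V$ of $\Gamma$; Lemma \ref{a12} then states exactly that the eikonal equation $w_{1} = 0$ is satisfied. For $j \geq 1$, I would proceed by induction: assuming that $\ell_{0} , \ldots , \ell_{j - 1}$ have been constructed on $V$, the source $- R_{j} ( x , \ell_{0} , \ldots , \ell_{j - 1} )$ is a smooth function on $V$, so Lemma \ref{a16} supplies a smooth solution $\ell_{j}$ on $V$ of the transport equation $\lll \ell_{j} = - R_{j}$, which translates directly into $w_{j + 1} = 0$. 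A point worth emphasizing is that the neighborhood $V$ produced by Lemma \ref{a16} is the same at every step, being dictated only by the backward flow of $\nabla \phi_{+}$ around $\Gamma$ and completely independent of the source term; in particular $V$ does not shrink along the induction.

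Once the sequence $( \ell_{j} )_{j \in \N} \subset C^{\infty} ( V )$ is available, Borel's lemma in the parameter $h$ produces a function $\ell ( x , h ) \in C^{\infty} ( V \times ] 0 , h_{0} ] )$ admitting $\sum_{j \geq 0} h^{j} \ell_{j} ( x )$ as its classical expansion in $h$, uniformly on $V$ together with all its $x$-derivatives; this is $i)$. Plugging this $\ell$ into Lemma \ref{a7} and using $w_{j} = 0$ for every $j \geq 1$ together with the asymptotic character of the expansion, one obtains $w ( x , h ) = \ooo ( h^{\infty} )$ uniformly in $x \in V$, so that dividing by $h$ yields $ii)$. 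Point $iii)$ is immediate, since the map $\ell \mapsto 2 \nabla f \cdot \nabla \ell + \vert \nabla \ell \vert^{2} \ell - h \Delta \ell$ is odd in $\ell$, hence $- \ell ( x , h ) \sim \sum_{j} h^{j} ( - \ell_{j} )$ automatically inherits both $i)$ and $ii)$. I do not expect a genuine obstacle here: the whole analytic heart of the matter has already been carried out in Lemmas \ref{a8}, \ref{a12} and \ref{a16}, and the only mildly delicate verification is the uniformity of the neighborhood $V$ through the induction, which is ensured by the structure of the proof of Lemma \ref{a16}.
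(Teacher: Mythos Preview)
Your proposal is correct and follows essentially the same approach as the paper, which simply states that the proposition is obtained by ``combining Lemma \ref{a12}, Lemma \ref{a16} and a Borel procedure in $h$''. The only minor difference is your justification of $iii)$ via the oddness of the map $\ell \mapsto 2 \nabla f \cdot \nabla \ell + \vert \nabla \ell \vert^{2} \ell - h \Delta \ell$, whereas the paper remarks that $-\ell$ can be obtained by rerunning the construction with $\nu$ replaced by $-\nu$; both arguments are valid and equally short.
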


Note that $- \ell$ is also a function which can be obtained following the construction of $\ell$ but replacing the vector field $\nu ( x )$ by $ - \nu ( x  )$. Depending on the global geometry of the critical set of $f$, we will choose later which function ($\ell$ or $- \ell$) will be convenient.

\section{Global construction of the quasimodes} \label{s5}

Following Section 3 of \cite{LePMi20}, we construct a global quasimode $\psi_{m}$ associated to each minimal manifold $m \in \uuu^{(0)}$. Roughly speaking, we search this function supported in a neighborhood of $E ( m )$ and decaying from $m$. Inside $E ( m )$, we choose $\psi_{m} = e^{- f / h}$. Near a regular point $x$ of the boundary $\partial E ( m )$ (that is $f ( x ) = \bsigma ( m )$ and $\nabla f ( x ) \neq 0$) or near a non-separating critical manifold $\Gamma \in \partial E ( m )$, we still take $\psi_{m} = e^{- f / h}$ since this choice is in agreement with the values of $\psi_{m}$ already known in $X_{\bsigma ( m )}$ near $x$ or $\Gamma$. Eventually, for any separating saddle manifold $\Gamma \subset \partial E ( m )$, Lemma \ref{a4} shows that $E ( m )$ is one of the two connected components $B_{\pm}$ of $X_{\bsigma ( m )}$ near $\Gamma$. We already know the value of $\psi_{m}$ in $E ( m )$ and we want $\psi_{m} = 0$ in the other connected component. To glue these two functions together, we use the constructions of the previous section near $\Gamma$.

More precisely, let $\Gamma \in \uuu^{( 1 )}_{\rm sep}$ be a separating saddle manifold with $\Gamma \subset \partial E ( m )$ and note
\begin{equation*}
\sigma : = \bsigma ( m ) = f ( \Gamma ) .
\end{equation*}
From Lemma \ref{a4}, $X_{\sigma}$ has two connected components $B_{\pm}$ with $ \Gamma \cap \overline{B_\pm}  \neq \emptyset$ and $E ( m ) $ is one of them. Modulo a change of labeling, we assume in the sequel that $E ( m ) = B_{+}$. Let $\ell_{\Gamma} ( x , h ) : V \to \mathbb R$ be the function constructed in Section \ref{s4} and positive in $E ( m )$ (see Proposition \ref{a22} $iii)$ for the choice of sign), and let $U_\Gamma$ be some small open neighborhood of $\Gamma$ such that $\overline{U_\Gamma}\subset V$. Mimicking \eqref{a6}, we set for $x \in U_{\Gamma}$
\begin{equation} \label{a34}
v_{\Gamma} ( x , h ) = C_{\Gamma}^{- 1} \int_{0}^{\ell_{\Gamma} ( x , h )} \zeta ( s / \tau ) e^{- s^{2} / 2 h} d s ,
\end{equation}
where $\zeta \in C^{\infty}_{0} ( ] - 2 , 2 [; [ 0 , 1 ] )$ is an even function equal to $1$ on $[ - 1 , 1 ]$ and with $\tau > 0$ and the renormalization constant
\begin{equation} \label{a47}
C_{\Gamma} = \int_{0}^{+ \infty} \zeta ( s / \tau ) e^{- s^{2} / 2 h} d s = \sqrt{\frac{\pi h }{2}} \big( 1 + \ooo ( e^{- c / h} ) \big) ,
\end{equation}
for some $c > 0$.

\begin{lemma}\sl \label{a37}
Let $V_{\Gamma}^{0}$ be an open set satisfying $\Gamma \subset V_{\Gamma}^{0} \Subset U_{\Gamma}$. There exist open neighborhoods $V_{\Gamma}^{\pm}$ of $\overline{B_{\pm}} \cap \overline{U_{\Gamma}} \setminus V_{\Gamma}^{0}$ such that $v_{\Gamma} = \pm 1$ in $V_{\Gamma}^{\pm}$ for all $\tau > 0$ small enough.
\end{lemma}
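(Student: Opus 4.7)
The plan is to exploit the cutoff structure of the integrand in \eqref{a34}: since $\zeta$ is even and supported in $[-2\tau,2\tau]$, as soon as $|\ell_{\Gamma}(x,h)| \geq 2\tau$ one has
\begin{equation*}
v_{\Gamma}(x,h) = \operatorname{sgn}(\ell_{\Gamma}(x,h)) \, C_{\Gamma}^{-1}\int_{0}^{+\infty} \zeta(s/\tau) e^{-s^{2}/2h}\,ds = \operatorname{sgn}(\ell_{\Gamma}(x,h)),
\end{equation*}
by \eqref{a47}. The lemma therefore reduces to producing open neighborhoods $V_{\Gamma}^{\pm}$ of $\overline{B_{\pm}} \cap \overline{U_{\Gamma}} \setminus V_{\Gamma}^{0}$ on which $\pm \ell_{\Gamma}(x,h) \geq 2\tau$ uniformly for $h$ small, after an appropriate choice of $\tau > 0$.

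First I would analyze the sign of the leading term $\ell_{0}$ from Proposition \ref{a22}. From Lemma \ref{a8}, $\ell_{0}^{2}/2 = \phi_{+} - f + f(\Gamma)$. On $\overline{B_{\pm}}$ one has $f \leq f(\Gamma)$, hence $\ell_{0}^{2} \geq 2\phi_{+} \geq 0$ with joint equality only where $\phi_{+} = 0$, i.e.\ only on $\Gamma$. Thus $\ell_{0}$ never vanishes on $(\overline{B_{+}} \cup \overline{B_{-}}) \cap V \setminus \Gamma$. To pin down the sign, I use the formula \eqref{a15} from the proof of Lemma \ref{a8}: locally $\ell_{0,x}(y,z)$ is a strictly positive smooth factor times $z$, where the $z$-axis is the line $\R \nu(x)$. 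By Proposition \ref{a1}, $\nu(x)$ points from $x$ into $B_{+}$, and the sign choice made via Proposition \ref{a22} iii) ensures $\ell_{\Gamma} > 0$ on $E(m) = B_{+}$ near $\Gamma$. After shrinking $U_{\Gamma}$ if necessary, this gives $\ell_{0} > 0$ on $\overline{B_{+}} \cap \overline{U_{\Gamma}} \setminus \Gamma$ and $\ell_{0} < 0$ on $\overline{B_{-}} \cap \overline{U_{\Gamma}} \setminus \Gamma$.

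Next I would combine compactness with the classical expansion $\ell_{\Gamma} = \ell_{0} + \ooo(h)$. The set $\overline{B_{\pm}} \cap \overline{U_{\Gamma}} \setminus V_{\Gamma}^{0}$ is compact and, since $\Gamma \subset V_{\Gamma}^{0}$, disjoint from $\Gamma$. By the previous step, there exists $c > 0$ with $\pm \ell_{0} \geq 2c$ on this set, and by continuity of $\ell_{0}$ the open set $V_{\Gamma}^{\pm} := \{ x \in U_{\Gamma} ; \ \pm \ell_{0}(x) > c \}$ is an open neighborhood of it. From Proposition \ref{a22} i), $\ell_{\Gamma} = \ell_{0} + \ooo(h)$ uniformly on $\overline{U_{\Gamma}}$, so for $h$ small enough $\pm \ell_{\Gamma} \geq c/2$ on $V_{\Gamma}^{\pm}$. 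Finally, choosing $\tau \in (0, c/4)$ forces $\pm \ell_{\Gamma} \geq 2\tau$ on $V_{\Gamma}^{\pm}$, and the opening cutoff observation yields $v_{\Gamma} = \pm 1$ there.

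The main obstacle is the sign analysis in the second paragraph: the local sign information provided by the construction of $\ell_{0}$ and by Proposition \ref{a22} iii) is only available in a chart near each point of $\Gamma$, and one must check that it propagates to a uniform sign on the full closed set $\overline{B_{\pm}} \cap \overline{U_{\Gamma}} \setminus \Gamma$. The key point is that the zero set $\{ \ell_{0} = 0 \} = \pi_{x}(K_{+})$ is a smooth hypersurface through $\Gamma$ which, in the local Morse--Bott chart, reads $\{y_{-} = 0\}$, whereas $\overline{B_{\pm}} = \{\pm y_{-} \geq |y_{+}|\}$; these sets meet only along $\Gamma$, so shrinking $U_{\Gamma}$ prevents $\ell_{0}$ from changing sign on either $\overline{B_{+}} \cap \overline{U_{\Gamma}} \setminus \Gamma$ or $\overline{B_{-}} \cap \overline{U_{\Gamma}} \setminus \Gamma$.
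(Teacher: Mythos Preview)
Your proposal is correct and follows essentially the same route as the paper's proof: both reduce the claim to showing $\pm\ell_{\Gamma}$ is bounded below by a positive constant on a neighborhood of $\overline{B_{\pm}}\cap\overline{U_{\Gamma}}\setminus V_{\Gamma}^{0}$, use the identity $\ell_{\Gamma,0}^{2}=2(\phi_{+}-f+f(\Gamma))$ from Lemma~\ref{a8} together with $f\leq f(\Gamma)$ on $\overline{B_{\pm}}$ and the positivity of $\phi_{+}$ off $\Gamma$, fix the sign via Proposition~\ref{a1} and the construction below \eqref{a15}, and then conclude by the cutoff structure of $\zeta$. Your final paragraph on the ``main obstacle'' is slightly redundant, since once you have $\ell_{0}^{2}\geq 2\phi_{+}>0$ on $(\overline{B_{+}}\cup\overline{B_{-}})\cap\overline{U_{\Gamma}}\setminus\Gamma$ the nonvanishing is already secured, and the global sign is fixed directly by the globally defined field $\nu$ rather than by a chart-by-chart propagation.
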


\begin{proof}
Since $\phi_{+}$ has a positive Hessian in the normal directions to $\Gamma$, there exists $\beta > 0$ such that $\phi_{+} \geq \beta$ on $\overline{U_{\Gamma}} \setminus V_{\Gamma}^{0}$ (after a possible shrinking of $U_{\Gamma}$). On the other hand, we have $f \leq \sigma$ on $\overline{B_{\pm}}$.  Summing up, Lemma \ref{a8} gives
\begin{equation*}
\ell_{\Gamma , 0}^{2} = 2 \phi_{+} - 2 ( f - \sigma ) \geq 2 \beta ,
\end{equation*}
in $( \overline{B_{+}} \cup \overline{B_{-}} ) \cap \overline{U_{\Gamma}} \setminus V_{\Gamma}^{0}$. Moreover, we have $\pm \ell_{\Gamma , 0} \geq 0$ in $\overline{B_{\pm}} \cap \overline{U_{\Gamma}}$ by our choice of sign for $\ell_{\Gamma}$, Proposition \ref{a1} $ii)$ and the discussion below \eqref{a15}. By a continuity argument, the previous equation gives $\pm \ell_{\Gamma} \geq \sqrt{\beta}$ in $V_{\Gamma}^{\pm}$, an open neighborhood of $\overline{B_{\pm}} \cap \overline{U_{\Gamma}} \setminus V_{\Gamma}^{0}$. For any $0 < \tau < \sqrt{\beta} / 2$, the support properties of $\zeta$ imply that $\pm [ \ell_{\Gamma} ( x ) , + \infty [$ for $x \in V_{\Gamma}^{\pm}$ does not meet the support of $\zeta ( \cdot / \tau )$ and then
\begin{equation*}
v_{\Gamma} = \left\{ \begin{aligned}
&C_{\Gamma}^{- 1} \int_{0}^{+ \infty} \zeta ( s / \tau ) e^{- s^{2} / 2 h} d s = 1 &&\text{ in } V_{\Gamma}^{+} , \\
&C_{\Gamma}^{- 1} \int_{0}^{- \infty} \zeta ( s / \tau ) e^{- s^{2} / 2 h} d s = - 1 &&\text{ in } V_{\Gamma}^{-} ,
\end{aligned} \right.
\end{equation*}
the second identity using that $\zeta$ is even.
\end{proof}

\begin{figure}
\begin{center}
\begin{picture}(0,0)%
\includegraphics{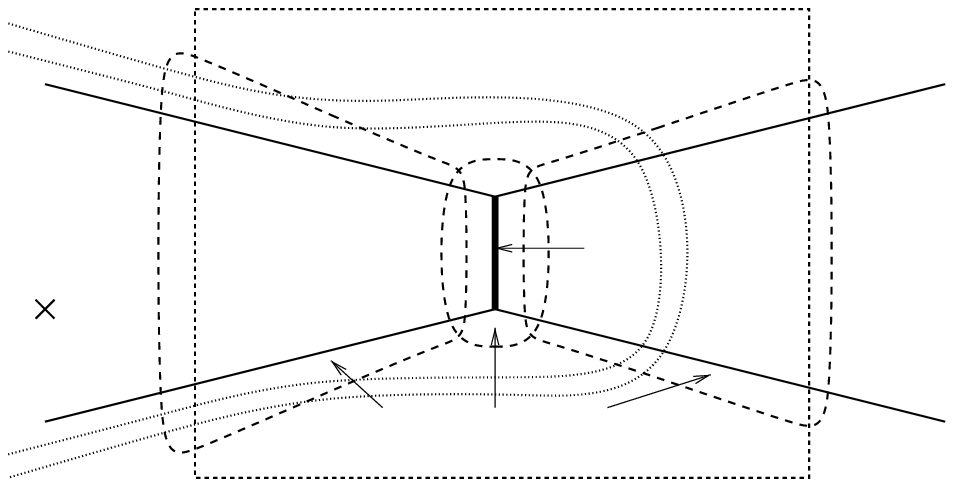}%
\end{picture}%
\setlength{\unitlength}{1184sp}%
\begingroup\makeatletter\ifx\SetFigFont\undefined%
\gdef\SetFigFont#1#2#3#4#5{%
  \reset@font\fontsize{#1}{#2pt}%
  \fontfamily{#3}\fontseries{#4}\fontshape{#5}%
  \selectfont}%
\fi\endgroup%
\begin{picture}(17730,7566)(-10289,-7894)
\put(6076,-4411){\makebox(0,0)[lb]{\smash{{\SetFigFont{9}{10.8}{\rmdefault}{\mddefault}{\updefault}$B_{-}$}}}}
\put(-7199,-5686){\makebox(0,0)[b]{\smash{{\SetFigFont{9}{10.8}{\rmdefault}{\mddefault}{\updefault}$m$}}}}
\put(-8549,-3586){\makebox(0,0)[lb]{\smash{{\SetFigFont{9}{10.8}{\rmdefault}{\mddefault}{\updefault}$B_{+} = E ( m )$}}}}
\put(7426,-1636){\makebox(0,0)[lb]{\smash{{\SetFigFont{9}{10.8}{\rmdefault}{\mddefault}{\updefault}$\{ f = \sigma \}$}}}}
\put(-1799,-7321){\makebox(0,0)[b]{\smash{{\SetFigFont{9}{10.8}{\rmdefault}{\mddefault}{\updefault}$V_{\Gamma}^{+}$}}}}
\put(  1,-7321){\makebox(0,0)[b]{\smash{{\SetFigFont{9}{10.8}{\rmdefault}{\mddefault}{\updefault}$V_{\Gamma}^{0}$}}}}
\put(-3974,-4936){\makebox(0,0)[lb]{\smash{{\SetFigFont{9}{10.8}{\rmdefault}{\mddefault}{\updefault}$v_{\Gamma} = 1$}}}}
\put(1801,-7321){\makebox(0,0)[b]{\smash{{\SetFigFont{9}{10.8}{\rmdefault}{\mddefault}{\updefault}$V_{\Gamma}^{-}$}}}}
\put(3001,-5611){\makebox(0,0)[lb]{\smash{{\SetFigFont{9}{10.8}{\rmdefault}{\mddefault}{\updefault}$v_{\Gamma} = - 1$}}}}
\put(1651,-4336){\makebox(0,0)[lb]{\smash{{\SetFigFont{9}{10.8}{\rmdefault}{\mddefault}{\updefault}$\Gamma$}}}}
\put(-10274,-1261){\makebox(0,0)[lb]{\smash{{\SetFigFont{9}{10.8}{\rmdefault}{\mddefault}{\updefault}$\partial \{ \theta_{m} = 1 \}$}}}}
\put(-10274,-586){\makebox(0,0)[lb]{\smash{{\SetFigFont{9}{10.8}{\rmdefault}{\mddefault}{\updefault}$\partial \{ \theta_{m} = 0 \}$}}}}
\put(5326,-661){\makebox(0,0)[lb]{\smash{{\SetFigFont{9}{10.8}{\rmdefault}{\mddefault}{\updefault}$U_{\Gamma}$}}}}
\end{picture}%
\end{center}
\caption{The geometric setting near a separating saddle manifold.}
\label{f3}
\end{figure}

We are now in position to define the global quasimode $\psi_{m}$. Recall (see \eqref{a53}) that ${\bf j}(m)$ denotes  the set of separating saddle manifolds $\Gamma \in \uuu^{( 1 )}_{\rm sep}$ such that $\Gamma \cap \partial E ( m ) \neq 0$ (or equivalently $\Gamma \subset \partial E ( m )$ by  \eqref{e9}). If $m \neq \underline{m}$, we have ${\bf j}(m) \neq \emptyset$. Let $\delta > 0$ be a small enough parameter fixed in the sequel. For all $\Gamma \in {\bf j}(m)$, consider $V_{\Gamma}^{0}$ as in Lemma \ref{a37} with $V_{\Gamma}^{0} \subset E ( m ) + B ( 0 , \delta )$. Let $\theta_{m} \in C^{\infty}_{0} ( E ( m ) + B ( 0 , 2 \delta ) ; [ 0 , 1 ] )$ be a function equal to $1$ near $\overline{E ( m )}$ and $\overline{V_{\Gamma}^{0}}$ for all $\Gamma \in {\bf j}(m) $ and such that
\begin{equation} \label{a38}
\supp ( \theta_{m} ) \cap \partial U_{\Gamma} \subset V_{\Gamma}^{+} ,
\end{equation}
for all $\Gamma \in {\bf j}(m)$ (see Figure \ref{f3}).

\begin{defin}\sl \label{a39}
For any $m \in \uuu^{( 0 )}$, let us define the function $\psi_{m}$ by
\begin{equation} \label{a40}
\psi_{m} : = \left\{ \begin{aligned}
&\theta_{m} ( v_{\Gamma} + 1 ) e^{- ( f - f ( m ) ) / h} &&\text{ in } U_{\Gamma} \text{ for all } \Gamma \in {\bf j}(m) , \\
&2 \theta_{m} e^{- ( f - f ( m ) ) / h} &&\text{ in } \R^{d} \setminus \bigcup_{\Gamma \in {\bf j}(m)} U_{\Gamma} ,
\end{aligned} \right.
\end{equation}
when $m \neq \underline{m}$ and by $\psi_{m} ( x ) : = e^{- ( f ( x ) - f ( m ) ) / h}$ when $m = \underline{m}$.
\end{defin}

These functions satisfy the following properties.

\begin{lemma}\sl \label{a41}
For $\delta > 0$ and then $\tau > 0$ small enough, one has

$i)$ $\psi_{m} \in C^{\infty}_{0} ( \R^{d} )$ for $m \neq \underline{m}$.

$ii)$ if $\bsigma ( m ) = \bsigma ( m^{\prime} )$ and $m \neq m^{\prime}$, then $\supp ( \psi_{m} ) \cap \supp ( \psi_{m^{\prime}} ) = \emptyset$.

$iii)$ if $\bsigma ( m ) > \bsigma ( m^{\prime} )$, then
\begin{align*}
&\star \text{ either } \supp ( \psi_{m} ) \cap \supp ( \psi_{m^{\prime}} ) = \emptyset ,  \\
&\star \text{ or } \psi_{m} = 2 e^{- ( f - f ( m ) ) / h} \text{ on } \supp ( \psi_{m^{\prime}} ) \text{ and } f ( m^{\prime} ) > f ( m ) .
\end{align*}
\end{lemma}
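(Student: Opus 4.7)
The plan is to treat $(i)$ by a direct gluing argument and to reduce $(ii)$ and $(iii)$ to topological statements about the sets $E(m)$ that follow from Proposition~\ref{a23} and Assumption~\ref{hyp2}. After shrinking each $U_{\Gamma}$ (for $\Gamma\in{\bf j}(m)$) to lie in $E(m)+B(0,\delta)$---possible since $\Gamma\subset\partial E(m)$---one obtains the a priori bound $\supp(\psi_{m})\subset\supp(\theta_{m})\subset E(m)+B(0,2\delta)$. For $(i)$, smoothness across $\partial U_{\Gamma}$ is the only issue: condition \eqref{a38} places $\supp(\theta_{m})\cap\partial U_{\Gamma}$ inside $V_{\Gamma}^{+}$, and Lemma~\ref{a37} gives $v_{\Gamma}\equiv 1$ on $V_{\Gamma}^{+}$, so both pieces of \eqref{a40} read $2\theta_{m}e^{-(f-f(m))/h}$ on a neighborhood of the gluing locus. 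Compactness then follows from the a priori bound and boundedness of $E(m)$ (by the confining assumption).

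For $(ii)$, with $\bsigma(m)=\bsigma(m')=\sigma_{i}$ and $m\neq m'$, the sets $E(m)$ and $E(m')$ are distinct connected components of the open set $X_{\sigma_{i}}$, and I claim $\overline{E(m)}\cap\overline{E(m')}=\emptyset$. Any common point $x$ satisfies $f(x)=\sigma_{i}$ and $\nabla f(x)=0$ (otherwise a one-sided neighborhood of $x$ in $X_{\sigma_{i}}$ would lie in a single component), so $x$ belongs to some $\Gamma\in\uuu$ with $f(\Gamma)=\sigma_{i}$. A case analysis using Proposition~\ref{a23} rules out every type except $\Gamma\in\uuu^{(1)}_{\rm sep}$: minima do not touch $X_{\sigma_{i}}$, submanifolds of index $\geq 2$ and non-locally-separating saddles have $X_{\sigma_{i}}$ connected near $\Gamma$, and locally but not globally separating saddles have both local branches in one component. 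For $\Gamma\in\uuu^{(1)}_{\rm sep}$, Lemma~\ref{a4} forces $\{E(m),E(m')\}=\{B_{+},B_{-}\}$, so $\Gamma\in{\bf j}(m)\cap{\bf j}(m')$, contradicting the second part of Assumption~\ref{hyp2}. The disjoint closures yield disjoint $2\delta$-neighborhoods for $\delta$ small, hence disjoint supports.

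For $(iii)$, write $m=m_{i,j}$, $m'=m_{i',j'}$ with $i<i'$. Since $E(m')\subset X_{\sigma_{i'}}\subset X_{\sigma_{i}}$ is connected, it lies in a single component of $X_{\sigma_{i}}$. \emph{Subcase B} (different component from $E(m)$): then $E(m)\cap E(m')=\emptyset$, and since $E(m),E(m')$ are open with $\partial E(m)\subset\{f=\sigma_{i}\}$ and $\partial E(m')\subset\{f=\sigma_{i'}\}$ at different levels, $\overline{E(m)}\cap\overline{E(m')}=\emptyset$, and disjoint supports follow as in $(ii)$. \emph{Subcase A} ($E(m')\subset E(m)$): the saddles of ${\bf j}(m)$ lie on $\{f=\sigma_{i}\}$ at positive distance from $\overline{E(m')}\subset\{f\leq\sigma_{i'}\}$, so for $\delta$ small $\supp(\psi_{m'})\subset E(m')+B(0,2\delta)\subset E(m)$ avoids every $U_{\Gamma}$ for $\Gamma\in{\bf j}(m)$; the outside formula of \eqref{a40} then applies, and $\theta_{m}\equiv 1$ near $\overline{E(m)}$ gives $\psi_{m}=2e^{-(f-f(m))/h}$ on $\supp(\psi_{m'})$ (with the obvious adaptation $\psi_{\underline{m}}=e^{-(f-f(\underline{m}))/h}$ when $m=\underline{m}$). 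Finally $m'\in E(m)\setminus\{m\}$ and the first part of Assumption~\ref{hyp2} give $f(m')>f(m)$.

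The main technical obstacle is the case analysis in $(ii)$, which requires combining Proposition~\ref{a23}, Lemma~\ref{a4}, and Assumption~\ref{hyp2} to exclude shared critical boundary submanifolds. The parameters must be fixed in the order $\delta$ small first (for all neighborhood inclusions), then $\tau$ small as in Lemma~\ref{a37} (so that $v_{\Gamma}\equiv\pm 1$ in $V_{\Gamma}^{\pm}$).
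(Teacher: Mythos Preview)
Your argument is correct and follows the same line as the paper. For $(i)$ you reproduce exactly the paper's gluing argument via \eqref{a38} and Lemma~\ref{a37}; for $(ii)$ and $(iii)$ the paper merely invokes Assumption~\ref{hyp2} and refers the reader to \cite[Lemma~4.4]{LePMi20} for details, so your explicit case analysis---using Proposition~\ref{a23} and Lemma~\ref{a4} to rule out shared boundary critical submanifolds, and the nesting $\overline{E(m')}\subset\{f\leq\sigma_{i'}\}\subset X_{\sigma_i}$ to handle the distinct-level case---is precisely the content the paper outsources.
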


\begin{proof}
From \eqref{a38} and Lemma \ref{a37}, we have $v_{\Gamma} = 1$ near $\supp ( \theta_{m} ) \cap \partial U_{\Gamma}$. Thus, $\psi_{m}$ is smooth near $\partial U_{\Gamma}$ and eventually on $\R^{d}$. This proves $i)$. The two other points 
are a consequence of Assumption \ref{hyp2} and similar to Lemma 4.4 $iv)$, $v)$ of \cite{LePMi20}. We send the reader to this paper for the detailed proof.
\end{proof}

\begin{prop}\sl \label{a42}
Let Assumptions \ref{hyp0}, \ref{hyp1} and \ref{hyp2} hold and let $m \in \uuu^{( 0 )} \setminus \{ \underline{m} \}$. For $\tau_{0} , \delta_{0} > 0$ small enough, we have
\begin{equation*}
\begin{aligned}
&i) &&\Vert \psi_{m} \Vert^{2} \in \eee_{\rm cl} \Big( 4 ( \pi h )^{\frac{d - d_{m}}{2}} \int_{m} \big( \det \hess_{\perp} f ( s ) \big)^{- 1 / 2} d s \Big) , \\
&ii) &&\< \Delta_{f} \psi_{m} , \psi_{m} \> \in \sum_{\Gamma \in {\bf j}(m) } \eee_{\rm cl} \Big( \frac{4}{\pi^{2}} ( \pi h )^{\frac{d - d_{\Gamma}}{2} + 1} e^{- 2 ( \sigma - f( m ) ) / h} \int_{\Gamma} \vert \mu ( s ) \vert \big\vert \det \hess_{\perp} f ( s ) \big\vert^{- 1 / 2} d s \Big) ,  \\
&iii) &&\Vert \Delta_{f} \psi_{m} \Vert^{2} = \ooo ( h^{\infty} ) \< \Delta_{f} \psi_{m} , \psi_{m} \> ,
\end{aligned}
\end{equation*}
where $a_{h} = \eee_{\rm cl} ( b_{h} )$ means that there exists $c_{h}$ such that $a_{h} = b_{h} c_{h}$ for $h$ small enough and $c_{h}$ admits a classical expansion $c_{h} \sim \sum_{j \in \N} c_{j} h^{j}$ with $c_{0} = 1$.
\end{prop}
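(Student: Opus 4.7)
The plan is to exploit the supersymmetric identity $\langle \Delta_{f} \psi_{m} , \psi_{m} \rangle = \Vert d_{f} \psi_{m} \Vert^{2}$ with $d_{f}$ as in \eqref{NN}. Writing $\psi_{m} = w_{m} e^{-(f - f(m))/h}$ with the prefactor $w_{m}$ read off from Definition \ref{a39}, a direct computation yields $d_{f} \psi_{m} = h e^{-(f - f(m))/h} \nabla w_{m}$, so only regions where $w_{m}$ is non-constant contribute. These are of two types: $(a)$ $\{\nabla \theta_{m} \neq 0\}$, which by Assumption \ref{hyp2} and the construction of $\theta_m$ lies in $\{f \geq \bsigma(m) + c\}$ for some $c > 0$ and produces $\ooo(e^{-c/h})$ contributions relative to the main term; $(b)$ the saddle neighborhoods $U_{\Gamma}$ for $\Gamma \in {\bf j}(m)$, where in $\{\theta_{m} = 1\}$ one has $w_{m} = v_{\Gamma} + 1$ and $\nabla w_{m} = C_{\Gamma}^{-1} \zeta(\ell_{\Gamma}/\tau) e^{-\ell_{\Gamma}^{2}/2h} \nabla \ell_{\Gamma}$. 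For $i)$, I would note that $\psi_{m}$ coincides with $2 e^{-(f - f(m))/h}$ on a fixed neighborhood of $m$ (taking $\delta, \tau$ small enough so that $m$ is disjoint from the $U_{\Gamma}$'s) and is dominated by $2 e^{-(f - f(m))/h}$ elsewhere. Laplace's method around $m$ in the Morse--Bott coordinates of Lemma \ref{a36}, giving Gaussian integrals in the $d - d_{m}$ normal directions and ordinary integration along the $d_{m}$ tangential ones, produces $4 (\pi h)^{(d - d_{m})/2} \int_{m} \vert \det \hess_{\perp} f(s) \vert^{-1/2} ds$ together with the classical expansion $c_{h}$.

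For $ii)$, in each $U_{\Gamma}$ the integrand becomes $h^{2} C_{\Gamma}^{-2} \zeta(\ell_{\Gamma}/\tau)^{2} \vert \nabla \ell_{\Gamma} \vert^{2} e^{-(2f + \ell_{\Gamma}^{2} - 2f(m))/h}$. Lemma \ref{a8} gives $f + \ell_{0}^{2}/2 - \sigma = \phi_{+}$, so the leading exponent is $2\phi_{+}/h + 2(\sigma - f(m))/h$; the differences $\ell_{\Gamma}^{2} - \ell_{0}^{2}$ at higher orders in $h$ are absorbed into the classical expansion via Proposition \ref{a22}. Since $\phi_{+} \geq 0$ vanishes exactly on $\Gamma$ with positive-definite transversal Hessian, Laplace's method in tubular coordinates around $\Gamma$ yields at leading order $(\pi h)^{(d - d_{\Gamma})/2} \int_{\Gamma} \vert \nabla \ell_{0}(s) \vert^{2} \vert \det \hess_{\perp} \phi_{+}(s) \vert^{-1/2} ds$. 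By Lemma \ref{a12}, $\vert \nabla \ell_{0} \vert^{2} = 2 \vert \mu \vert$ on $\Gamma$ and $\vert \det \hess_{\perp} \phi_{+} \vert = \vert \det \hess_{\perp} f \vert$; multiplying by $h^{2} C_{\Gamma}^{-2} \sim 2h/\pi$ from \eqref{a47} and the prefactor $e^{-2(\sigma - f(m))/h}$ reproduces $\frac{4}{\pi^{2}}(\pi h)^{(d - d_{\Gamma})/2 + 1} e^{-2(\sigma - f(m))/h} \int_{\Gamma} \vert \mu(s) \vert \vert \det \hess_{\perp} f(s) \vert^{-1/2} ds$. The contributions from $\{\nabla \theta_{m} \neq 0\}$ carry an extra factor $\ooo(e^{-c/h})$ and are absorbed in $c_h$.

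For $iii)$, I apply Lemma \ref{a7} inside each $U_{\Gamma}$ on $\{\theta_{m} = 1\}$: $\Delta_{f} \psi_{m} = C_{\Gamma}^{-1}(w + r) e^{-(f + \ell_{\Gamma}^{2}/2 - f(m))/h}$, plus commutator terms arising from derivatives of $\theta_{m}$. By Proposition \ref{a22}, $w = \ooo(h^{\infty})$ uniformly on $U_{\Gamma}$; and $r$ is supported in $\{\vert \ell_{\Gamma}\vert \geq \tau\}$, a set on which $\phi_{+} \geq c \tau^{2} > 0$. Squaring and integrating, the $w$ part contributes $\ooo(h^{\infty}) (\pi h)^{(d - d_{\Gamma})/2} e^{-2(\sigma - f(m))/h}$ and the $r$ part $\ooo(e^{-c'/h}) e^{-2(\sigma - f(m))/h}$; the terms from $\{\nabla \theta_{m} \neq 0\}$ are exponentially smaller by the energy gap of Assumption \ref{hyp2}. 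Comparing with $ii)$, all these quantities are $\ooo(h^{\infty})$ relative to $\langle \Delta_{f} \psi_{m} , \psi_{m} \rangle$, which yields $iii)$.

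The main technical difficulty is the Laplace expansion in $ii)$: it requires a careful tubular-coordinate expansion of $\phi_{+}$ around $\Gamma$ that decouples tangential and normal variables, use of the determinant identification provided by Lemma \ref{a12}, and full control of the classical series $c_{h} \sim \sum_{j} c_{j} h^{j}$ with $c_{0} = 1$ rather than merely its leading term, in particular verifying that the $\zeta$-cut-off on $v_{\Gamma}$ produces only $\ooo(e^{-c/h})$ corrections relative to the Gaussian integral over the full normal bundle.
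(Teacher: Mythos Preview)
Your approach mirrors the paper's: the identity $\langle \Delta_f \psi_m, \psi_m\rangle = \Vert d_f\psi_m\Vert^2$ for $ii)$, Laplace-type expansions around $m$ and around each $\Gamma$ with phase $\phi_+ = f - \sigma + \ell_0^2/2$, the identifications of Lemma~\ref{a12}, and Lemma~\ref{a7} combined with Proposition~\ref{a22} for $iii)$.

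One statement needs correction. It is \emph{not} true that $\{\nabla\theta_m\neq 0\}\subset\{f\geq\bsigma(m)+c\}$: inside $U_\Gamma$ the cut-off $\theta_m$ transitions from $1$ to $0$ on the $B_-$ side of $\Gamma$, where $f<\sigma$. What the paper actually uses (and what you need) is that in $U_\Gamma$ one has $\nabla w_m=(v_\Gamma+1)\nabla\theta_m+\theta_m\nabla v_\Gamma$, and by Lemma~\ref{a37} together with the support condition \eqref{a38} one has $f>\sigma$ on $\supp\big((v_\Gamma+1)\nabla\theta_m\big)$, since $v_\Gamma+1=0$ on $V_\Gamma^-$. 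The same remark applies to the commutator term $[\Delta_f,\theta_m](v_\Gamma+1)e^{-(f-f(m))/h}$ in part $iii)$. With this correction your argument coincides with the paper's.
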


\begin{proof}
By Assumption \ref{hyp2}, $m$ is the unique minimal manifold of $f$ on $E ( m )$ and then on $\supp ( \theta_{m} )$ for $\delta > 0$ small enough. Then, \eqref{a40} shows that, for any small neighborhood $V$ of $m$, we have
\begin{equation*}
\Vert \psi_{m} \Vert^{2} = 4 \int_{V} e^{- 2 ( f ( x ) - f ( m ) ) / h} d x + \ooo ( e^{- c / h } ) ,
\end{equation*}
for some $c > 0$. Since the Hessian of $f$ is positive in the normal directions to $m$, we can apply a generalization of the Laplace method to the case of critical manifolds. More precisely, Hypothesis \ref{hyp1} and Theorem A.1 of \cite{Lu19_01} give
\begin{equation*}
\Vert \psi_{m} \Vert^{2} \in \eee_{\rm cl} \Big( 4 ( \pi h )^{\frac{d - d_{m}}{2}} \int_{m} \det \big( \hess_{\perp} f ( s ) \big)^{- 1 / 2} d s \Big) ,
\end{equation*}
and $i)$ follows.

Let $g_{m}$ be the smooth function equal to $\theta_{m} ( v_{\Gamma} + 1 )$ in $U_{\Gamma}$ and equal to $2 \theta_{m}$ near $\R^{d} \setminus \cup U_{\Gamma}$. Then, \eqref{NN} and \eqref{a40} give
\begin{equation*}
\< \Delta_{f} \psi_{m} , \psi_{m} \> = \Vert d_{f} \psi_{m} \Vert^{2} = \big\Vert d_{f} ( g_{m} e^{- ( f - f( m ) ) / h}) \big\Vert^{2} .
\end{equation*}
Since $d_f e^{- ( f - f( m ) ) / h} = 0$, this yields
\begin{equation} \label{a44}
\< \Delta_{f} \psi_{m} , \psi_{m} \> = h^{2} \int_{\R^{d}} \vert \nabla g_{m} ( x ) \vert^{2} e^{- 2 ( f ( x ) - f( m ) ) / h} d x .
\end{equation}
On $\R^{d} \setminus \cup U_{\Gamma}$, we have $f > \sigma$ on the support of $\nabla g_{m} = 2 \nabla \theta_{m}$ for $\delta$ small enough. On $U_{\Gamma}$, we can write
\begin{equation*}
\nabla g_{m} = ( v_{\Gamma} + 1 ) \nabla \theta_{m} + \theta_{m} \nabla v_{\Gamma} .
\end{equation*}
By the support properties of $\nabla \theta_{m}$ and Lemma \ref{a37} (see Figure \ref{f3}), we have $f > \sigma$ on $\supp ( ( v_{\Gamma} + 1 ) \nabla \theta_{m} )$. On the other hand, \eqref{a34} gives
\begin{equation*}
\theta_{m} \nabla v_{\Gamma} = C_{\Gamma}^{- 1} \theta_{m} \zeta ( \ell_{\Gamma}  / \tau ) e^{- \ell_{\Gamma}^{2} / 2 h} \nabla \ell_{\Gamma} .
\end{equation*}
Then, \eqref{a44} becomes
\begin{align}
\< \Delta_{f} \psi_{m} , \psi_{m} \> = \sum_{\Gamma \in {\bf j}(m) } C_{\Gamma}^{- 2} h^{2} \int_{U_{\Gamma}} \theta_{m}^{2} \zeta^{2} ( \ell_{\Gamma}  / \tau ) \vert \nabla \ell_{\Gamma} & \vert^{2} e^{- 2 ( f + \ell_{\Gamma}^{2} / 2 - f( m ) ) / h} d x  \nonumber \\
&\qquad \ + \ooo ( e^{- 2 ( \sigma - f( m ) ) / h - c / h} ) ,   \label{a45}
\end{align}
for some $c > 0$. Since $\ell_{\Gamma} \sim \sum_{j\geq 0} h^j \ell_{\Gamma,j}$ has an asymptotic expansion in powers of $h$, the phase factor can be decomposed as
\begin{equation}\label{pli}
e^{- 2 ( f + \ell_{\Gamma}^{2} / 2 - f( m ) ) / h} = e^{- 2 ( f + \ell_{\Gamma , 0}^{2} / 2 - f( m ) ) / h} \big( e^{- 2 \ell_{\Gamma , 0} \ell_{\Gamma , 1}} + \ooo ( h ) \big) ,
\end{equation}
where $\ell_{\Gamma , 0} = 0$ on $\Gamma$ and the remainder term $\ooo ( h )$ is a symbol. Note that $\phi_{\Gamma , +} = f + \ell_{\Gamma , 0}^{2} / 2$ from Lemma \ref{a8} where the Hessian of $\phi_{\Gamma , +}$ is positive in the normal directions to $\Gamma$. Then, we can apply the Laplace method to compute \eqref{a45}. Using \eqref{a14}, \eqref{a47}, \eqref{pli}, $\theta_{m} = \zeta ( \ell_{\Gamma}  / \tau ) = 1$, $f + \ell_{\Gamma,0}^{2} / 2 = \sigma$ on $\Gamma$ and $e^{- 2 \ell_{\Gamma , 0} \ell_{\Gamma , 1}} = 1$ on $\Gamma$ to compute all the coefficients, Theorem A.1 of \cite{Lu19_01} yields
\begin{equation*}
\< \Delta_{f} \psi_{m} , \psi_{m} \> \in \sum_{\Gamma \in {\bf j}(m) } \eee_{\rm cl} \Big( \frac{4}{\pi^{2}} ( \pi h )^{\frac{d - d_{\Gamma}}{2} + 1} e^{- 2 ( \sigma - f( m ) ) / h} \int_{\Gamma} \vert \mu ( s ) \vert \big\vert \det \hess_{\perp} f ( s ) \big\vert^{- 1 / 2} d s \Big) ,
\end{equation*}
and $ii)$ follows.

We now estimate $\Vert \Delta_{f} \psi_{m} \Vert^{2}$. Near $\R^{d} \setminus \cup U_{\Gamma}$, \eqref{a40} gives $\Delta_{f} \psi_{m} = 2 [ \Delta_{f} , \theta_{m} ] e^{- ( f - f ( m ) ) / h}$ since $\Delta_{f} e^{- ( f - f ( m ) ) / h} = 0$. Using $f > \sigma$ on $\supp ( \nabla \theta_{m} )$, we deduce
\begin{equation} \label{a51}
\Vert \Delta_{f} \psi_{m} \Vert_{L^{2} ( \R^{d} \setminus \cup U_{\Gamma} )}^{2} = \ooo \big( e^{- 2 ( \sigma - f( m ) ) / h - c / h} \big) ,
\end{equation}
for some $c > 0$. On $U_{\Gamma}$ with $\Gamma \in {\bf j}(m) $, we can write
\begin{align}
\Delta_{f} \psi_{m} &= \Delta_{f} \theta_{m} ( v_{\Gamma} + 1 ) e^{- ( f - f ( m ) ) / h}  \nonumber \\
&= \theta_{m} \Delta_{f} ( v_{\Gamma} + 1 ) e^{- ( f - f ( m ) ) / h} + [ \Delta_{f} , \theta_{m} ] ( v_{\Gamma} + 1 ) e^{- ( f - f ( m ) ) / h}  \nonumber \\
&= \theta_{m} \Delta_{f} v_{\Gamma} e^{- ( f - f ( m ) ) / h} + [ \Delta_{f} , \theta_{m} ] ( v_{\Gamma} + 1 ) e^{- ( f - f ( m ) ) / h}  \label{a46}
\end{align}
The choice of $\theta_{m}$ and Lemma \ref{a37} (see Figure \ref{f3}) imply that $f > \sigma$ on $\supp ( v_{\Gamma} + 1 ) \cap \supp ( \nabla \theta_{m} )$. Thus, the last term of \eqref{a46} satisfies
\begin{equation} \label{a48}
[ \Delta_{f} , \theta_{m} ] ( v_{\Gamma} + 1 ) e^{- ( f - f ( m ) ) / h} = \ooo \big( e^{- 2 ( \sigma - f( m ) ) / h - c / h} \big) ,
\end{equation}
for some $c > 0$. On the other hand, Lemma \ref{a7} and Proposition \ref{a22} show that
\begin{equation} \label{a49}
\theta_{m} \Delta_{f} v_{\Gamma} e^{- ( f - f ( m ) ) / h} = \theta_{m} ( w + r ) e^{- ( f + \ell_{\Gamma}^{2} / 2 - f( m ) ) / h} ,
\end{equation}
with $r = \ooo ( 1 )$, $\supp ( r ) \subset \{ \vert \ell_{\Gamma} \vert \geq \tau \}$ and $w = \ooo ( h^{\infty} )$. On $\supp ( \theta_{m} r )$, Lemma \ref{a8} gives $f + \ell_{\Gamma}^{2} / 2 = \phi_{\Gamma , +} > \sigma$ since the Hessian of $\phi_{\Gamma , +}$ is positive in the normal directions to $\Gamma$. Then, we obtain
\begin{equation*}
\big\Vert \theta_{m} r e^{- ( f + \ell_{\Gamma}^{2} / 2 - f( m ) ) / h} \big\Vert = \ooo \big( e^{- ( \sigma - f( m ) ) / h - c / h} \big) ,
\end{equation*}
for some $c > 0$. Concerning $w$, we can only deduce from Lemma \ref{a8} that $f + \ell_{\Gamma}^{2} / 2 = \phi_{\Gamma , +} \geq \sigma$ on $\supp ( \theta_{m} w )$. Since $w = \ooo ( h^{\infty} )$, we get
\begin{equation*}
\big\Vert \theta_{m} w e^{- ( f + \ell_{\Gamma}^{2} / 2 - f( m ) ) / h} \big\Vert = \ooo \big( h^{\infty} e^{- ( \sigma - f( m ) ) / h} \big) ,
\end{equation*}
Combining \eqref{a49} with the two last inequalities, it comes
\begin{equation} \label{a50}
\theta_{m} \Delta_{f} v_{\Gamma} e^{- ( f - f ( m ) ) / h} = \ooo \big( h^{\infty} e^{- ( \sigma - f( m ) ) / h} \big) .
\end{equation}
Then, \eqref{a46}, \eqref{a48} and \eqref{a50} give 
\begin{equation} \label{a52}
\Vert \Delta_{f} \psi_{m} \Vert_{L^{2} ( U_{\Gamma} )}^{2} = \ooo \big( h^{\infty} e^{- 2 ( \sigma - f( m ) ) / h} \big) .
\end{equation}
Eventually, $iii)$ follows from \eqref{a51}, \eqref{a52} and $ii)$.
\end{proof}

\section{Proof of Theorem \ref{e12}}

Recall that, for $m \in \mathcal{U}^{( 0 )}$, $S ( m ) : = \bsigma ( m ) - f ( m )$ where $\bsigma$ is defined in \eqref{e11}. From now, one labels the minimal submanifolds $m_{1} , \ldots , m_{n_{0}} \in \mathcal{U}^{( 0 )}$ of $f$ so that $( S ( m_{j} ) )_{j \in \{ 1 , \ldots , n_{0} \}}$ is non-decreasing, i.e., 
\begin{equation*}
S(m_{n_{0}}) = + \infty \qquad \text{and} \qquad \forall j \in \{ 2 , \ldots , n_{0} - 1 \} , \qquad S ( m_{j} ) \geq S ( m_{j - 1} ) .
\end{equation*}
For $j \in \{ 1, \ldots , n_{0} \}$, we set
\begin{equation}
S_{j} : = S ( m_{j} ) , \qquad \varphi_{j} : = \frac{\psi_{m_{j}}}{\Vert \psi_{m_{j}} \Vert_{L^{2}}} \qquad \text{and} \qquad \mu_{j} : = \< \Delta_{f} \varphi_{j} , \varphi_{j} \> .
\end{equation}
Let $\eta_{0} > 0$ be as in Theorem \ref{e14} and introduce the spectral projection
\begin{equation*}
\Pi_{h} : =  \frac{1}{2 i \pi} \int_{\partial D ( 0 , \frac{\eta_{0}}{2} h^{2} )} ( z - \Delta_{f} )^{- 1} d z .
\end{equation*}
For $j \in \{ 1 , \ldots , n_{0} \}$ and $h > 0$ small enough, we set $v_{j} : = \Pi_{h} \varphi_{j}$.

According to Proposition \ref{a42}, we have for any $j \in \{ 1 , \ldots , n_{0} \}$
\begin{equation}\label{e17}
\Vert \Delta_{f} \varphi_{j} \Vert = \mathcal{O} \Big( h^{\infty} \sqrt{\< \Delta_{f} \varphi_{j} , \varphi_{j} \>} \Big) = \mathcal{O} ( h^{\infty} \sqrt{\mu_{j}} ) .
\end{equation}
On the other hand, using Lemma \ref{a41} and proceeding in the same way as in the proofs of \cite[Proposition 5.1 (i)]{BoLePMi22} and \cite[Lemma 4.7]{LePMi20} respectively, we also have for any $j , k \in \{ 1 , \ldots , n_{0} \}$
\begin{equation}\label{e16}
\< \varphi_{j} , \varphi_{k} \> = \delta_{j , k} + \mathcal{O} ( e^{- c / h} ) \qquad \text{and} \qquad \< \Delta_{f} \varphi_{j} , \varphi_{k} \> = \delta_{j , k} \mu_{j} ,
\end{equation}
for some constant $c > 0$, uniformly for $h > 0$ small enough.

Writing 
\begin{equation*}
( 1 - \Pi_{h} ) \varphi_{j} = - \frac{1}{2 i \pi} \int_{\partial D ( 0 , \frac{\eta_{0}}{2} h^{2} )} z^{- 1} ( z - \Delta_{f} )^{- 1} \Delta_{f} \varphi_{j} \, d z ,
\end{equation*}
and using \eqref{e17} together with $\Vert ( z - \Delta_{f} )^{- 1} \Vert = \mathcal{O} ( h^{- 2} )$ for any $z \in \partial D ( 0 , \frac{\eta_{0}}{2} h^{2} )$, we get
\begin{equation}\label{e18}
( 1 - \Pi_{h} ) \varphi_{j} = \mathcal{O} ( h^{\infty} \sqrt{\mu_{j}} ) .
\end{equation}
Combining estimates \eqref{e17}, \eqref{e16} and \eqref{e18}, we obtain the following proposition (see for instance \cite[Proposition 4.10]{LePMi20} for the proof).

\begin{prop}\sl
There exists a constant $c > 0$ such that, for all $j , k \in \{ 1 , \ldots , n_{0} \}$,
\begin{equation}\label{e19}
\< v_{j} , v_{k} \> = \delta_{j , k} + \mathcal{O} ( e^{- c / h} ) ,
\end{equation}
and 
\begin{equation}\label{e20}
\< \Delta_{f} v_{j} , v_{k} \> = \delta_{j , k} \mu_{j} + \mathcal{O} ( h^{\infty} \sqrt{\mu_{j} \mu_{k} } ) .
\end{equation}
In particular, $\{ v_{1} , \ldots , v_{n_{0}} \}$ is a basis of $\ran \Pi_{h}$ for $h > 0$ small enough.
\end{prop}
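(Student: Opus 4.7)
The plan is to write each $v_{j}$ as $\varphi_{j} - ( 1 - \Pi_{h} ) \varphi_{j}$, expand the two sesquilinear forms and control the correction terms using the three ingredients already established: the quasi-orthogonality and diagonality \eqref{e16}, the smallness \eqref{e17} of $\Delta_{f} \varphi_{j}$ and the bound \eqref{e18} on $( 1 - \Pi_{h} ) \varphi_{j}$. The crucial algebraic observation is that $\Pi_{h}$ commutes with $\Delta_{f}$ (since it is a spectral projector of $\Delta_{f}$), so that $\Delta_{f} ( 1 - \Pi_{h} ) = ( 1 - \Pi_{h} ) \Delta_{f}$; this lets one ``throw'' $\Delta_{f}$ off the projector and onto the quasimode where \eqref{e17} applies.

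For \eqref{e19}, expanding $\langle v_{j} , v_{k} \rangle$ in terms of $\varphi_{j} , \varphi_{k}$ and $( 1 - \Pi_{h} ) \varphi_{j} , ( 1 - \Pi_{h} ) \varphi_{k}$ produces one main contribution $\langle \varphi_{j} , \varphi_{k} \rangle = \delta_{j , k} + \mathcal{O} ( e^{- c / h} )$ from \eqref{e16}, and three cross terms to which Cauchy--Schwarz together with \eqref{e18} assigns a size $\mathcal{O} ( h^{\infty} \sqrt{\mu_{j}} ) + \mathcal{O} ( h^{\infty} \sqrt{\mu_{k}} ) + \mathcal{O} ( h^{\infty} \sqrt{\mu_{j} \mu_{k}} )$. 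Since each $\mu_{\ell}$ is uniformly bounded in $h$ by Proposition \ref{a42}, the cross terms are $\mathcal{O} ( h^{\infty} )$ and hence a fortiori $\mathcal{O} ( e^{- c / h} )$, possibly after shrinking $c$.

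For \eqref{e20}, the same expansion yields
\begin{equation*}
\langle \Delta_{f} v_{j} , v_{k} \rangle = \langle \Delta_{f} \varphi_{j} , \varphi_{k} \rangle - \langle \Delta_{f} ( 1 - \Pi_{h} ) \varphi_{j} , \varphi_{k} \rangle - \langle \Delta_{f} \varphi_{j} , ( 1 - \Pi_{h} ) \varphi_{k} \rangle + \langle \Delta_{f} ( 1 - \Pi_{h} ) \varphi_{j} , ( 1 - \Pi_{h} ) \varphi_{k} \rangle .
\end{equation*}
The first term is exactly $\delta_{j , k} \mu_{j}$ by \eqref{e16}. For the others, using self-adjointness and the commutation with $\Pi_{h}$, each off-diagonal pairing reduces to an inner product of the form $\langle \Delta_{f} \varphi_{\bullet} , ( 1 - \Pi_{h} ) \varphi_{\bullet} \rangle$ or $\langle ( 1 - \Pi_{h} ) \Delta_{f} \varphi_{j} , ( 1 - \Pi_{h} ) \varphi_{k} \rangle$. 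Cauchy--Schwarz combined with \eqref{e17} and \eqref{e18} then bounds each of these by $\mathcal{O} ( h^{\infty} \sqrt{\mu_{j} \mu_{k}} )$, which is precisely the error claimed in \eqref{e20}.

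The basis statement follows by dimension counting. Theorem \ref{e14} gives $\dim \operatorname{Ran} \Pi_{h} = n_{0}$ for $h$ small, and \eqref{e19} shows that the Gram matrix of $( v_{1} , \ldots , v_{n_{0}} )$ equals $\operatorname{Id} + \mathcal{O} ( e^{- c / h} )$, which is invertible for $h$ small enough; hence the $v_{j}$'s are linearly independent and thus form a basis of $\operatorname{Ran} \Pi_{h}$. The main step to be careful with is the handling of the last cross term in the expansion of $\langle \Delta_{f} v_{j} , v_{k} \rangle$, where one must use the commutation $\Delta_{f} ( 1 - \Pi_{h} ) = ( 1 - \Pi_{h} ) \Delta_{f}$ so that $\Delta_{f}$ hits a $\varphi_{\bullet}$ and \eqref{e17} delivers the $h^{\infty}$ factor; the remainder of the argument is essentially a bookkeeping of exponentially small and rapidly decaying factors.
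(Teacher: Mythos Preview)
Your approach is the standard one and matches what the paper has in mind (it cites \cite[Proposition 4.10]{LePMi20} rather than spelling out the argument). The decomposition $v_{j} = \varphi_{j} - (1-\Pi_{h})\varphi_{j}$, the use of the commutation $\Delta_{f}(1-\Pi_{h}) = (1-\Pi_{h})\Delta_{f}$, and the Cauchy--Schwarz bounds via \eqref{e17}, \eqref{e18} are exactly right, and your treatment of \eqref{e20} and of the basis statement is correct.

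There is, however, one genuine slip in your argument for \eqref{e19}. You say that the cross terms are $\mathcal{O}(h^{\infty})$ ``and hence a fortiori $\mathcal{O}(e^{-c/h})$''. This implication is false: $\mathcal{O}(h^{\infty})$ means faster than any \emph{polynomial} decay, which is strictly weaker than exponential decay $e^{-c/h}$. The correct justification is that each $\mu_{\ell}$ is itself exponentially small: by Proposition \ref{a42} $ii)$ one has $\mu_{\ell} = \mathcal{O}(e^{-2S_{\ell}/h})$ for $\ell \neq n_{0}$ (and $\mu_{n_{0}} = 0$), so $\sqrt{\mu_{\ell}} = \mathcal{O}(e^{-c'/h})$ for some $c' > 0$. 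Hence the cross terms $\mathcal{O}(h^{\infty}\sqrt{\mu_{j}})$, $\mathcal{O}(h^{\infty}\sqrt{\mu_{k}})$, $\mathcal{O}(h^{\infty}\sqrt{\mu_{j}\mu_{k}})$ are all $\mathcal{O}(e^{-c'/h})$, and \eqref{e19} follows after taking $c$ to be the minimum of $c'$ and the constant coming from \eqref{e16}. With this one correction your proof is complete.
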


Relying on the above result, the rest of the proof is standard. We recall only the main steps referring for instance to \cite[Proposition 4.12]{LePMi20} for the details. Starting from the basis $\{ v_{n_{0} - j + 1} \}_{j \in \{ 1 , \ldots , n_{0} \}}$ we obtain an orthonormal basis $\{ e_{n_{0} - j + 1} \}_{j \in \{ 1 , \ldots , n_{0} \}}$ of $\ran \Pi_{h}$ using the Gram--Schmidt process. Thanks to \eqref{e19}, this new basis satisfies for any $j \in \{ 1 , \ldots , n_{0} \}$,
\begin{equation*}
e_{n_{0} - j + 1} = v_{n_{0} - j + 1} + \mathcal{O} ( e^{- c / h} ) .
\end{equation*}
Now, it follows from the above labeling and \eqref{e20} that, for any $j , k \in \{1 , \ldots , n_{0} \}$,
\begin{equation*}
\< \Delta_{f} e_{n_{0} - j + 1} , e_{n_{0} - k + 1} \> = \delta_{j , k} \mu_{n_{0} - j + 1} + \mathcal{O} ( h^{\infty} \sqrt{\mu_{n_{0} - j + 1} \mu_{n_{0} - k + 1}} ) .
\end{equation*}
Hence the matrix $M_{h}$ of ${\Delta_{f}}_{\vert \ran \Pi_{h}}$ in the basis $\{ e_{n_{0} - j + 1} \}_{j \in \{ 1 , \ldots , n_{0} \}}$ takes the form 
\begin{equation*}
M_{h} = \diag \big( ( \sqrt{\mu_{n_{0} - j + 1}} )_{1 \leq j \leq n_{0}} \big) ( I_{n_{0}} + \mathcal{O} ( h^{\infty} ) ) \diag \big( ( \sqrt{\mu_{n_{0} - j + 1}} )_{1 \leq j \leq n_{0}} \big) .
\end{equation*}
The spectrum of such a matrix can be computed using the Fan inequalities (see \cite{GoKr71_01, Si79}) or Lemma \ref{e21} below. From these results, the eigenvalues $\lambda_{j} ( h )$ of $M_{h}$, that are the small eigenvalues of $\Delta_{f}$, satisfy  
\begin{equation}\label{e22}
\lambda_{j} ( h ) = \mu_{n_{0} - j + 1} ( 1 + \mathcal{O} ( h^{\infty} ) ) = \< \Delta_{f} \varphi_{n_{0} - j + 1} , \varphi_{n_{0} - j + 1} \> ( 1 + \mathcal{O} ( h^{\infty} ) ) .
\end{equation}
Eventually, the announced result follows from $i)$ and $ii)$ of Proposition \ref{a42}.

\begin{lemma}\sl \label{e21}
Let $M = M ( h ) $ be a $n \times n$ matrix of the form
\begin{equation*}
M = D ( I_n + \ooo ( h^{\infty} ) ) D ,
\end{equation*}
for some diagonal matrix $D ( h ) = \diag ( \nu _{j} ( h ) )$ with $\nu_{j} ( h ) \in \C$. Then, the eigenvalues of $M$ are of the form $\nu_{j}^{2} ( h ) ( 1 + \ooo ( h^{\infty} ) )$.
\end{lemma}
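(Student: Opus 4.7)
The plan is to reduce to the case of invertible $D$, conjugate $M$ into a form close to diagonal, and combine Gershgorin's theorem with a continuity argument to pair eigenvalues and centers with correct multiplicities.

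First, if some $\nu_j$ vanish, the corresponding rows and columns of $M = D(I_n+E)D$ are identically zero (since $M_{jk}=\nu_j(\delta_{jk}+E_{jk})\nu_k$), so $M$ is block diagonal after permutation, with one zero block contributing exactly the correct number of zero eigenvalues. I may therefore assume $\nu_j\neq 0$ for all $j$, so $D$ is invertible, and the similarity $DMD^{-1}=D^2(I_n+E)$ shows that $M$ and $D^2(I_n+E)$ share their spectrum. The entries
\[
(D^2(I_n+E))_{jk} = \nu_j^2\,\delta_{jk} + \nu_j^2 E_{jk}
\]
give a matrix whose $j$-th row has diagonal $\nu_j^2(1+\mathcal O(h^\infty))$ and off-diagonal row-sum $\mathcal O(h^\infty)|\nu_j|^2$. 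Gershgorin's theorem then places every eigenvalue $\lambda$ of $M$ in some disk
\[
D_j^N:=\bigl\{z\in\mathbb{C}:|z-\nu_j^2|\leq C_N h^N|\nu_j|^2\bigr\},
\]
for any fixed $N\in\mathbb N$ and $h$ small enough; in particular each eigenvalue is of the form $\nu_j^2(1+\mathcal O(h^N))$ for some index $j$.

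To match eigenvalues with centers with the correct multiplicities, I would use the continuous deformation $B_\epsilon := D^2(I_n+\epsilon E)$, $\epsilon\in[0,1]$. The Gershgorin bound above is uniform in $\epsilon\in[0,1]$, so the spectrum of $B_\epsilon$ lies in $\bigcup_j D_j^N$ throughout the homotopy. Since the eigenvalues of $B_\epsilon$ depend continuously on $\epsilon$ and $B_0=D^2$ places each $\nu_j^2$ inside $D_j^N$ with the correct multiplicity, the number of eigenvalues of $B_\epsilon$ in each connected component of $\bigcup_j D_j^N$ is conserved along the path. At $\epsilon=1$, any connected component $K$ containing the centers $\{\nu_j^2\}_{j\in I}$ thus contains exactly $|I|$ eigenvalues of $M$ (counted with multiplicity). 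Chaining across successive disk intersections inside $K$ yields $|\nu_j^2-\nu_k^2|=\mathcal O(h^N)|\nu_j|^2$ for all $j,k\in I$, so any eigenvalue $\lambda\in K$ satisfies $\lambda = \nu_j^2(1+\mathcal O(h^N))$ for every $j\in I$. A bijection between the eigenvalues of $M$ and the indices $j$ is then obtained by assigning eigenvalues to centers arbitrarily inside each component.

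The only mildly delicate point is the multiplicity-matching step, since the connected components of $\bigcup_j D_j^N$ depend on $N$ and $h$. This is harmless because the disks $D_j^N$ only shrink as $N$ grows, so components can only split further; any arbitrariness of the labeling inside a single component is irrelevant since the centers in a common component are already within $\mathcal O(h^\infty)$-relative distance of each other, yielding the stated asymptotic $\lambda = \nu_j^2(1+\mathcal O(h^\infty))$.
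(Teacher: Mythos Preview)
Your argument is correct. The reduction to invertible $D$ is the same as in the paper; from there you diverge. The paper renormalizes the characteristic polynomial to $p(z)=\det(D^{-1}(M-z)D^{-1})=\det(\diag(1-z\nu_j^{-2})+\ooo(h^\infty))$, splits it as $f(z)+g(z)$ with $f(z)=\prod_j(1-z\nu_j^{-2})$, and shows $|g|<|f|$ outside $\bigcup_j B(\nu_j^2,h^K|\nu_j|^2)$, so Rouch\'e's theorem directly gives both the localization and the multiplicity count in each connected component of the union of disks. Your route obtains the same disks via Gershgorin applied to $D^2(I_n+E)$ and then recovers the multiplicity count by the homotopy $B_\epsilon=D^2(I_n+\epsilon E)$ and continuity of eigenvalues. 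The two proofs thus coincide on the geometric picture (eigenvalues trapped in relative disks of radius $\ooo(h^\infty)$), but the paper's Rouch\'e step handles localization and counting in one stroke, while your Gershgorin approach needs the extra deformation argument to match multiplicities. Your chaining estimate across overlapping disks (at most $n$ steps, with comparable $|\nu_j|^2$ along the chain) is fine and makes the final bijection legitimate.
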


\begin{proof}
Without loss of generality, we can assume that $\nu_{j} \neq 0$ for all $j \in \{ 1 , \ldots , n \}$ (we simply remove the lines and columns of zeros if some of the $\nu_{j}$'s vanish). The eigenvalues of $M$ are the zeros of the renormalized characteristic polynomial
\begin{align}
p(z) &= \det(D)^{-2} \det ( M - z ) = \det ( D^{- 1} ( M - z ) D^{- 1} )  \nonumber \\
&= \det \big( \diag ( 1 - z \nu_{j}^{-2} ) + \ooo ( h^{\infty} ) \big) .
\end{align}
The usual expansion formula for the determinant using permutations allows to write $p ( z ) = f ( z ) + g ( z )$ where $f ( z ) = \prod_{j \in \{ 1 , \ldots , n \}} ( 1 - z \nu_{j}^{-2} )$ and $g ( z )$ is a finite sum of terms of the form $\ooo ( h^{\infty} ) \prod_{j \in J} ( 1 - z \nu_{j}^{-2} )$ with $J \varsubsetneq \{ 1 , \ldots , n \}$. For $K > 0$, we have $\vert 1 - z \nu_{j}^{-2} \vert \geq h^{K}$ for $z \notin B ( \nu_{j}^{2} , h^{K} \vert \nu_{j} \vert^{2} )$. It follows that
\begin{equation*}
\forall z \in \C \setminus \cup_{j} B ( \nu_{j}^{2} , h^{K} \vert \nu_{j} \vert^{2} ) , \qquad \vert g ( z ) \vert < \vert f ( z ) \vert ,
\end{equation*}
for $h$ small enough. Letting $K$ goes to $+ \infty$, the Rouch\'e Theorem implies the Lemma.
\end{proof}

\noindent
\textbf{Acknowledgement}\\
This work was  supported by the ANR-19-CE40-0010, Analyse Quantitative de Processus M\'etastables (QuAMProcs). The first author acknowledges the financial support of the project 042133MR--POSTDOC of the Universidad de Santiago de Chile.

\bibliographystyle{amsplain}


\end{document}